\newtheorem{theorem}{Theorem}
\newtheorem{lemma}[theorem]{Lemma}
\newtheorem{proposition}[theorem]{Proposition}
\newtheorem{corollary}[theorem]{Corollary}
\theoremstyle{definition}
\title{\textbf{The Helmholtz decomposition of a space of vector fields with bounded mean oscillation in a bounded domain}}
\author{Yoshikazu Giga \\
The University of Tokyo \\
labgiga@ms.u-tokyo.ac.jp
\and
Zhongyang Gu \\
The University of Tokyo \\
zgu@ms.u-tokyo.ac.jp}
\begin{document}
\date{}

\maketitle
\begin{abstract}
We introduce a space of vector fields with bounded mean oscillation whose ``tangential'' and ``normal'' components to the boundary behave differently.
We establish its Helmholtz decomposition when the domain is bounded.
This substantially extends the authors' earlier result for a half space.
\end{abstract}
%
\begin{center}
Keywords: Helmholtz decomposition, $BMO$ space, Neumann problem, normal component.
\end{center}

%
\section{Introduction} 
\label{intro}
The Helmholtz decomposition of a vector field is a fundamental tool to analyze the Stokes and the Navier-Stokes equations. It is formally a decomposition of a vector field $v=(v^1,\ldots,v^n)$ in a domain $\Omega$ of $\mathbf{R}^n$ into
\begin{align} \label{H}
	v = v_{0} + \nabla q;
\end{align}
here $v_{0}$ is a divergence free vector field satisfying supplemental conditions like boundary condition and $\nabla q$ denotes the gradient of a function (scalar field) $q$. If $v$ is in $L^{p}$ ($1<p<\infty$) in $\Omega$, such a decomposition is well-studied.
 For example, a topological direct sum decomposition
\[
	\left( L^p(\Omega) \right)^n = L^p_\sigma(\Omega) \oplus G^p(\Omega)
\]
holds for various domains including $\Omega=\mathbf{R}^n$, a half space $\mathbf{R}^n_+$, a bounded smooth domain \cite{FM};
 see e.g.\ G.\ P.\ Galdi \cite{Ga}.
 Here, $L^p_\sigma(\Omega)$ denotes the $L^p$-closure of the space of all div-free vector fields compactly supported in $\Omega$ and $G^p(\Omega)$ denotes the totality of $L^p$ gradient fields.
 It is impossible to extend this Helmholtz decomposition to $L^\infty$ even if $\Omega=\mathbf{R}^n$ since the projection $v\mapsto\nabla q$ is a composite of the Riesz operators which is not bounded in $L^\infty$.
 We have to replace $L^\infty$ with a class of functions of bounded mean oscillation.
 However, if the vector field is of bounded mean oscillation ($BMO$ for short), such a problem is only studied when $\Omega$ is a half space $\mathbf{R}^n_+$ \cite{GigaGu}, where the boundary is flat.

Our goal is to establish the Helmholtz decomposition of $BMO$ vector fields in a smooth bounded domain in $\mathbf{R}^n$, which is a typical example of a domain with curved boundary.
 Although the space of $BMO$ functions in $\mathbf{R}^n$ is well studied, the situation is less clear when one considers such a space in a domain, because there are several possible definitions.
 One should be careful about the behavior of a function near the boundary $\Gamma = \partial\Omega$.
 In this paper we study a space of $BMO$ vector fields introduced in \cite{GigaGu2} and establish its Helmholtz decomposition when $\Omega$ is a bounded $C^3$ domain.

Let us recall the space $vBMO(\Omega)$ introduced in \cite{GigaGu2}.
 We first recall the $BMO$ seminorm for $\mu\in(0,\infty]$.
 For a locally integrable function $f$, i.e., $f \in L^1_\mathrm{loc}(\Omega)$ we define
\[
	[f]_{BMO^\mu(\Omega)} := \sup \left\{ \frac{1}{\left|B_r(x)\right|} \int_{B_r(x)} \left| f(y) - f_{B_r(x)} \right| \, dy \biggm|
	B_r(x) \subset \Omega,\ r < \mu \right\},
\]
where $f_B$ denotes the average over $B$, i.e.,
\[
	f_B := \frac{1}{|B|} \int_B f(y) \, dy
\]
and $B_r(x)$ denotes the closed ball of radius $r$ centered at $x$ and $|B|$ denotes the Lebesgue measure of $B$.
 The space $BMO^\mu(\Omega)$ is defined as
\[
	BMO^\mu(\Omega) := \left\{ f \in L^1_\mathrm{loc}(\Omega) \bigm|
	[f]_{BMO^\mu} < \infty \right\}.
\]
This space may not agree with the space of restrictions $r_\Omega f$ of $f \in BMO^\mu(\mathbf{R}^n)$.
 As in \cite{BG}, \cite{BGMST}, \cite{BGS}, \cite{BGST} we introduce a seminorm controlling the boundary behavior.
 For $\nu \in (0,\infty]$, we set
\[
	[f]_{b^\nu} := \sup \left\{ r^{-n} \int_{\Omega\cap B_r(x)} \left| f(y) \right| \, dy \biggm|
	x \in \Gamma,\ 0<r<\nu \right\}.
\]
In these papers, the space
\[
	BMO^{\mu,\nu}_b(\Omega) := \left\{ f \in BMO^\mu(\Omega) \bigm|
	[f]_{b^\nu} < \infty \right\}
\]
is considered.
 Note that this space $BMO^{\infty,\infty}_b(\Omega)$ is identified with Miyachi's $BMO$ introduced by \cite{Mi} if $\Omega$ is a bounded Lipschitz domain or a Lipschitz half space as proved in \cite{BGST}.
 However, unfortunately, it turns out such a boundary control for whole components of vector fields is too strict to have the Helmholtz decomposition.
 We separate tangential and normal components.
 Let $d_\Gamma(x)$ denote the distance from the boundary $\Gamma$, i.e., 
\[
	d_\Gamma(x) := \inf \left\{ |x - y|,\ y \in \Gamma \right\}.
\]
For vector fields, we consider
\[
	vBMO^{\mu,\nu}(\Omega) := \left\{ v \in \left(BMO^\mu(\Omega)\right)^n \bigm|
	[\nabla d_\Gamma \cdot v]_{b^\nu} < \infty \right\},
\]
where $\, \cdot \,$ denotes the standard inner product in $\mathbf{R}^n$.
The quantity $(\nabla d_\Gamma \cdot v)\nabla d_\Gamma$ on $\Gamma$ is the component of $v$ normal to the boundary $\Gamma$.
 We set
\[
	[v]_{vBMO^{\mu,\nu}(\Omega)} := [v]_{BMO^\mu(\Omega)} + [\nabla d_\Gamma \cdot v]_{b^\nu}.
\]
If $\Omega$ is the half space, this is not a norm but a seminorm.
 However, if it has a fully curved part in the sense of \cite[Definition 7]{GigaGu2}, then this becomes a norm \cite[Lemma 8]{GigaGu2}.
 In particular, when $\Omega$ is a bounded $C^2$ domain, this is a norm. 
 Roughly speaking, the boundary behavior of a vector field $v$ is controlled for only normal part of $v$ if $v \in vBMO^{\mu,\nu}(\Omega)$.
 For a bounded domain, this norm is equivalent no matter how $\mu$ and $\nu$ are taken; 
in other words, $vBMO^{\mu,\nu}(\Omega)=vBMO^{\infty,\infty}(\Omega)$.
 This is because $vBMO^{\mu,\nu}(\Omega)\subset L^1(\Omega)$ when $\Omega$ is bounded, which follows from the characterization of $vBMO^{\mu,\nu}(\Omega)$ in \cite[Theorem 9]{GigaGu2}.
 We shall simply write $vBMO^{\mu,\nu}(\Omega)$ as $vBMO(\Omega)$.
 We are now in a position to state our main result.
\begin{theorem} \label{M}
Let $\Omega$ be a bounded $C^3$ domain in $\mathbf{R}^n$.
 Then the topological direct sum decomposition
\begin{equation} \label{HD}
	vBMO(\Omega) = vBMO_\sigma(\Omega) \oplus 
	GvBMO(\Omega)
\end{equation}
holds with
\begin{align*}
	vBMO_\sigma(\Omega) &:= \left\{ v \in vBMO(\Omega) \bigm|
	\operatorname{div}v = 0\ \text{in}\ \Omega,\ 
	v \cdot \mathbf{n} = 0\ \text{on}\ \Gamma \right\}, \\
	GvBMO(\Omega) &:= \left\{ \nabla q \in vBMO(\Omega) \bigm|
	q \in L^1_\mathrm{loc}(\Omega) \right\},
\end{align*}
where $\mathbf{n}$ denotes the exterior unit normal vector field.
 In other words, for $v\in vBMO(\Omega)$, there is unique $v_0\in vBMO_\sigma(\Omega)$ and $\nabla q\in GvBMO(\Omega)$ satisfying $v=v_0+\nabla q$.
 Moreover, the mapping $v\mapsto v_0$, $v\mapsto\nabla q$ is bounded in $vBMO(\Omega)$.
\end{theorem}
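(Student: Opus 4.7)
The strategy is the classical one: given $v \in vBMO(\Omega)$, construct $q$ by solving the Neumann problem
\[
	\Delta q = \operatorname{div} v \quad \text{in } \Omega, \qquad \mathbf{n} \cdot \nabla q = v \cdot \mathbf{n} \quad \text{on } \Gamma,
\]
and set $v_0 := v - \nabla q$. The compatibility condition $\int_\Omega \operatorname{div} v \, dx = \int_\Gamma v \cdot \mathbf{n} \, dS$ is formal but holds automatically once the problem is interpreted weakly. Since $vBMO(\Omega) \subset L^1(\Omega)$ for bounded $\Omega$ by \cite[Theorem 9]{GigaGu2}, I would formulate the problem as: find $q$ such that
\[
	\int_\Omega \nabla q \cdot \nabla \varphi \, dx = \int_\Omega v \cdot \nabla \varphi \, dx \quad \text{for all } \varphi \in C^\infty(\overline\Omega),
\]
which avoids having to define $\operatorname{div} v$ or the normal trace of $v$ separately. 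Solvability in $W^{1,p}(\Omega)/\mathbf{R}$ for some $p > 1$ follows from standard Neumann theory once John--Nirenberg on the bounded domain places $v$ in $L^p(\Omega)$.

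The heart of the argument is the a priori bound $[\nabla q]_{vBMO(\Omega)} \le C [v]_{vBMO(\Omega)}$, which I would establish by localization. Fix a $C^3$ partition of unity $\{\chi_j\}$ subordinate to a finite cover of $\overline\Omega$ by interior balls and boundary charts. On interior charts, classical Calder\'on--Zygmund theory for the Newtonian potential controls $[\nabla q]_{BMO^\mu}$ by $[v]_{BMO^\mu(\Omega)}$. On boundary charts, I would flatten $\Gamma$ via a $C^3$ diffeomorphism, recast the pulled-back equation as a constant-coefficient Neumann problem in the half space plus lower-order terms (produced by the curvature of $\Gamma$ and the change of variables), and invoke the half-space Helmholtz decomposition from \cite{GigaGu} for the principal part. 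The lower-order perturbations are absorbed using $v \in L^1(\Omega)$ and the boundary control $[\nabla d_\Gamma \cdot v]_{b^\nu} < \infty$ built into the $vBMO$ norm.

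The step I expect to be the main obstacle is the control of the normal component $\nabla d_\Gamma \cdot \nabla q$ in the boundary seminorm $[\,\cdot\,]_{b^\nu}$. The interior $BMO^\mu$ estimate is by now standard, but the refined boundary control singled out in the definition of $vBMO$ is subtle because $\nabla d_\Gamma \cdot \nabla q$ equals $v \cdot \mathbf{n}$ only on $\Gamma$ itself, so one must propagate the boundary identity into a tubular neighborhood. A natural device is the Neumann Green function $N(x,y)$ of $\Omega$: writing $\nabla q(x) = \int_\Omega (\nabla_x \nabla_y N)(x,y) \cdot v(y)\, dy$, one splits $\nabla_x \nabla_y N$ into its half-space model (whose normal-component boundary estimate is supplied by \cite{GigaGu}) and a smoother remainder controlled by the $C^3$ regularity of $\Gamma$. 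The remainder contributes only an $L^1$-type term, trivially bounded in $b^\nu$.

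Once the a priori estimate is in hand, the theorem follows. Existence of the decomposition is provided by the Neumann construction. For uniqueness, suppose $\nabla q \in vBMO_\sigma(\Omega)$; then $q$ is harmonic with vanishing Neumann data and is therefore constant, so $\nabla q = 0$. Thus $vBMO_\sigma(\Omega) \cap GvBMO(\Omega) = \{0\}$, which together with existence yields the algebraic direct sum \eqref{HD}. Continuity of the two projections $v \mapsto v_0$ and $v \mapsto \nabla q$ is exactly the a priori estimate, promoting \eqref{HD} to a topological direct sum.
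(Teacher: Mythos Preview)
Your overall architecture---localize, flatten, use the half-space case as a model, treat the rest as a perturbation---matches the paper's, and your uniqueness argument is fine. But two of the steps you label as routine are precisely where the paper does most of the work, and as stated they contain real gaps.

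First, after passing to normal coordinates the Laplacian becomes $-\Delta_\eta - B + M$ with $B=\sum_{i,j\le n-1}\partial_{\eta_i} b_{ij}\partial_{\eta_j}$ a \emph{second-order} operator whose coefficients vanish only at the center of the chart; $M$ is first order. So the deviation from the constant-coefficient half-space problem is not ``lower order'' and cannot be ``absorbed using $v\in L^1(\Omega)$.'' The paper handles $B$ by a freezing-coefficient argument: it builds the local volume potential as a Neumann series $\sum_k A^{-1}(B_1A^{-1})^k$, and convergence (in $BMO$ and in $L^p$) requires taking the chart small enough that $\|b_{ij}\|_{C^\gamma}\lesssim\rho^{1-\gamma}$ is below a fixed threshold. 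Simply invoking the half-space Helmholtz decomposition from \cite{GigaGu} for ``the principal part'' does not address this.

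Second---and this is the main missing idea---you do not explain how the $b^\nu$ control on $\nabla d\cdot\nabla q$ is obtained. The paper's mechanism is a parity trick: it extends the localized data across $\Gamma$ so that the normal component is \emph{odd} and the tangential component is \emph{even} in the normal coordinate $\eta_n$, and extends the coefficients $b_{ij}$ evenly. This forces each term of the Neumann series to produce a potential that is even in $\eta_n$, hence $\partial_{\eta_n}q$ is odd, and for an odd function the $b^\nu$-type average over balls centered on $\{\eta_n=0\}$ is automatically controlled by the $BMO$ seminorm. Your proposed route via the Neumann Green function, splitting $\nabla_x\nabla_y N$ into ``its half-space model plus a smoother remainder,'' is not how the paper proceeds and would itself require substantial justification: the paper instead decomposes $q=q_1+q_2$, builds the volume potential $q_1$ as above (no boundary condition imposed), and only then solves a genuine Neumann problem $\Delta q_2=0$, $\partial_{\mathbf n}q_2=(v-\nabla q_1)\cdot\mathbf n$, where the data is now $L^\infty(\Gamma)$ thanks to the normal-trace lemma for divergence-free $vBMO$ fields. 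The $vBMO$ bound for $\nabla q_2$ comes from explicit single-layer potential estimates (Lemma~\ref{NM}), not from a generic Green-function remainder argument.
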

%

As shown in \cite{GigaGu2}, the norm trace $v \cdot \mathbf{n}$ is well defined as an element of $L^\infty(\Gamma)$ for $v \in vBMO(\Omega)$ with $\operatorname{div}v = 0$.
 So far, the Helmholtz decomposition $BMO$ type space in a domain is only known for $vBMO^{\infty,\infty}$ when $\Omega$ is the half space
\[
	\mathbf{R}^n_+ = \left\{ x = (x_1, \ldots, x_n) \in \mathbf{R}^n \bigm|
	x_n > 0 \right\}
\] 
as shown in \cite{GigaGu}, where the normal trace is taken in locally $H^{-1/2}$ sense.

Here is our strategy to show Theorem \ref{M}.
 For a vector field $v$, we construct a linear map $v\longmapsto q_1$ such that $q_1$ satisfies
\[
	- \Delta q_1 = \operatorname{div}v \quad\text{in}\quad \Omega,
\]
where the divergence is taken in the sense of distribution.
There are many ways to construct such a map because there is no boundary condition.
A naive way is to extend $v$ in a suitable way to a function $\overline{v}$ on $\mathbf{R}^n$ so that $v\longmapsto\overline{v}$ is linear.
 We next consider the volume potential of $\operatorname{div}\overline{v}$, i.e.,
\[
	q_0(x) := \int_{\mathbf{R}^n} E(x-y) \operatorname{div} \overline{v}(y) \, dy 
	= E * \operatorname{div} \overline{v},
\]
where $E$ is the fundamental solution of $-\Delta$ in $\mathbf{R}^n$, i.e.,
\[
	E(x) :=
	\begin{cases}
  - \log |x| / 2 \pi & \quad (n = 2) \\
  |x|^{2-n} / \left( n(n-2) \alpha(n) \right) & \quad (n \geq 3),
  \end{cases}
\]
where $\alpha(n)$ denotes the volume of the unit ball $B_1(0)$ of $\mathbf{R}^n$.
 By the famous $BMO$-$BMO$ estimate due to Fefferman and Stein \cite{FS}, we have
\[
	[\nabla q_0]_{BMO^\infty(\mathbf{R}^n)}
	\leq C_0[\overline{v}]_{BMO^\infty(\mathbf{R}^n)}
\]
with $C_0>0$ independent of $\overline{v}$.
However, it is difficult to control $[\nabla d_\Gamma \cdot \nabla q_0]_{b^\nu}$ so we construct another function $q_1$ instead of $q_0$.

Although $BMO$ space does not allow the standard cut-off procedure, our space is in $L^1$, so we are able to decompose $v$ into two parts $v=v_1+ v_2$ such that the support of $v_2$ is close to $\Gamma$ while the support of $v_1$ is away from $\Gamma$;
see Proposition \ref{2M}.
For $v_1$ we just set
\[
	q^1_1 = E * \operatorname{div}v_1
\]
by extending $v_1$ as zero outside its support.
Then, the $L^\infty$ bound for $\nabla q^1_1$ is well controlled near $\Gamma$, which yields a bound for $b^\nu$ semi-norm.
To estimate $v_2$, we use a normal coordinate system near $\Gamma$ and reduce the problem to the half space. 
Let $d$ denotes the signed distance function where $d=d_\Gamma$ in $\Omega$ and $d = - d_\Gamma$ outside $\Omega$.
We extend $v_2$ to $\mathbf{R}^n$ so that the normal part $(\nabla d\cdot\overline{v}_2) \nabla d$ is odd and the tangential part $\overline{v_2} - (\nabla d \cdot \overline{v_2}) \nabla d$ is even in the direction of $\nabla d$ with respect to $\Gamma$.
In such type of coordinate system, the minus Laplacian can be transformed as
\[
	L = A - B + \text{lower order terms},\ 
	A = -\Delta_\eta, \ B = \sum_{1\leq i,j\leq n-1} \partial_{\eta_i} b_{ij} \partial_{\eta_j},
\]
where $\eta_n$ is the normal direction to the boundary so that $\{ \eta_n>0\}$ is the half space.
By choosing a suitable coordinate system to represent $\Gamma$ locally, we are able to arrange $b_{ij}=0$ at one point of the boundary of the local coordinate system.
We use a freezing coefficient method to construct volume potential $q_1^2$ and $q^3_1$, which corresponds to the contribution from the tangential part $\overline{v_2}^{\mathrm{tan}}$ and the normal part $\overline{v_2}^{\mathrm{nor}}$ respectively. Since the leading term of $\operatorname{div} \overline{v_2}^{\mathrm{nor}}$ in normal coordinate consists of the differential of $\eta_n$ only,
if we extend the coefficient $b_{ij}$ even in $\eta_n$, 
$q_1^3$ is constructed so that the leading term of $\nabla d \cdot \nabla q^3_1$ is odd in the direction of $\nabla d$.
On the other hand, as the leading term of $\operatorname{div} \overline{v_2}^{\mathrm{tan}}$ in normal coordinate consists of the differential of $\eta' = (\eta_1, ... , \eta_{n-1})$ only, the even extension of $b_{ij}$ in $\eta_n$ gives rise to $q_1^2$ so that the leading term of $\nabla d \cdot \nabla q_1^2$ is also odd in the direction of $\nabla d$.
Disregarding lower order terms and localization procedure, we set $q_1^2$ and $q^3_1$ of the form
\begin{align*}
	q_1^2 &= -L^{-1} \operatorname{div}\overline{v}^{\mathrm{tan}}_2 = -A^{-1} (I-BA^{-1})^{-1} \operatorname{div}\overline{v}^{\mathrm{tan}}_2, \\
	q^3_1 &= -L^{-1} \operatorname{div}\overline{v}^{\mathrm{nor}}_2 = -A^{-1} (I-BA^{-1})^{-1} \operatorname{div}\overline{v}^{\mathrm{nor}}_2.
\end{align*}
One is able to arrange $BA^{-1}$ small by taking a small neighborhood of a boundary point.
Then $(I-BA^{-1})^{-1}$ is given as the Neumann series $\sum^\infty_{m=0}(BA^{-1})^m$.
We are able to establish $BMO$-$BMO$ estimate for $\nabla q_1^2$ and $\nabla q^3_1$, i.e.
\[
	\left[ \nabla q_1^2 \right]_{BMO(\mathbf{R}^n)} \leq C'_0 \left[ \overline{v}^{\mathrm{tan}}_2 \right]_{BMO(\mathbf{R}^n)}, \;
	\left[ \nabla q^3_1 \right]_{BMO(\mathbf{R}^n)} \leq C'_0 \left[ \overline{v}^{\mathrm{nor}}_2 \right]_{BMO(\mathbf{R}^n)}
\]
with some constant $C'_0$ independent of $\overline{v_2}$.
Since the leading term of $\nabla d \cdot (\nabla q_1^2 + \nabla q^3_1)$ is odd in the direction of $\nabla d$ with respect to $\Gamma$, the $BMO$ bound implies $b^\nu$ bound.
Note that $\left[\overline{v_2}^{\mathrm{nor}}\right]_{BMO(\mathbf{R}^n)}$ is controlled by $[v_2]_{b^\nu}$ and $[v_2]_{BMO(\Omega)}$ since $\overline{v_2}^{\mathrm{nor}}$ is odd in the direction of $\nabla d$ with respect to $\Gamma$.
By the procedure sketched above, we are able to construct a suitable operator by setting $q_1=q^1_1+q^2_1+q^3_1$.
\begin{theorem}[Construction of a suitable volume potential] \label{CSV}
Let $\Omega$ be a bounded $C^3$ domain in $\mathbf{R}^n$.
 Then, there exists a linear operator $v\longmapsto q_1$ from $vBMO(\Omega)$ to $L^\infty(\Omega)$ such that
\[
	- \Delta q_1 = \operatorname{div}v \quad\text{in}\quad \Omega
\]
and that there exists a constant $C_1=C_1(\Omega)$ satisfying
\[
	\|\nabla q_1\|_{vBMO(\Omega)} \leq C_1 \|v\|_{vBMO(\Omega)}.
\]
In particular, the operator $v\longmapsto\nabla q_1$ is a bounded linear operator in $vBMO(\Omega)$.
\end{theorem}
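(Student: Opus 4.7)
The plan is to follow the strategy sketched after Theorem~\ref{M} and build $q_1 = q_1^1 + q_1^2 + q_1^3$. First I would invoke Proposition~\ref{2M} to split $v = v_1 + v_2$ linearly and continuously in $vBMO(\Omega)$, with $\operatorname{supp} v_1$ bounded away from $\Gamma$ and $\operatorname{supp} v_2$ contained in a thin tubular neighborhood of $\Gamma$. For the interior piece, I extend $v_1$ by zero to $\mathbf R^n$ and set $q_1^1 = E * \operatorname{div} v_1$. The Fefferman--Stein BMO--BMO estimate gives a bound on $[\nabla q_1^1]_{BMO(\mathbf R^n)}$ by $[v_1]_{BMO(\Omega)}$, and since $v_1$ is supported uniformly away from $\Gamma$, an integration by parts followed by direct differentiation under the integral sign yields a pointwise $L^\infty$ bound for $\nabla q_1^1$ in a neighborhood of $\Gamma$. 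This controls $[\nabla d \cdot \nabla q_1^1]_{b^\nu}$ by $\|v_1\|_{L^1(\Omega)}$, which is dominated by $\|v\|_{vBMO(\Omega)}$ thanks to \cite[Theorem 9]{GigaGu2}.

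Next, for $v_2$ I cover $\Gamma$ by finitely many charts $U_k$ admitting normal coordinates $\eta = (\eta', \eta_n)$ with $\eta_n = d$, and use a subordinate smooth partition of unity $\{\chi_k\}$ to write $v_2 = \sum_k \chi_k v_2$. In the $k$-th chart, $-\Delta = A - B + R$ with $A = -\Delta_\eta$, $B = \sum_{i,j<n} \partial_{\eta_i}(b_{ij}\partial_{\eta_j})$, and lower-order remainder $R$. Centering the chart at a boundary point where $b_{ij}$ vanishes and then shrinking $U_k$ makes the operator norm of $BA^{-1}$ on $BMO(\mathbf R^n)$ arbitrarily small, so the Neumann series $(I - BA^{-1})^{-1} = \sum_m (BA^{-1})^m$ converges. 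Extending $b_{ij}$ evenly in $\eta_n$, the tangential part $(\chi_k v_2)^{\mathrm{tan}}$ evenly, and the normal part $(\chi_k v_2)^{\mathrm{nor}}$ oddly, I set
\begin{align*}
  q_1^{2,k} &= -A^{-1}(I - BA^{-1})^{-1} \operatorname{div} \overline{(\chi_k v_2)^{\mathrm{tan}}}, \\
  q_1^{3,k} &= -A^{-1}(I - BA^{-1})^{-1} \operatorname{div} \overline{(\chi_k v_2)^{\mathrm{nor}}},
\end{align*}
and define $q_1^2, q_1^3$ by summing over $k$ after pulling back to $\Omega$.

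The Fefferman--Stein estimate applied termwise to the Neumann series gives BMO bounds on $\nabla q_1^{2,k}$ and $\nabla q_1^{3,k}$ by the BMO norms of the corresponding extensions; the BMO norm of the odd extension of the normal part is in turn controlled by $[v]_{BMO(\Omega)} + [\nabla d \cdot v]_{b^\nu}$, while the even extension of the tangential part is controlled by $[v]_{BMO(\Omega)}$ alone. For the $b^\nu$ bound, the even extension of $b_{ij}$ ensures that $A$ and $B$ preserve parity in $\eta_n$ appropriately, so the leading term of $\nabla d \cdot (\nabla q_1^{2,k} + \nabla q_1^{3,k})$ is odd in $\eta_n$ and hence vanishes on $\Gamma$; combined with the BMO bound and an elementary Poincar\'e-type argument in the tubular neighborhood, this yields the desired $b^\nu$ estimate. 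Lower-order remainders arising from $R$, from the curvature of $\Gamma$, and from the commutators $[\operatorname{div}, \chi_k]$ produce corrections of strictly lower differentiation order that can be absorbed into $q_1^1$-type interior estimates or handled by iteration. Summing over $k$ and adding $q_1^1$ delivers a linear operator with the stated bound, and a direct check of $-\Delta q_1 = \operatorname{div} v$ in each chart (modulo these lower-order corrections, which are absorbed in the same iteration) completes the construction.

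The main obstacle is precisely the parity bookkeeping underlying the $b^\nu$ estimate: one must track how the even-in-$\eta_n$ extension of $b_{ij}$ and the even/odd extensions of the tangential/normal parts of $v_2$ propagate through $(I - BA^{-1})^{-1}$ so that the leading term of the normal component of $\nabla q_1^2 + \nabla q_1^3$ is genuinely odd in $\eta_n$, and simultaneously verify that every lower-order remainder (from $R$, from curvature, and from the partition of unity) satisfies both the BMO and the $b^\nu$ bounds. The BMO bound itself is a fairly direct application of \cite{FS} to a convergent Neumann series; the technical heart of the proof is this combined parity and remainder analysis.
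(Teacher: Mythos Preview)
Your strategy matches the paper's almost exactly: interior/boundary split via Proposition~\ref{2M}, Newtonian potential for the interior piece, normal coordinates plus even/odd extension and a Neumann series $(I-BA^{-1})^{-1}$ for the boundary piece, with parity of $\partial_{\eta_n}$ of the local potential giving the $b^\nu$ bound from the $BMO$ bound. Two execution points are worth sharpening. First, the Neumann series does not converge in pure $BMO(\mathbf{R}^n)$: multiplication by the (small H\"older) coefficients $b_{ij}$ is only bounded on $bmo = BMO\cap L^1_{\mathrm{ul}}$, so the paper runs the iteration in $BMO\cap L^p$ (Lemma~\ref{3P}, estimates \eqref{PEG}--\eqref{BEG}); the oddness in $\eta_n$ is what lets you recover $\|f\|_{L^1}\le C_\rho[f]_{BMO}$ for $f\in Z_\rho$ and close the loop. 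Second, the local potentials $p_j$ only solve $-\Delta p_j=\operatorname{div}w_j$ in the chart $U_{2\rho,j}\cap\Omega$, so ``summing over $k$ after pulling back'' is not enough: the paper inserts a cutoff $\theta_j$ and adds the explicit Newtonian correction $E*(p_j\Delta\theta_j+2\nabla\theta_j\cdot\nabla p_j)$ to obtain a global solution, using $p_j\in L^\infty$ and $\nabla p_j\in L^p$ to estimate this lower-order term. Both points fall under what you flagged as the technical heart, so your outline is sound.
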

%

By this operator, we observe that $w=v-\nabla q_1$ is divergence free in $\Omega$.
 Unfortunately, this $w$ may not fulfill the trace condition $w\cdot\mathbf{n}=0$ on the boundary $\Gamma$.
 We construct another potential $q_2$ by solving the Neumann problem
\begin{align*}
	\Delta q_2 &= 0 \quad\text{in}\quad \Omega \\
	\frac{\partial q_2}{\partial n} &= w\cdot\mathbf{n} \quad\text{on}\quad \Gamma.
\end{align*}
We then set $q=q_1+q_2$.
 Since $\partial q_2/\partial\mathbf{n}=\nabla q_2\cdot \mathbf{n}$, $v_0=v-\nabla q$ gives the Helmholtz decomposition \eqref{H}.
 To complete the proof of Theorem \ref{M}, it suffices to prove that $\|\nabla q_2\|_{vBMO(\Omega)}$ is bounded by a constant multiply of $\|v\|_{vBMO(\Omega)}$.
\begin{lemma}[Estimate of the normal trace] \label{ET}
Let $\Omega$ be a bounded $C^{2+\kappa}$ domain in $\mathbf{R}^n$ with $\kappa \in (0,1)$.
 Then there is a constant $C_2 = C_2(\Omega)$ such that
\[
	\| w \cdot \mathbf{n} \|_{L^\infty(\Gamma)} 
	\leq C_2 \| w \|_{vBMO(\Omega)}
\]
for all $w \in vBMO(\Omega)$ with $\operatorname{div}w = 0$.
\end{lemma}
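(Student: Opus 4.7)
The plan is to localize near an arbitrary boundary point, flatten the boundary by a $C^{2+\kappa}$ diffeomorphism, apply a Piola transformation to preserve the divergence-free condition, and then establish the bound in a half-ball by a duality identity pairing $\tilde w_n$ on $\{y_n=0\}$ against a mollifier, together with the divergence-free relation and the structural $vBMO$ seminorms.

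Fix $x_0 \in \Gamma$ and choose a $C^{2+\kappa}$ chart $\Psi : B_R^+ \to \Omega \cap U$ with $\Psi(0)=x_0$ and $\Psi(B_R \cap \{y_n=0\}) \subset \Gamma$. The Piola transform $\tilde w(y) := (\det D\Psi)\, D\Psi^{-1} w(\Psi(y))$ is divergence free in $B_R^+$. Since $d_\Gamma \circ \Psi$ agrees with $y_n$ up to a positive smooth factor near the boundary, a short chain-rule computation yields $(\nabla d_\Gamma \cdot w)\circ \Psi = \tilde w_n/\det D\Psi$, so that $[\tilde w_n]_{b^{\nu'}(B_R^+)} \le C\,[\nabla d_\Gamma\cdot w]_{b^\nu}$ for some $\nu'>0$. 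Invariance of BMO seminorms under $C^{1+\kappa}$ diffeomorphisms gives $[\tilde w]_{BMO(B_R^+)} \le C\,[w]_{BMO^\mu}$, and $(w\cdot\mathbf{n})(\Psi(y',0))$ equals $-\tilde w_n(y',0)$ up to a smooth bounded positive factor. It therefore suffices to bound $\tilde w_n(y',0)$ in $L^\infty$ near $y'=0$.

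Pick a Lebesgue point $y_0' \in B_R\cap\{y_n=0\}$ of the trace and a nonnegative mollifier $\varphi_r \in C^\infty_c(B_r'(y_0'))$ with $\int\varphi_r\,dy'=1$, $\|\varphi_r\|_\infty \le Cr^{-(n-1)}$ and $\|\nabla_{y'}\varphi_r\|_\infty \le Cr^{-n}$. From the distributional identity $\partial_n\tilde w_n=-\sum_{i<n}\partial_i\tilde w_i$ in $B_R^+$, the map $y_n \mapsto \int \varphi_r(y')\,\tilde w_n(y',y_n)\,dy'$ is absolutely continuous with derivative $-\sum_{i<n}\int\partial_i\varphi_r\,\tilde w_i\,dy'$ (after tangential integration by parts), so for a.e.\ $h\in(0,R)$
\[
\int\varphi_r\tilde w_n(y',0)\,dy' = \int\varphi_r\tilde w_n(y',h)\,dy' - \sum_{i<n}\int_0^h\!\int\partial_i\varphi_r\,\tilde w_i(y',s)\,dy'\,ds.
\]
Averaging in $h$ over $(r/2,r)$ expresses $\int\varphi_r\tilde w_n(\cdot,0)\,dy'$ as a sum of two volume integrals over the cylinder $B_r^+ := B_r'(y_0')\times(0,r)$. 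The normal integral is at most $Cr^{-n}\int_{B_{2r}(y_0',0)\cap \mathbf{R}^n_+}|\tilde w_n|\,dy \le C\,[\tilde w_n]_{b^{\nu'}}$ for $2r<\nu'$. For the tangential integral the crucial cancellation is $\int\partial_i\varphi_r\,dy'=0$: this lets us replace $\tilde w_i$ by $\tilde w_i - (\tilde w_i)_{B_r^+}$, after which the oscillation bound $\int_{B_r^+}|\tilde w-(\tilde w)_{B_r^+}|\,dy \le Cr^n\,[\tilde w]_{BMO}$, obtained by covering $B_r^+$ with a bounded number of balls lying strictly inside $\{y_n>0\}$ and chaining their averages in the standard way, combined with $\|\partial_i\varphi_r\|_\infty \le Cr^{-n}$, controls this integral by $C\,[\tilde w]_{BMO}$. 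Combining gives $|\int\varphi_r\tilde w_n(\cdot,0)\,dy'|\le C\|w\|_{vBMO}$ uniformly in $r$. Since the normal trace was identified as an $L^\infty(\Gamma)$ function in \cite{GigaGu2}, sending $r\to 0$ at Lebesgue points and undoing the chart yields $|(w\cdot\mathbf{n})(x_0)|\le C\|w\|_{vBMO}$ for a.e.\ $x_0 \in \Gamma$; a finite chart cover produces a uniform $C_2$.

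The main obstacle is the tangential estimate: naively, $r^{-n}\int_{B_{2r}\cap\Omega}|\tilde w|\,dy$ is \emph{not} controlled by $\|w\|_{vBMO}$, because only the normal component appears in the $b^\nu$ seminorm. The saving cancellation $\int\partial_i\varphi_r\,dy'=0$, which allows us to insert any $y'$-independent constant inside the mollifier integral and then activate the BMO oscillation estimate on the near-boundary cylinder $B_r^+$, is what rescues the argument. A secondary point is the rigorous justification of the distributional identity at the $vBMO$ level, which can be handled by first approximating $w$ by smooth divergence-free fields and passing to the limit once the a priori bound is secured.
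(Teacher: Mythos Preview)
The paper does not give its own proof of this lemma; it simply records that ``This is a special case of the trace theorem established in \cite{GigaGu2}.'' Your proposal therefore does something the paper does not attempt, namely a self-contained direct argument via boundary flattening, the Piola transform, and a mollifier identity that separates the normal contribution (controlled by $[\,\cdot\,]_{b^\nu}$) from the tangential one (controlled, after subtracting a constant, by $[\,\cdot\,]_{BMO}$). The overall strategy is sound and is essentially how such trace estimates are proved; the cancellation $\int \partial_i\varphi_r\,dy'=0$ is exactly the right mechanism.

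There is, however, a genuine gap in your justification of the key oscillation bound
\[
r^{-n}\int_{B_r^+}\bigl|\tilde w-(\tilde w)_{B_r^+}\bigr|\,dy\le C\,[\tilde w]_{BMO},
\qquad B_r^+ = B_r'(y_0')\times(0,r).
\]
You write that this follows by ``covering $B_r^+$ with a bounded number of balls lying strictly inside $\{y_n>0\}$''. That is impossible: $B_r^+$ touches $\{y_n=0\}$, and no finite family of balls with closure in $\{y_n>0\}$ can cover it. The estimate is nevertheless true, but it needs either (a) a Whitney decomposition of $B_r^+$ into cubes $Q_k$ with $\mathrm{diam}\,Q_k\sim\mathrm{dist}(Q_k,\{y_n=0\})$, together with the chain estimate $|f_{Q_k}-f_{B_0}|\le Cj\,[f]_{BMO}$ for a cube at dyadic level $j$, after which the sum $\sum_j j\,2^{-j}$ converges; or (b) the even reflection of $\tilde w_i$ across $\{y_n=0\}$, whose $BMO(\mathbf{R}^n)$ seminorm is controlled by $[\tilde w_i]_{BMO(\mathbf{R}^n_+)}$ (this is the half-space lemma quoted in the paper from \cite{GigaGu}), reducing the half-cylinder to a full cylinder covered by boundedly many \emph{genuine} balls. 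Either route closes the gap.

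Two smaller points. First, be consistent about the chart: if $\Psi$ is the normal-coordinate chart then $y_n=d_\Gamma$ exactly and your identity $(\nabla d_\Gamma\cdot w)\circ\Psi=\tilde w_n/\det D\Psi$ holds on the nose, but $\Psi$ is only $C^{1+\kappa}$ and the Piola identity must be read distributionally; if $\Psi$ is the graph chart then $\Psi\in C^{2+\kappa}$ and Piola is classical, but $\nabla_x y_n$ agrees with $\nabla d_\Gamma$ only on $\Gamma$, producing an error term $O(y_n)|w|$ in $\tilde w_n$ whose $b^{\nu'}$-contribution you must bound separately (it is harmless, since $r^{-n}\int_{B_r^+} y_n|w|\lesssim r(1+|\log r|)\|w\|_{vBMO}$ via chaining). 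Second, invoking \cite{GigaGu2} for ``the normal trace was identified as an $L^\infty(\Gamma)$ function'' is circular, since that is precisely the content of the lemma. Instead, use the elementary fact that for $w\in L^1(\Omega)$ with $\operatorname{div}w=0$ the normal trace exists as a distribution on $\Gamma$ via $\langle w\cdot\mathbf{n},\phi\rangle=\int_\Omega w\cdot\nabla\Phi$; your uniform bound on $\langle w\cdot\mathbf{n},\varphi_r\rangle$ over all nonnegative $\varphi_r$ with $\int\varphi_r=1$ then forces this distribution to lie in $L^\infty$ with the claimed bound.
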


This is a special case of the trace theorem established in \cite{GigaGu2}.
 We finally need the estimate for the Neumann problem.
\begin{lemma}[Estimate for the Neumann problem] 
\label{EN}
Let $\Omega$ be a bounded $C^2$ domain.
 For $g \in L^\infty(\Gamma)$ satisfying $\int_\Gamma g \, d\mathcal{H}^{n-1}=0$, there exists a unique (up to constant) solution $u$ to the Neumann problem
%
\begin{align}
&\begin{aligned} \label{1NP} 
	\Delta u &= 0 \quad\text{in}\quad \Omega \\
	\frac{\partial u}{\partial \mathbf{n}} &= g \quad\text{on}\quad \Gamma
\end{aligned}
\end{align}
such that the operator $g \longmapsto u$ is linear and that there exists a constant $C_3 = C_3(\Omega)$ such that
\[
	\| \nabla u \|_{vBMO(\Omega)} \leq C_3 \|g\|_{L^\infty(\Gamma)}.
\]
\end{lemma}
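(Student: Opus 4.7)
The plan is to combine a variational existence argument with a boundary-layer representation to extract the $vBMO$ estimate. First I would settle existence and uniqueness up to a constant by a standard variational argument: working in $H^1(\Omega)/\mathbf{R}$, the bilinear form $a(u,v) = \int_\Omega \nabla u \cdot \nabla v\,dx$ is coercive by Poincar\'e's inequality on the quotient, and the compatibility hypothesis $\int_\Gamma g\,d\mathcal{H}^{n-1} = 0$ makes the linear functional $\ell(v) = \int_\Gamma g\,v\,d\mathcal{H}^{n-1}$ well defined on $H^1(\Omega)/\mathbf{R}$. Lax--Milgram then produces a unique weak solution $u \in H^1(\Omega)/\mathbf{R}$ depending linearly on $g$, and elliptic interior regularity makes $u$ smooth in $\Omega$, so only the boundary behavior of $\nabla u$ requires delicate work.

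To extract the $vBMO$ estimate I would represent $u$ by a single-layer potential,
\[ u(x) = \mathcal{S}\phi(x) := \int_\Gamma E(x-y)\,\phi(y)\,d\mathcal{H}^{n-1}(y), \]
for an unknown density $\phi$ on $\Gamma$. The interior-limit jump formula for the normal derivative of $\mathcal{S}\phi$ reduces \eqref{1NP} to a second-kind boundary integral equation of the form $T\phi = g$ on $\Gamma$, where $T$ differs from the adjoint double-layer operator $K^*$ by a multiple of the identity. For a bounded $C^2$ domain the kernel of $K^*$ is weakly singular of order $n-2$ (the factor $(x-y)\cdot\mathbf{n}(x)$ kills one order of the singularity), so $K^*$ is compact on $C(\Gamma)$ and $T$ is Fredholm of index zero; its null space is at most one-dimensional, and the compatibility hypothesis $\int_\Gamma g\,d\mathcal{H}^{n-1} = 0$ is precisely what places $g$ in the range of $T$. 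This yields $\phi \in L^\infty(\Gamma)$ with $\|\phi\|_{L^\infty(\Gamma)} \leq C\|g\|_{L^\infty(\Gamma)}$, uniquely up to the null space of $T$.

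The two seminorms in $\|\nabla u\|_{vBMO(\Omega)}$ are then handled separately. For the boundary part $[\nabla d_\Gamma\cdot\nabla u]_{b^\nu}$ I would exploit that, on a $C^2$ surface, the inner product $\nabla d_\Gamma(x)\cdot\nabla_x E(x-y)$ exhibits the same cancellation structure that makes the double-layer kernel weakly singular, leaving a kernel of order $n-2$ whose integral over $\Gamma$ against $\phi$ is uniformly bounded in $x\in\Omega$; combined with the jump relation this yields $\|\nabla d_\Gamma\cdot\nabla u\|_{L^\infty(\Omega)} \leq C\|\phi\|_{L^\infty(\Gamma)}$, which in particular dominates the $b^\nu$ seminorm. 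For $[\nabla u]_{BMO^\mu(\Omega)}$ I would invoke the $L^\infty$-to-$BMO$ mapping property of the Calder\'on--Zygmund kernel $\nabla_x E(x-y)$: splitting the single-layer integral over $\Gamma$ into the near and far contributions around each ball $B_r(x)\subset\Omega$ produces the standard oscillation estimate, in the spirit of the Fefferman--Stein bound already invoked in Theorem \ref{CSV}.

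The main obstacle I anticipate is the $L^\infty$-solvability of the boundary equation. Fredholm theory for $T$ is classical on $L^2(\Gamma)$, but one needs its stronger $C(\Gamma)$ version, which hinges on the compactness of $K^*$ on $C(\Gamma)$ and thereby uses the $C^2$ regularity of $\Gamma$ in an essential way; once this step is in place, the two seminorm estimates follow from the kernel bounds above together with the classical Calder\'on--Zygmund $L^\infty$-to-$BMO$ inequality.
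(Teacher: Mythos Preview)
Your approach is workable but differs from the paper's in one structural choice and has one imprecision worth flagging.

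The paper bypasses the Fredholm boundary-integral equation entirely. Instead of writing $u = \mathcal{S}\phi$ and solving $T\phi = g$, it represents $u$ directly via the Neumann Green function $N(x,y) = E(x-y) + h(x,y)$, so that $u(x) = \int_\Gamma N(x,y)\,g(y)\,d\mathcal{H}^{n-1}(y)$ with the density being $g$ itself. The remainder $h$ satisfies $\sup_{x}\int_\Gamma |\nabla_x h(x,y)|^{1+\delta}\,d\mathcal{H}^{n-1}(y) < \infty$ (via $L^p$-regularity for the Neumann problem plus a trace argument), so its contribution to $\nabla u$ is uniformly bounded in $L^\infty$. This reduces everything to estimating $\nabla E * (\delta_\Gamma \otimes g)$ --- the same single-layer object you arrive at, but without the detour through $C(\Gamma)$-Fredholm theory. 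Your route trades knowledge of the Green-function remainder for classical compactness of $K^*$ on $C(\Gamma)$; both are legitimate.

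For the $BMO$ part the paper does not run a near/far split on the surface integral. It rewrites $\delta_\Gamma \otimes g = \operatorname{div}(g_{e,c}\,1_\Omega\,\nabla d) - 1_\Omega \operatorname{div}(g_{e,c}\nabla d)$, where $g_{e,c}$ is a cut-off normal extension of $g$; then $\nabla E * \operatorname{div}$ is a standard Calder\'on--Zygmund operator on $\mathbf{R}^n$ to which the $L^\infty$--$BMO$ bound applies directly, and the second term has a locally integrable kernel. Your direct near/far argument also goes through (the near part is handled by Fubini and $\int_{B_r}|z-y|^{1-n}\,dz \lesssim r$), but the paper's divergence rewriting is cleaner.

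The imprecision is in your $b^\nu$ argument. The kernel $\nabla d(x)\cdot\nabla_x E(x-y)$ for $x \in \Omega$, $y \in \Gamma$ is \emph{not} of order $|x-y|^{2-n}$: at $y = \pi x$ it blows up like $d(x)^{1-n}$. What holds is the decomposition
\[
\nabla d(x)\cdot\nabla E(x-y) = \bigl(\nabla d(x) - \nabla d(y)\bigr)\cdot\nabla E(x-y) - \frac{\partial E}{\partial\mathbf{n}_y}(x-y);
\]
the first piece is of order $n-2$ by Lipschitz continuity of $\nabla d$, but the second is the double-layer kernel, whose pointwise singularity is still of order $n-1$. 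The needed fact is that its \emph{integral} $\int_\Gamma |\partial E/\partial\mathbf{n}_y(x-y)|\,d\mathcal{H}^{n-1}(y)$ is bounded uniformly in $x \in \Omega$ --- this is the paper's Lemma~\ref{EP}, proved by splitting off the positive Poisson-like leading part and using Gauss's identity $\int_\Gamma \partial E/\partial\mathbf{n}_y(x-y)\,d\mathcal{H}^{n-1}(y) = -1$. So your conclusion $\|\nabla d \cdot \nabla u\|_{L^\infty} \lesssim \|\phi\|_{L^\infty}$ is correct, but the mechanism is an $L^1$-kernel bound, not a pointwise improvement of the singularity.
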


Combining these two lemmas, Theorem \ref{CSV} yields
\begin{align*}
	\|\nabla q_2\|_{vBMO(\Omega)} &\leq C_3 C_2 \|v-\nabla q_1\|_{vBMO(\Omega)} \\
	&\leq C_3 C_2 (1 + C_1) \|v\|_{vBMO(\Omega)}.
\end{align*}
Setting $q=q_1+q_2$ and $v_0=v-\nabla q$, we now observe that the projections $v\longmapsto v_0$, $v\longmapsto \nabla q$ are bounded in $vBMO(\Omega)$, which yields \eqref{HD} in Theorem \ref{M}.

To show Lemma \ref{EN} let $N(x,y)$ be the Neumann Green function.
Then a solution of \eqref{1NP} is given by $\int_{\Gamma} N(x,y) g(y) \, \mathrm{d} \mathcal{H}^{n-1}$.
It is well-known (see e.g.\ \cite[Appendix]{GGH}) that leading part of $N$ is $E(x-y)$. We have to estimate
\begin{equation*}
	\left\| \nabla E \ast (\delta_{\Gamma} \otimes g)\right\|_{vBMO^{\infty,\nu}(\Omega)}.
\end{equation*}
Here $\delta_{\Gamma}$ denotes the delta function supported on $\Gamma$, i.e.,
\begin{align*}
\delta_{\Gamma} : \psi \mapsto \int_{\Gamma} \psi \, \mathrm{d} \mathcal{H}^{n-1}
\end{align*}
for $\psi \in C_{c}^{\infty}(\mathbb{R}^n)$. We take a $C^{2}$ cutoff function $\theta \geq 0$ such that $\theta(\sigma) = 1$ for $\sigma \leq 1$, $\theta(\sigma) = 0$ for $\sigma \geq 2$. We take $\delta$ small so that $2\delta$ is smaller than the reach of $\Gamma$. By this choice, $\theta_d = \theta(d/\delta)$ is $C^{2}$ in $\mathbf{R}^n$, where $d$ denotes the signed distance function from $\Gamma$ so that $\nabla d = - \mathbf{n}$ on $\Gamma$. 
For $g \in L^{\infty}(\Gamma)$, we extend $g$ so that $\nabla d \cdot g = 0$ near the $2\delta$-neighborhood of $\Gamma$. Let $g_{e}$ denotes this extension and set $g_{e,c} = \theta_d g_e$. A key observation is that
\begin{align*}
&\delta_{\Gamma} \otimes g = ( \nabla 1_{\Omega} \cdot \nabla d ) g_{e,c} = \operatorname{div} \, (g_{e,c} 1_{\Omega} \nabla d) - 1_{\Omega} \operatorname{div} \, (g_{e,c} \nabla d) \\
&\operatorname{div} \, (g_{e,c} \nabla d) = g_{e,c} \Delta d + \nabla d \cdot \nabla g_{e,c} = g_{e,c} \Delta d + \frac{\theta'(d/\delta)}{\delta} g_e,
\end{align*}
where $1_{\Omega}$ is the characteristic function of $\Omega$. The leading (singular) part of $\nabla E \ast (\delta_{\Gamma} \otimes g)$ is the term involving $\operatorname{div} \, (g_{e,c} 1_{\Omega} \nabla d)$.
 The famous $L^\infty$-$BMO$ estimate for the singular integral operator $\nabla E*\operatorname{div}$ yields
\begin{align*}
	\left\| \nabla E* \operatorname{div}\,(g_{e,c} 1_{\Omega} \nabla d)\right\|_{BMO(\mathbf{R}^n)}
	\leq C \|g_{e,c} \nabla d\|_{L^{\infty}(\Omega)}
	\leq C' \|g\|_{L^{\infty}(\Gamma)}
\end{align*}
with $C$ and $C'$ independent of $g$.
 All other terms can be estimated easily since the integral kernel is integrable.
 A direct calculation gives an $L^\infty$ estimate near $\Gamma$ for $\nabla d\cdot\nabla E*(\delta_\Gamma\otimes g)$ which yields
\[
	\left[ \nabla d \cdot \nabla E \ast (\delta_\Gamma \otimes g) \right]_{b^\nu}
	\leq C_4 \| g \|_{L^\infty(\Gamma)}
\]
with $C_4$ independent of $g$, but it is impossible to estimate $b^\nu$-seminorm of the tangential part.
This is the main reason why we use $vBMO$ instead of $BMO_b$-type space where $b^\nu$-boundedness of ALL components of vector fields is imposed; see the end of Section \ref{END}.

To extend our results to a more general domain it seems to be reasonable to consider $vBMO\cap L^2$. 
This is because $L^p\cap L^2\ (p>2)$ admits the Helmholtz decomposition for arbitrary uniformly $C^2$ domains as proved in \cite{FKS1}, \cite{FKS2}.

Our approach in this paper is to derive the boundedness of the operator $v\mapsto\nabla q$ by a potential-theoretic approach.
In $L^p$ setting there is a variational approach based on duality introduced by \cite{SS}; see also \cite{FKS1}. 
The key estimate is 
\[
	\|\nabla q\|_{L^p(\Omega)} \leq C_5 \sup
	\left\{ \int_\Omega \nabla q\cdot\nabla\varphi \, dx \Bigm|
	\|\nabla\varphi\|_{L^{p'}(\Omega)} \leq 1 \right\}
\]
with $C_5$ independent of $q$, where $1/p+1/p'=1$, $1<p<\infty$.
 Formally, this estimate yields the desired bound $\|\nabla q\|_{L^p(\Omega)}\leq C_5\|v\|_{L^p(\Omega)}$ since $\int_\Omega \nabla q\cdot\nabla\varphi \, dx = \int_\Omega v\cdot\nabla\varphi \, dx$.
 At this moment, it is not clear that similar estimate holds if one replaces $L^p(\Omega)$ by $vBMO$ since the predual space of $vBMO$ is not clear.

For $BMO_b$ type solution, it is known that the Stokes semigroup is analytic \cite{BG}, \cite{BGS}.
 However, it is nontrivial to extend to the space $vBMO$ since in the half space the Stokes operator with Dirichlet boundary condition does not generate a semigroup because $\left[u(t)\right]_{vBMO}$ for the solution $u(t)$ may be non-zero for $t>0$ for initial data $u_0$ with $[u_0]_{vBMO}=0$ so that $u^{\tan}_0$ may be a non-zero constant \cite[Example 6.5]{BG}.

This paper is organized as follows.
In Section \ref{sec:1}, to construct a volume potential of $\operatorname{div}v$, we localize the problem and reduce the problem to small neighborhoods of points on the boundary.
In Section \ref{sec:2}, we construct a leading part of the volume potential by a perturbation method called the freezing coefficient method.
In these two sections, we complete the proof of Theorem \ref{CSV}.
In Section \ref{NPB}, we prove Lemma \ref{EN} by estimating the single layer potential.

\section{Construction of volume potentials} 
\label{sec:1}

For $v \in vBMO(\Omega)$, we shall construct a suitable potential $q_1$ so that $v\longmapsto\nabla q_1$ is a bounded linear operator in $vBMO$ as stated in Theorem \ref{CSV}.
 In this section, as a preliminary, we reduce the problem to the case that the support of $v$ is contained in a small neighborhood of a point of the boundary and it consists of only normal part.
 
\subsection{Localization procedure} 
\label{LOC}
Let $\Omega$ be a uniformly $C^k$ domain in $\mathbf{R}^n$ ($k\geq 1$).
In other words, there exists $r_*, \delta_*>0$ such that for each $z_0 \in \Gamma$, up to translation and rotation, there exists a function $h_{z_0}$ which is $C^k$ in a closed ball $B_{r_*}(0')$ of radius $r_*$ centered at the origin $0'$ of $\mathbf{R}^{n-1}$ satisfying following properties:
\begin{enumerate}
\item[(i)] $K_\Gamma := \sup_{B_{r_*}(0')} \left| (\nabla')^s h_{z_0} \right| < \infty$ for $s=0,1,2,\ldots,k$, where $\nabla'$ denotes the gradient in $x'\in\mathbf{R}^{n-1}$;
 $\nabla'h(0')=0$, $h(0')=0$,
\item[(i\hspace{-1pt}i)] $\Omega\cap U_{r_*,\delta_*, h_{z_0}}(z_0)=\left\{ (x',x_n)\in\mathbf{R}^n \bigm| h_{z_0}(x')<x_n<h_{z_0}(x')+\delta_*,\ |x'|<r_* \right\}$ for
\[
	U_{r_*,\delta_*, h_{z_0}}(z_0) := \left\{ (x',x_n)\in\mathbf{R}^n \bigm| h_{z_0}(x')-\delta_* < x_n < h_{z_0}(x')+\delta_*,\ |x'|<r_* \right\},
\]
\item[(i\hspace{-1pt}i\hspace{-1pt}i)] $\Gamma\cap U_{r_*,\delta_*, h_{z_0}}(z_0)=\left\{ (x',x_n)\in\mathbf{R}^n \bigm| x_n = h_{z_0}(x'),\ |x'|<r_* \right\}$.
\end{enumerate}
A bounded $C^k$ domain is, of course, a uniformly $C^k$ domain.

Let $d$ denote the signed distance function from $\Gamma$ which is defined by
\begin{equation} 
	d(x) = \left \{
\begin{array}{r}
	\displaystyle \inf_{y\in\Gamma}|x-y| \quad\text{for}\quad x\in\Omega, \\
	\displaystyle -\inf_{y\in\Gamma}|x-y| \quad\text{for}\quad x\notin\Omega 
\end{array}
	\right.
\end{equation}
%
so that $d(x)=d_\Gamma(x)$ for $x\in\Omega$.
 If $\Omega$ is a bounded $C^2$ domain, then there is $R_*>0$ such that if $\left|d(x)\right|<R_*$, there is unique point $\pi x$ such that $|x-\pi x|=\left|d(x)\right|$.
 The supremum of such $R_*$ is called the reach of $\Omega$ and $\Omega^c$.
 Moreover, $d$ is $C^2$ in the $R_*$-neighborhood of $\Gamma$, i.e., $d \in C^2\left(\Gamma^{\mathbf{R}^n}_{R_*}\right)$ with
\[
	\Gamma^{\mathbf{R}^n}_{R_\ast} := \left\{ x \in \mathbf{R}^n \bigm|
	\left| d(x) \right| < R_\ast \right\};
\]
see \cite[Chap.\ 14, Appendix]{GT}, \cite[\S 4.4]{KP}.
Note that $R_*$ satisfies
\[
	R_* = \min \left( R^\Omega_*, R^{\Omega^c}_* \right),
\]
where $R^\Omega_*$ is the reach of $\Gamma$ in $\Omega$ while $R^{\Omega^c}_*$ is the reach of $\Gamma$ in the complement $\Omega^c$ of $\Omega$.
Let $K_\Gamma^\ast := \mathrm{max} \, \{ K_\Gamma, 1 \}$.
There exists $0 < \rho_0 < \min(r_*, \delta_*, \frac{R_*}{2}, \frac{1}{2n K_\Gamma^\ast})$ such that
\[
	U_\rho(z_0) := \left\{ x \in \mathbf{R}^n \bigm|
	(\pi x)' \in \operatorname{int} B_\rho(0'),\ 
	\left| d(x) \right| < \rho \right\}
\]
is contained in the coordinate chart $U_{r_*,\delta_*, h_{z_0}}(z_0)$ for any $\rho \leq\rho_0$.

We always take $\rho<\rho_0$.
Since $\Omega$ is bounded and
\[
	\bigcup_{z \in \Gamma} U_\rho(z)
\]
covers the compact set $K=\mathrm{cl}\left(\Gamma^{\mathbf{R}^n}_{\rho/2}\right)$, there exists a finite subcover $\left\{ U_\rho(z_j) \right\}^m_{j=1}$ of $K$, where the number $m$ depends on $\rho$.
For $\sigma > 0$, we denote that
\[
	\Omega^\sigma = \Omega \backslash \Gamma^{\mathbf{R}^n}_\sigma, \; \, U_{\sigma,j} := U_\sigma(z_j).
\]
Observe that
\[
	\overline{\Omega} \subset \bigcup^m_{j=1} U_{\rho,j} \cup \Omega^{\rho/2}.
\]
Let $\{ \varphi_j \}^m_{j=0}$ be a partition of the unity associated with $\{ U_{\rho,j}\} \cup \{\Omega^{\rho/2}\}$ in the sense that
\begin{align*}
	&\varphi_j \in C^\infty_c (U_{\rho,j} \cap \overline{\Omega}), \; \; 0 \leq \varphi_j \leq 1 \quad \text{for} \quad j=1, \ldots, m, \\
	&\varphi_0 \in C^\infty_c (\Omega^{\rho/2}), \; \; 0 \leq \varphi_0 \leq 1, \; \; \varphi_0 = 1 \quad \text{in} \quad \Omega^{\rho}
\end{align*}
and
\[
	\sum^m_{j=0} \varphi_j = 1 \quad \text{in} \quad \overline{\Omega}.
\]
Here $C^\infty_c(W)$ denotes the space of all smooth function in $W$ whose support is compact in $W$. 

Throughout this paper, unless otherwise specified, the symbol $C$ in an inequality represents a positive constant independent of quantities that appeared in the inequality. For a fixed $\rho>0$, $C_\rho$ represents a constant depending only on $\rho$. $C_n$ represents a constant depending only on $n$ and $C_{\Omega,n}$ represents a constant depending only on $\Omega$ and $n$.

\subsection{Cut-off and extension} 
\label{CUT}

In general, multiplication by a smooth function to $BMO$ is not bounded in $BMO$.
 Fortunately, our space is closed by multiplication.
%
\begin{proposition}[Multiplication] \label{2M}
Let $\Omega$ be a bounded $C^2$ domain in $\mathbf{R}^n$.
 Let $\varphi \in C^\gamma(\Omega)$, $\gamma\in(0,1)$.
 For each $v \in vBMO(\Omega)$, the function $\varphi v \in vBMO(\Omega)$ satisfies
\[
	\| \varphi v \|_{vBMO(\Omega)}
	\leq C\|\varphi\|_{C^\gamma(\Omega)}
	\|v\|_{vBMO(\Omega)}
\]
with $C$ independent of $\varphi$ and $v$.
\end{proposition}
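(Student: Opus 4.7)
The plan is to estimate the two seminorms $[\varphi v]_{BMO^\mu(\Omega)}$ and $[\nabla d_\Gamma \cdot(\varphi v)]_{b^\nu}$ separately. The boundary piece is immediate: pulling the sup-norm of $\varphi$ outside the integral,
\begin{align*}
r^{-n}\int_{\Omega\cap B_r(x)}\left|\nabla d_\Gamma\cdot(\varphi v)\right| dy
\leq \|\varphi\|_{L^\infty(\Omega)}\, r^{-n}\int_{\Omega\cap B_r(x)}\left|\nabla d_\Gamma\cdot v\right| dy,
\end{align*}
so $[\nabla d_\Gamma\cdot(\varphi v)]_{b^\nu}\leq\|\varphi\|_{L^\infty}[\nabla d_\Gamma\cdot v]_{b^\nu}$. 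Since $\|\varphi\|_{L^\infty}\leq\|\varphi\|_{C^\gamma(\Omega)}$, this part carries no essential content.

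For the $BMO^\mu$ piece, on any ball $B=B_r(x)\subset\Omega$ with $r<\mu$, I would use the standard identity
\begin{align*}
\varphi v-\varphi_B v_B=\varphi(v-v_B)+v_B(\varphi-\varphi_B)
\end{align*}
together with the elementary reduction $|B|^{-1}\int_B|\varphi v-(\varphi v)_B| dy\leq 2|B|^{-1}\int_B|\varphi v-\varphi_B v_B| dy$ to obtain
\begin{align*}
\frac{1}{|B|}\int_B\left|\varphi v-(\varphi v)_B\right| dy
\leq 2\|\varphi\|_{L^\infty}[v]_{BMO^\mu}+2|v_B|\cdot\frac{1}{|B|}\int_B|\varphi-\varphi_B| dy.
\end{align*}
The last integral is bounded by $C[\varphi]_{C^\gamma}r^\gamma$ by Hölder continuity of $\varphi$, so the task reduces to showing that $|v_B|\cdot r^\gamma$ stays uniformly bounded by a constant multiple of $\|v\|_{vBMO(\Omega)}$.

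To control $|v_B|$ I plan to fix once and for all a reference ball $B_0\subset\Omega$ of radius $r_0$ depending only on $\Omega$, and to connect a generic $B=B_r(x)\subset\Omega$ to $B_0$ by a chain of balls inside $\Omega$. Because $\Omega$ is a bounded $C^2$ domain this is possible with chain length $O(1+|\log(r/r_0)|)$, and the usual BMO telescoping gives
\begin{align*}
|v_B-v_{B_0}|\leq C(\Omega)\bigl(1+|\log(r/r_0)|\bigr)[v]_{BMO^\mu(\Omega)}.
\end{align*}
The anchor $|v_{B_0}|\leq |B_0|^{-1}\|v\|_{L^1(\Omega)}\leq C(\Omega)\|v\|_{vBMO(\Omega)}$ is furnished by the embedding $vBMO(\Omega)\hookrightarrow L^1(\Omega)$ recalled in the introduction via \cite[Theorem 9]{GigaGu2}. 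Since $r\leq\mathrm{diam}(\Omega)$, the factor $r^\gamma(1+|\log r|)$ is uniformly bounded, so $|v_B|\cdot r^\gamma\leq C(\Omega,\gamma)\|v\|_{vBMO(\Omega)}$, and the three bounds combine to the claimed estimate.

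The only genuinely non-routine step is this uniform control of $|v_B|$: for a generic BMO function the average over a small ball can grow logarithmically in $1/r$, so without the Hölder factor $r^\gamma$ the argument would fail. The mechanism that closes the loop is the $L^1$ embedding of $vBMO$ on bounded domains, which provides the absolute anchor $|v_{B_0}|\lesssim\|v\|_{vBMO(\Omega)}$ and eliminates the indeterminacy in the chain-of-balls telescoping.
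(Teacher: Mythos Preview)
Your proof is correct and takes a genuinely more elementary route than the paper's. The paper invokes \cite[Theorem~13]{GigaGu2} as a black box to obtain
\[
[\varphi v]_{BMO(\Omega)}\le c_1\|\varphi\|_{C^\gamma(\Omega)}\bigl([v]_{BMO(\Omega)}+\|v\|_{L^1(\Omega)}\bigr),
\]
and that result in turn rests on (i) a Jones-type extension of $v$ to a function in $bmo(\mathbf{R}^n)$ and (ii) the product estimate for H\"older functions times $bmo$ functions, proved by duality with the local Hardy space $h^1$ via \cite{Sa}. You bypass all of this machinery by working entirely inside $\Omega$: the decomposition $\varphi v-\varphi_Bv_B=\varphi(v-v_B)+v_B(\varphi-\varphi_B)$ isolates a term $|v_B|\,r^\gamma$, and you control $|v_B|$ by a Harnack-chain telescoping (valid since a bounded $C^2$ domain is NTA, hence John) together with the same $L^1$ embedding \cite[Theorem~9]{GigaGu2} that the paper also uses. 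The H\"older factor $r^\gamma$ kills the logarithmic growth of the chain length, which is exactly the point where mere boundedness of $\varphi$ would fail. What your approach buys is self-containment---no extension theorems, no Hardy-space duality---at the cost of writing out the chain argument; what the paper's approach buys is brevity, offloading the work to a general product lemma for $bmo$.
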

\begin{proof}
Since
\[
	\left[ \nabla d\cdot\varphi v \right]_{b^\nu}
	\leq \|\varphi\|_{L^\infty(\Omega)} \left[ \nabla d\cdot v \right]_{b^\nu},
\]
it suffices to establish the estimate
\begin{equation} \label{2MI}
	\left[ \varphi v \right]_{BMO(\Omega)}
	\leq c_0 \|\varphi\|_{C^\gamma(\Omega)} \|v\|_{vBMO(\Omega)}
\end{equation}
with $c_0$ independent of $\varphi$ and $v$.
Since a bounded Lipschitz domain is a uniform domain, we are able to apply \cite[Theorem 13]{GigaGu2} to get
\[
	\left[ \varphi v \right]_{BMO(\Omega)}
	\leq c_1 \|\varphi\|_{C^\gamma(\Omega)}
	( [v]_{BMO(\Omega)} + \|v\|_{L^1(\Omega)} ).
\]
This is based on the product estimate of a H\"older function and a function in $bmo(\mathbf{R}^n) := BMO(\mathbf{R}^n)\cap L^1_\mathrm{ul}(\mathbf{R}^n)$ where
\[
	L^1_{\mathrm{ul}}(\mathbf{R}^n) := 
	\bigg\{ f \in L^1_{\mathrm{loc}} (\mathbf{R}^n) \biggm| 
	\| f \|_{L^1_{\mathrm{ul}}(\mathbf{R}^n)} := \sup_{x \in \mathbf{R}^n} \int_{B_1(x)} \bigl| f(y) \bigr| \, dy < \infty \bigg\}.
\]
The space $bmo(\mathbf{R}^n)$ is equipped with the norm
\[
\| f \|_{bmo(\mathbf{R}^n)}:= [f]_{BMO(\mathbf{R}^n)} + \| f \|_{L_{\mathrm{ul}}^1(\mathbf{R}^n)}
\]
for $f \in bmo(\mathbf{R}^n)$.
The product estimate for $bmo$ follows from a similar result for a local Hardy space $h^1=F^0_{1,2}$ \cite[Remark 4.4]{Sa} and duality $bmo=(h^1)'$ \cite[Theorem 3.26]{Sa}.
To handle a function in $\Omega$, we need an extension to conclude \cite[Theorem 13]{GigaGu2}.
Fortunately, by the characterization of $vBMO$ for a bounded $C^2$ domain \cite[Theorem 9]{GigaGu2},
\[
	\|v\|_{L^1(\Omega)} \leq c_2 \|v\|_{vBMO(\Omega)}.
\]
Here $c_j$ denotes a constant independent of $v$ and $\varphi$ for $j=1,2$.
Combining these two estimates, we obtain \eqref{2MI} with $c_0 = c_1(1+c_2)$.
This yields Proposition \ref{2M}. 
\end{proof}
%

For a bounded $C^3$ domain, we next consider an extension based on the normal coordinate in $U_\rho(z_0)$ for $\rho\leq\rho_0$ of the form
\begin{eqnarray} \label{NCC}
\left\{
\begin{array}{lcl}
x' &=& \eta' + \eta_n \nabla'd ( \eta', h_{z_0}(\eta') ); \\
x_n &=& h_{z_0}(\eta') + \eta_n \partial_{x_n} d ( \eta', h_{z_0}(\eta') ).
\end{array}
\right.
\end{eqnarray}
Let $V_{\sigma} := B_\sigma(0') \times (- \sigma, \sigma)$ for $\sigma \in (0,\rho_0)$. We shall write this coordinate change by $x=\psi(\eta)$ with $\psi \in C^2(V_{\rho_0})$ and
\[
	x = \pi x - d(x) \mathbf{n}(\pi x), \quad
	\mathbf{n}(\pi x) = -\nabla d(\pi x).
\]
We consider the projection to the direction to $\nabla d$. For $x \in \Gamma_{\rho_0}^{\mathbf{R}^n}$, we set
\[
	P(x) = \nabla d(\pi x) \otimes \nabla d(\pi x) = \mathbf{n}(\pi x) \otimes \mathbf{n}(\pi x).
\]
For later convenience, we set $Q(x) = I - P(x)$ which is the tangential projection for $x \in \Gamma_{\rho_0}^{\mathbf{R}^n}$.
For a function $f$ in $\Gamma_\rho^{\mathbf{R}^n} \cap \overline{\Omega}$, let $f_{\mathrm{even}}$ (resp.\ $f_{\mathrm{odd}}$) denote its even (odd) extension to $\Gamma_\rho^{\mathbf{R}^n}$ defined by
\begin{align*}
	f_{\mathrm{even}} \left( \pi x + d(x)\mathbf{n}(\pi x) \right)
	&= f \left( \pi x - d(x)\mathbf{n}(\pi x) \right)
	&\text{for}\quad x \in \Gamma_\rho^{\mathbf{R}^n} \backslash \overline{\Omega}, \\
	f_{\mathrm{odd}} \left( \pi x + d(x)\mathbf{n}(\pi x) \right)
	&= -f \left( \pi x - d(x)\mathbf{n}(\pi x) \right)
	&\text{for}\quad x \in \Gamma_\rho^{\mathbf{R}^n} \backslash \overline{\Omega}.
\end{align*}
We denote $r_W$ to be the restriction in $W$ for any subset $W \subset \mathbf{R}^n$. Let $f$ be a function (or a vector field) defined in $V_\sigma$ for some $\sigma \in (0,\infty]$. We set $E_{\mathrm{even}} f$ to be the even extension of $f$ in $V_\sigma \cap \mathbf{R}_+^n$ to $V_\sigma$ with respect to the $n$-th variable, i.e.,
\[
E_{\mathrm{even}} f (\eta', - \eta_n) = f(\eta', \eta_n)
\]
for any $(\eta', \eta_n) \in V_\sigma \cap \mathbf{R}_+^n$.

For $v\in vBMO(\Omega)$ with $\operatorname{supp} \, v \subset U_\rho(z_0) \cap \overline{\Omega}$, let $\overline{v}$ be its extension of the form
\begin{align} \label{SE}
	\overline{v}(x) := (P v_{\mathrm{odd}}) (x) + (Q v_{\mathrm{even}}) (x)
\end{align}
for $x \in U_{\rho}(z_0)$. Notice that $\operatorname{supp} \, \overline{v} \subset U_{\rho}(z_0)$, $\overline{v}$ is indeed defined in $\mathbf{R}^n$ with $\overline{v}(x) = 0$ for any $x \in U_\rho(z_0)^{\mathrm{c}}$. Define 
\[
L_\ast := \underset{z_0 \in \Gamma, \, \rho \leq \rho_0}{\sup} \, \mathrm{max} \, \{ \| \nabla \psi \|_{L^\infty(V_\rho)} + \| \nabla \psi^{-1} \|_{L^\infty(U_{\rho}(z_0))}, 1 \}. 
\]
Since the boundary $\Gamma$ is uniformly $C^3$, $L_\ast$ is finite that depends on $\Omega$ only. We set $\rho_{0,\ast} = \rho_0/12L_\ast$.
\begin{proposition} \label{2E}
Let $\Omega \subset \mathbf{R}^n$ be a bounded $C^2$ domain, $z_0 \in \Gamma$ and $\rho \in (0, \rho_{0,\ast})$.
There exists a constant $C_\rho$, which depends on $\rho$ only, such that
\begin{align*}
	[\overline{v}]_{BMO\left( \mathbf{R}^n \right)} &\leq C_\rho \|v\|_{vBMO(\Omega)}, \\
	[\nabla d\cdot\overline{v}]_{b^\nu(\Gamma)} &\leq C_\rho \|v\|_{vBMO(\Omega)}
\end{align*}
for all $v \in vBMO(\Omega)$ with $\operatorname{supp} \, v \subset U_\rho(z_0) \cap \overline{\Omega}$ and $\nu>0$.
\end{proposition}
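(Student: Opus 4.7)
The strategy is to flatten the boundary via the normal coordinate chart $\psi: V_{\rho_0}\to U_{\rho_0}(z_0)$ and then invoke the classical even/odd extension lemmas for $BMO$ on a half-space. Under $\psi$ the piece $\Gamma\cap U_{\rho_0}(z_0)$ becomes $\{\eta_n=0\}\cap B_{\rho_0}(0')$, the interior side of $\Omega$ corresponds to (say) $\{\eta_n>0\}$, the reflection $x\mapsto \pi x-d(x)\mathbf n(\pi x)$ becomes $\eta\mapsto(\eta',-\eta_n)$, and the projections $P(x)$, $Q(x)=I-P(x)$ pull back to matrix-valued functions $\widetilde P(\eta'), \widetilde Q(\eta')$ that depend only on $\eta'$ and lie in $C^1(\overline{B_{\rho_0}(0')})$, since $d\in C^2(\Gamma^{\mathbf R^n}_{R_\ast})$. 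Because $\psi\in C^2$ is bi-Lipschitz with constant $L_\ast$, both the $BMO$ and $b^\nu$ seminorms transfer under $\psi$ with multiplicative constants depending only on $L_\ast$ and $\rho$. The choice $\rho<\rho_{0,\ast}=\rho_0/12L_\ast$ provides enough margin that $\psi^{-1}$ maps every ball in $U_\rho(z_0)$ well inside $V_{\rho_0}$.

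\textbf{Half-space reduction.} Set $\widetilde v:=v\circ\psi$ on $V_\rho^+:=V_\rho\cap\{\eta_n>0\}$ and consider the half-space extensions
\[
	\widetilde v^{\mathrm{tan}}(\eta):=\widetilde Q(\eta')\,\widetilde v(\eta',|\eta_n|),\qquad
	\widetilde v^{\mathrm{nor}}(\eta):=\operatorname{sgn}(\eta_n)\,\widetilde P(\eta')\,\widetilde v(\eta',|\eta_n|),
\]
so that $\overline v\circ\psi=\widetilde v^{\mathrm{tan}}+\widetilde v^{\mathrm{nor}}$ on $V_\rho$. For the tangential part, a standard ball-splitting argument shows that even reflection preserves $BMO$ up to a universal constant; combined with Proposition \ref{2M} to absorb multiplication by the H\"older (indeed $C^1$) matrix $\widetilde Q(\eta')$, one obtains $[\widetilde v^{\mathrm{tan}}]_{BMO(V_\rho)}\le C_\rho\|v\|_{vBMO(\Omega)}$.

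\textbf{The main obstacle: the odd extension.} For $\widetilde v^{\mathrm{nor}}$ the estimate on balls $B_r(\eta_0)$ disjoint from $\{\eta_n=0\}$ is identical to the even case, but for balls centered at a point $\eta_0=(\eta_0',0)$ oddness forces $(\widetilde v^{\mathrm{nor}})_{B_r}=0$, so the oscillation reduces to
\[
	\frac{1}{|B_r|}\int_{B_r(\eta_0)}\bigl|\widetilde v^{\mathrm{nor}}\bigr|\,d\eta \le \frac{C}{r^n}\int_{B_r(\eta_0)\cap V_\rho^+}\bigl|\widetilde P\,\widetilde v\bigr|\,d\eta.
\]
Pulling back via $\psi$ and using $|\widetilde P\widetilde v|=|(\nabla d\cdot v)\circ\psi|$ (because $|\nabla d|=1$), the right-hand side is bounded by $C_\rho[\nabla d\cdot v]_{b^\nu(\Gamma)}$ when $r\le c\nu$, and by an $L^1$ term otherwise. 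This is exactly where the introduction of $vBMO$ pays off: the jump of the odd-reflected normal part across the flattened boundary is not controlled by the $BMO$ seminorm alone, so the supplementary $b^\nu$-seminorm on $\nabla d\cdot v$ is indispensable. Proposition \ref{2M} once more absorbs the multiplication by $\widetilde P(\eta')$.

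\textbf{Return and conclusion.} Transporting back to $x$-coordinates gives $\overline v$ on $U_\rho(z_0)$, and zero extension to $\mathbf R^n$ is harmless because $\operatorname{supp}\overline v\subset U_\rho(z_0)$: balls meeting $\partial U_\rho(z_0)$ contribute at most $r^{-n}\|\overline v\|_{L^1}\le C\|v\|_{L^1(\Omega)}\le C'\|v\|_{vBMO(\Omega)}$, where the last inequality uses the $vBMO$ characterization on a bounded $C^2$ domain from \cite[Theorem 9]{GigaGu2}. The $b^\nu$ bound is then immediate because $\overline v=v$ on $\Omega$, whence $[\nabla d\cdot\overline v]_{b^\nu(\Gamma)}=[\nabla d\cdot v]_{b^\nu(\Gamma)}\le\|v\|_{vBMO(\Omega)}$.
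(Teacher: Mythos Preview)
Your approach is essentially the same as the paper's: flatten via the normal coordinate chart, invoke the half-space even/odd extension lemmas for $BMO$ from \cite{GigaGu}, use $L^1$ control to handle balls that escape the chart, and transfer back. Two remarks are worth making.

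First, your appeal to Proposition~\ref{2M} for the multiplication by $\widetilde P,\widetilde Q$ is not quite on target: that proposition concerns $vBMO(\Omega)$, not $BMO$ on the flattened patch $V_\rho$. What you actually need (and what the paper invokes directly) is the underlying product estimate for $bmo=BMO\cap L^1_{\mathrm{ul}}$ with H\"older multipliers, together with a $C^1$ extension of $\nabla d$ off $\Gamma^{\mathbf R^n}_{\rho_0}$ so that the multiplier is globally defined. The paper routes this through the auxiliary function $(Pv)_E:=(D_e\cdot v_{\mathrm{even}})D_e$ before applying the odd-extension lemma; your order (extend first, then multiply by $\widetilde P(\eta')$) also works since $\widetilde P$ is independent of $\eta_n$, but the multiplication step must be justified via the $bmo$ product rule, not via Proposition~\ref{2M}.

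Second, your final observation for the $b^\nu$ bound---that $\overline v=v$ on $\Omega$, whence $[\nabla d\cdot\overline v]_{b^\nu(\Gamma)}=[\nabla d\cdot v]_{b^\nu(\Gamma)}\le\|v\|_{vBMO(\Omega)}$---is correct and in fact simpler than the paper's argument, which establishes the stronger bound with the integral taken over the full ball $B_r(x)$ (rather than $B_r(x)\cap\Omega$) via a change of variables.
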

In the normal coordinate, $P\overline{v} = P v_{\mathrm{odd}}$ is odd in $\eta_n$ and $Q\overline{v} = Q v_{\mathrm{even}}$ is even in $\eta_n$. The key idea of proving this proposition is to reduce the problem to the case where the boundary is locally flat by invoking the normal coordinate.
%
\begin{proof}
Since $vBMO(\Omega) \subset L^1(\Omega)$, see e.g. \cite[Theorem 9]{GigaGu2}, by considering the normal coordinate change $y = \psi(\eta)$ in $U_{\rho}(z_0)$ we can deduce that $v_{\mathrm{even}}, v_{\mathrm{odd}} \in L^1(\mathbf{R}^n)$ satisfying
\[
\| v_{\mathrm{even}} \|_{L^1(\mathbf{R}^n)} = \| v_{\mathrm{odd}} \|_{L^1(\mathbf{R}^n)} \leq 2 L_\ast^2 \| v \|_{L^1(\Omega)}.
\]
Hence $\overline{v} \in L^1(\mathbf{R}^n)$ satisfies the estimate $\| \overline{v} \|_{L^1(\mathbf{R}^n)} \leq C_{\Omega,n} \| v \|_{L^1(\Omega)}$. Since $\Omega$ is a uniform domain, by \cite[Theorem 1]{PJ} there exists $v_J \in BMO(\mathbf{R}^n)$ with $r_\Omega v_J = v$ and 
\[
[v_J]_{BMO(\mathbf{R}^n)} \leq C_{\Omega,n} [v]_{BMO^\infty(\Omega)}.
\]

Suppose that $B_r(\zeta) \subset V_{4 \rho L_\ast}^+ := V_{4 \rho L_\ast} \cap \mathbf{R}_+^n$. The mean value theorem implies that $\psi(B_r(\zeta)) \subset B_{L_\ast r}(x)$ with $x = \psi(\zeta)$. By change of variables $y = \psi(\eta)$ in $U_{4 \rho L_\ast}(z_0)$, we see that
\begin{align*}
\frac{1}{|B_r(\zeta)|} \int_{B_r(\zeta)} | v \circ \psi (\eta) - c | \, d \eta &\leq L_\ast \cdot \frac{1}{|B_r(\zeta)|} \int_{\psi(B_r(\zeta))} | v(y) - c | \, dy \\
&\leq C_n L_\ast^{n+1} \cdot \frac{1}{|B_{L_\ast r}(x)|} \int_{B_{L_\ast r}(x)} | v_J (y) -c | \, dy
\end{align*}
for any constant vector $c \in \mathbf{R}^n$. By considering an equivalent definition of the $BMO$-seminorm, see e.g. \cite[Proposition 3.1.2]{Gra}, we deduce that
\[
[v \circ \psi]_{BMO^\infty(V_{4 \rho L_\ast}^+)} \leq C_{\Omega,n} [v]_{BMO^\infty(\Omega)}.
\]
By recalling the results concerning the even extension of $BMO$ functions in the half space, see \cite[Lemma 3.2]{GigaGu} and \cite[Lemma 3.4]{GigaGu}, we can deduce that
\begin{align} \label{EBE}
[ v_{\mathrm{even}} \circ \psi ]_{BMO^\infty(V_{4 \rho L_\ast})} \leq C_{\Omega,n} [v]_{BMO^\infty(\Omega)}.
\end{align}

Next, we shall estimate the $BMO$-seminorm of $v_{\mathrm{even}}$. Let $B_r(x)$ be a ball with radius $r \leq \frac{\rho}{2}$. If either $B_r(x) \cap U_\rho(z_0) = \emptyset$ or $B_r(x) \subset \Omega$, there is nothing to prove. It is sufficient to consider $B_r(x)$ that intersects both $U_\rho(z_0)$ and $\Omega^{\mathrm{c}}$. In this case we can find $x_0 \in B_r(x) \cap U_\rho(z_0)$. Since $B_r(x) \subset B_{2r}(x_0) \subset B_{4 \rho}(z_0) \subset U_{8 \rho}(z_0)$, by considering change of variables $y = \psi(\eta)$ in $U_{8 \rho}(x_0)$, we have that
\[
\frac{1}{|B_r(x)|} \int_{B_r(x)} | v_{\mathrm{even}} (y) - c | \, dy \leq \frac{L_\ast}{|B_r(x)|} \int_{\psi^{-1}(B_r(x))} | v_{\mathrm{even}} \circ \psi (\eta) - c | \, d\eta.
\]
For any $y \in B_r(x)$, we have that $| y - z_0 | < 4 \rho$. Hence $\psi^{-1}(B_r(x)) \subset B_{L_\ast r}(\zeta) \subset B_{4 \rho L_\ast}(0) \subset V_{4 \rho L_\ast}$. By (\ref{EBE}), we deduce that
\[
\frac{1}{|B_r(x)|} \int_{B_r(x)} | v_{\mathrm{even}} (y) - (v_{\mathrm{even}})_{B_r(x)} | \, dy \leq C_{\Omega,n} [v]_{BMO^\infty(\Omega)}.
\]
Thus, we obtain that
\[
[v_{\mathrm{even}}]_{BMO^{\frac{\rho}{2}}(\mathbf{R}^n)} \leq C_{\Omega,n} [v]_{BMO^\infty(\Omega)}.
\]
For a ball $B$ with radius $r(B) > \frac{\rho}{2}$, a simple triangle inequality implies that
\[
\frac{1}{|B|} \int_B | v_{\mathrm{even}} (y) - (v_{\mathrm{even}})_B | \, dy \leq \frac{2}{|B|} \int_B | v_{\mathrm{even}} (y) | \, dy \leq \frac{C_n}{\rho^n} \| v_{\mathrm{even}} \|_{L^1(\mathbf{R}^n)}.
\]
Therefore, we obtain the $BMO$ estimate for $v_{\mathrm{even}}$, i.e.,
\[
[v_{\mathrm{even}}]_{BMO(\mathbf{R}^n)} \leq \frac{C_{\Omega,n}}{\rho^n} \| v \|_{vBMO(\Omega)}.
\]

We shall then give the $BMO$ estimate for $P v_{\mathrm{odd}}$. Since $\nabla d \in C^1(\Gamma_{\rho_0}^{\mathbf{R}^n})$, there exists $D_e \in C^1(\mathbf{R}^n)$ such that $\| D_e \|_{C^1(\mathbf{R}^n)} \leq \| \nabla d \|_{C^1(\Gamma_{\rho_0}^{\mathbf{R}^n})}$ and $r_{\Gamma_{\rho_0}^{\mathbf{R}^n}} D_e = \nabla d$, see the proof of \cite[Theorem 13]{GigaGu2}. By the multiplication rule for $bmo$ functions, we have that $(P v)_E := (D_e \cdot v_{\mathrm{even}} ) D_e \in bmo(\mathbf{R}^n)$, see also \cite[Theorem 13]{GigaGu2}. Consider the normal coordinate change in $U_{4 \rho L_\ast}(z_0)$. Since $(P v)_E = Pv$ in $U_{4 \rho L_\ast}(z_0) \cap \Omega$, same argument in the second paragraph implies that
\[
[P v \circ \psi]_{BMO^\infty(V_{4 \rho L_\ast}^+)} \leq C_{\Omega,n} \| (P v)_E \|_{bmo(\mathbb{R}^n)} \leq \frac{C_{\Omega,n}}{\rho^n} \| v \|_{vBMO(\Omega)}.
\]
Let $\zeta \in V_{12 \rho L_\ast} = \psi^{-1}(U_{12 \rho L_\ast}(z_0))$ with $\zeta_n = 0$. Let $B_r(\zeta) \subset V_{12 \rho L_\ast}$ and $x = \psi(\zeta)$. Since $F(B_r(\zeta) \cap V_{12 \rho L_\ast}^+) \subset B_{L_\ast r}(x) \cap \Omega$, by considering change of variables $y = \psi(\eta)$ in $U_{12 \rho L_\ast}(z_0)$, we can deduce that
\begin{align} \label{LBNE}
\frac{1}{|B_r(\zeta)|} \int_{B_r(\zeta) \cap V_{12 \rho L_\ast}^+} | P v_{\mathrm{odd}} \circ \psi (\eta) | \, d\eta \leq L_\ast^{n+1} [\nabla d \cdot v]_{b^\nu}.
\end{align}
Recall the results concerning the odd extension of $BMO$ functions in the half space, see \cite[Lemma 3.1]{GigaGu}, we have the estimate
\begin{align} \label{OBE}
[P v_{\mathrm{odd}} \circ \psi ]_{BMO^\infty(V_{4 \rho L_\ast})} \leq \frac{C_{\Omega,n}}{\rho^n}  \| v \|_{vBMO(\Omega)}.
\end{align}
By considering (\ref{OBE}) and the fact that $P v_{\mathrm{odd}} = (P v)_E$ in $\Omega$, same argument in the third paragraph implies the $BMO$ estimate for $P v_{\mathrm{odd}}$, i.e.,
\[
[P v_{\mathrm{odd}}]_{BMO(\mathbf{R}^n)} \leq \frac{C_{\Omega,n}}{\rho^n} \| v \|_{vBMO(\Omega)}.
\]
 
Combining the $BMO$ estimates for $v_{\mathrm{even}}$ and $P v_{\mathrm{odd}}$, we have that
\[
[\overline{v}]_{BMO(\mathbf{R}^n)} \leq \frac{C_{\Omega,n}}{\rho^n} \| v \|_{vBMO(\Omega)}.
\]
Notice that $\nabla d \cdot \overline{v} = v_{\mathrm{odd}} \cdot \nabla d$ in $\mathbf{R}^n$. Let $x \in \Gamma$ and $r \leq \frac{\rho}{L_\ast}$. If $B_r(x) \cap U_\rho(z_0) = \emptyset$, then $v_{\mathrm{odd}} = 0$ in $B_r(x)$. Suppose that $B_r(x) \cap U_\rho(z_0) \neq \emptyset$. Then we can find $x_0 \in B_r(x) \cap U_\rho(z_0) \cap \Gamma$. Let $\zeta_0 = \psi^{-1}(x_0)$, we have that $\psi^{-1}(B_r(x)) \subset B_{2 L_\ast r}(\zeta_0) \subset V_{12 \rho L_\ast}$. Hence, 
\begin{align*}
r^{-n} \int_{B_r(x)} | v_{\mathrm{odd}} \cdot \nabla d | \, dy &\leq \frac{2 L_\ast}{r^n} \int_{B_{2 L_\ast r}(\zeta_0) \cap V_{12 \rho L_\ast}^+} | ( v \cdot \nabla d ) \circ \psi | \, d\eta \\
&\leq \frac{2 L_\ast^2}{r^n} \int_{B_{2 L_\ast^2 r}(x_0) \cap \Omega} | \nabla d \cdot v | \, dy \leq C_{\Omega,n} [\nabla d \cdot v]_{b^\nu}.
\end{align*}
For $r > \frac{\rho}{L_\ast}$, we simply have that
\[
r^{-n} \int_{B_r(x)} | v_{\mathrm{odd}} \cdot \nabla d | \, dy \leq \frac{C_{\Omega,n}}{\rho^n} \| v_{\mathrm{odd}} \|_{L^1(\mathbf{R}^n)} \leq \frac{C_{\Omega,n}}{\rho^n} \| v \|_{vBMO(\Omega)}.
\]
\end{proof}

\subsection{Volume potentials} 
\label{VOL}

To construct mapping $v\mapsto q_1$ in Theorem \ref{CSV}, for some $\rho_\ast$ to be determined later in the next section, we localize $v$ by using the partition of the unity $\{\varphi_j\}^m_{j=0}$ associated with the covering
\[
	\{ U_{\rho,j} \}^m_{j=1} \cup \Omega^{\rho/2}
\]
as in Section \ref{LOC}, where $\rho$ is always assumed to satisfy $\rho \leq \rho_\ast/2$.
Here and hereafter we always assumed that $\Omega$ is a bounded $C^3$ domain in $\mathbf{R}^n$.
\begin{proposition} \label{2V}
There exists a constant $C_\rho$, which depends on $\rho$ only, such that
\begin{align*}
	[ \nabla q^1_1 ]_{BMO^\infty(\mathbf{R}^n)}
	& \leq C_\rho \|v\|_{vBMO(\Omega)}, \\
	\| \nabla q^1_1(x) \|_{L^\infty(\Gamma_{\rho/4}^{\mathbf{R}^n})}
	& \leq C_\rho \|v\|_{vBMO(\Omega)}
\end{align*}
for $q^1_1= E*\operatorname{div} \, (\varphi_0 v)$ and $v \in vBMO(\Omega)$.
In particular, 
\[
	\left[ \nabla q^1_1 \right]_{b^\nu(\Gamma)}
	\leq C_\rho \| v \|_{vBMO(\Omega)}
\]
for $\nu < \rho/4$.
\end{proposition}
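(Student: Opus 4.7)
The plan is to split the argument into three pieces matching the three claims, with the $BMO$ estimate providing the main input and the remaining statements following by elementary manipulation.

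For the $BMO$ bound, the operator $f\mapsto\nabla(E\ast\operatorname{div}f)$ is, up to signs, the matrix of Riesz transforms $R_iR_j$ acting componentwise; this is a classical Calder\'on-Zygmund operator bounded on $BMO(\mathbf{R}^n)$ by Fefferman-Stein \cite{FS}, giving
\[
	[\nabla q_1^1]_{BMO^\infty(\mathbf{R}^n)} \leq C_0 [\varphi_0 v]_{BMO(\mathbf{R}^n)},
\]
where $\varphi_0 v$ is understood as its zero-extension from $\Omega$ to $\mathbf{R}^n$. It remains to control the right-hand side. Proposition \ref{2M} applied to the smooth (hence H\"older) cutoff $\varphi_0$ already yields $\|\varphi_0 v\|_{vBMO(\Omega)}\leq C_\rho\|v\|_{vBMO(\Omega)}$. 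To pass from $\Omega$ to $\mathbf{R}^n$, I would split into small and large balls: since $\operatorname{supp}(\varphi_0 v)\subset\Omega^{\rho/2}$ sits at distance $\geq\rho/2$ from $\Gamma$, every ball of radius $r<\rho/4$ that meets the support is automatically contained in $\Omega$, so its oscillation coincides with the $BMO(\Omega)$ oscillation; for balls of radius $r\geq\rho/4$ the triangle inequality gives $|B|^{-1}\int_B|\varphi_0 v-(\varphi_0 v)_B|\,dy\leq C_\rho\|\varphi_0 v\|_{L^1(\mathbf{R}^n)}$, and the inclusion $vBMO(\Omega)\subset L^1(\Omega)$ from \cite[Theorem 9]{GigaGu2} closes the estimate.

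For the pointwise bound near $\Gamma$, the compact support of $\varphi_0 v$ lets me transfer derivatives off $E$ via the distributional pairing to obtain
\[
	\nabla q_1^1(x) = \int_{\mathbf{R}^n} (\nabla^2 E)(x-y)(\varphi_0 v)(y)\,dy
\]
for every $x$ outside $\operatorname{supp}(\varphi_0 v)$, where the integrand is harmless because the kernel is smooth off the diagonal. For $x\in\Gamma_{\rho/4}^{\mathbf{R}^n}$ and $y\in\operatorname{supp}\varphi_0\subset\Omega^{\rho/2}$, the $1$-Lipschitz property of $d$ gives $|x-y|\geq d(y)-d(x)\geq\rho/2-\rho/4=\rho/4$, so $|\nabla^2 E(x-y)|\leq C|x-y|^{-n}\leq C_\rho$. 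Combining this with $\|v\|_{L^1(\Omega)}\leq C\|v\|_{vBMO(\Omega)}$ yields the desired uniform bound $|\nabla q_1^1(x)|\leq C_\rho\|v\|_{vBMO(\Omega)}$ on $\Gamma_{\rho/4}^{\mathbf{R}^n}$.

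The $b^\nu$ bound is then immediate: for $x\in\Gamma$ and $r<\nu<\rho/4$ one has $B_r(x)\subset\Gamma_{\rho/4}^{\mathbf{R}^n}$, so integrating the $L^\infty$ bound over $\Omega\cap B_r(x)$ gives $r^{-n}\int_{\Omega\cap B_r(x)}|\nabla q_1^1|\,dy\leq C_\rho\|v\|_{vBMO(\Omega)}$. The only mildly technical point is the zero-extension step in the first part, and it works cleanly precisely because $\varphi_0$ is supported away from $\Gamma$; this clean separation from the boundary is the whole reason the ``interior'' piece $q_1^1$ is so much easier than the boundary pieces $q_1^2,q_1^3$ treated in Section \ref{sec:2}.
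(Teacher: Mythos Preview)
Your argument is correct. The $BMO$ step matches the paper's use of the Fefferman--Stein $BMO$--$BMO$ estimate; your hands-on justification of the zero-extension (small balls sit inside $\Omega$, large balls controlled by $L^1$) is a direct version of what the paper obtains by citing the $bmo$ extension theorem \cite[Theorem 12]{GigaGu2}.

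For the $L^\infty$ bound on $\Gamma_{\rho/4}^{\mathbf{R}^n}$ you take a genuinely different route from the paper. The paper exploits that $\nabla q_1^1$ is harmonic in $\Gamma_{\rho/2}^{\mathbf{R}^n}$, applies the mean value property over $B_{\rho/4}(x)$, bounds the ball average by $\|\nabla q_1^1\|_{L^2(\mathbf{R}^n)}$ via H\"older, and then controls this $L^2$ norm by $\|\varphi_0 v\|_{L^2}$ and interpolation between $L^1$ and $BMO$. Your approach is more direct: you write $\nabla q_1^1(x)=\int(\nabla^2 E)(x-y)(\varphi_0 v)(y)\,dy$ for $x$ away from the support, use the separation $|x-y|\geq\rho/4$ to get a pointwise kernel bound $|\nabla^2 E(x-y)|\leq C_\rho$, and finish with the $L^1$ embedding. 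This is shorter and avoids the $L^2$ theory and interpolation entirely. The paper's mean-value route, on the other hand, uses only that $\nabla q_1^1$ is harmonic in the strip and that the operator is $L^2$-bounded, so it would transfer more readily to situations where the explicit kernel decay is not available (as happens later in Section~\ref{sec:2} with the perturbed operators). For the present statement, your argument is the cleaner one.
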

%
%
\begin{proof}
By the $BMO$-$BMO$ estimate \cite{FS}, we have the estimate
\[
	\left[ \nabla q^1_1 \right]_{BMO(\mathbf{R}^n)}
	\leq C [\varphi_0 v]_{BMO(\mathbf{R}^n)}.
\]
Consider $x \in \Gamma_{\rho/4}^{\mathrm{R}^n}$. Since $\nabla q_1^1$ is harmonic in $\Gamma_{\rho/2}^{\mathrm{R}^n}$ and $B_{\frac{\rho}{4}}(x) \subset \Gamma_{\rho/2}^{\mathrm{R}^n}$, the mean value property for harmonic functions implies that
\[
\nabla q_1^1 (x) = \frac{C_n}{\rho^n} \int_{B_{\frac{\rho}{4}}(x)} \nabla q_1^1 (y) \, dy.
\]
By H$\ddot{\text{o}}$lder's inequality, we can estimate $| \nabla q_1^1 (x) |$ by $\frac{C_n}{\rho^{n/2}} \| \nabla q_1^1 \|_{L^2(\mathbf{R}^n)}$. Since the convolution with $\nabla^2 E$ is bounded in $L^p$ for any $1<p<\infty$, see e.g. \cite[Theorem 5.2.7 and Theorem 5.2.10]{Gra}, an interpolation inequality (cf. \cite[Lemma 5]{BGST}) implies that
\begin{align*}
\| \nabla q_1^1 \|_{L^2(\mathbf{R}^n)} \leq C \| \varphi_0 v \|_{L^2(\mathbf{R}^n)} \leq C \| \varphi_0 v \|_{L^1(\mathbf{R}^n)}^{\frac{1}{2}} [ \varphi_0 v ]_{BMO(\mathbf{R}^n)}^{\frac{1}{2}}.
\end{align*}
View $\varphi_0 v$ as the extension of $\varphi_0 v$ from $\Omega$ to $\mathbf{R}^n$. By the extension theorem for $bmo$ functions \cite[Theorem 12]{GigaGu2}, we estimate $[ \varphi_0 v ]_{BMO(\mathbf{R}^n)}$ by $C_\rho [ \varphi_0 v ]_{BMO^\infty(\Omega)}$. Since $vBMO(\Omega) \subset L^1(\Omega)$, see \cite[Theorem 9]{GigaGu2}, Proposition \ref{2M} implies that
\[
	| \nabla q^1_1 (x) |
	\leq C_\rho \|v\|_{vBMO(\Omega)}
\]
for any $x \in \Gamma_{\rho/4}^{\mathrm{R}^n}$.
\end{proof}

We next set $v_1 := \varphi_0 v$ and $v_2 := 1-v_1$.
For each $\varphi_j v_2$ ($j=1,.,m$), we extend as in Proposition \ref{2E} to get $\overline{\varphi_j v_2}$ and set
\[
	\overline{v_2} := \sum^m_{j=1} \overline{\varphi_j v_2}.
\]
Indeed, this extension is independent of the choice $\varphi_j$'s but we do not use this fact.
 We next set
\[
	\overline{v_2}^{\tan} := Q \, \overline{v_2} = \sum_{j=1}^m Q \, (\varphi_j v_2)_{\mathrm{even}}.
\]
For $1 \leq j \leq m$, $\varphi_j \in C_{\mathrm{c}}^\infty(U_{\rho,j} \cap \overline{\Omega})$ implies that the even extension of $\varphi_j$ in $U_{\rho,j}$ with respect to $\Gamma$ is H$\ddot{\text{o}}$lder continuous in the sense that $(\varphi_j)_{\mathrm{even}} \in C^{0,1}(U_{\rho,j})$. Moreover, we have that $(\varphi_j)_{\mathrm{even}} \in C^{0,1}(\mathbf{R}^n)$ satisifes 
\[
\| (\varphi_j)_{\mathrm{even}} \|_{C^{0,1}(\mathbf{R}^n)} \leq C_\rho \| (\varphi_j)_{\mathrm{even}} \|_{C^{0,1}(U_{\rho,j})}.
\]

For simplicity of notations, we denote $Q \, (\varphi_j v_2)_{\mathrm{even}}$ by $w_j^{\mathrm{tan}}$ for every $1 \leq j \leq m$. Now, we are ready to construct the suitable potential corresponding to $\overline{v_2}^{\mathrm{tan}}$.
\begin{proposition} \label{VPT}
There exists $\rho_\ast > 0$ such that if $\rho<\rho_\ast/2$, then for every $1 \leq j \leq m$, there exists a linear operator $v \longmapsto p_j^{\mathrm{tan}}$ from $vBMO(\Omega)$ to $L^\infty(\mathbf{R}^n)$ such that
\[
- \Delta p_j^{\mathrm{tan}} = \operatorname{div} w_j^{\mathrm{tan}} \; \; \text{in} \; \; U_{2 \rho,j} \cap \Omega
\]
and that there exists a constant $C_\rho$, independent of $v$, such that
\begin{align*}
	[\nabla p_j^{\mathrm{tan}}]_{BMO(\mathbf{R}^n)} &\leq C_{\rho} \|v\|_{vBMO(\Omega)}, \\
	\sup_{x\in\Gamma, r<\rho} \, \frac{1}{r^n} \int_{B_r(x)} |\nabla d \cdot \nabla p_j^{\mathrm{tan}}| \, dy &\leq C_{\rho} \|v\|_{vBMO(\Omega)}.
\end{align*}
\end{proposition}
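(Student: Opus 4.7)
The plan is to transform the equation $-\Delta p_j^{\mathrm{tan}} = \operatorname{div} w_j^{\mathrm{tan}}$ to the half-space via normal coordinates near $z_j$ and invert the resulting operator by a freezing-coefficient Neumann series. Under the change $x = \psi_j(\eta)$, the minus Laplacian pulls back to $L = A - B + R$, with $A = -\Delta_\eta$, $B = \sum_{1 \leq i,j \leq n-1} \partial_{\eta_i}(b_{ij} \partial_{\eta_j})$ and $R$ a first-order remainder. Aligning the chart so that the tangent plane to $\Gamma$ at $z_j$ coincides with $\{\eta_n = 0\}$ forces $b_{ij}(0) = 0$, and $\Gamma \in C^3$ gives $\|b_{ij}\|_{L^\infty(V_{2\rho})} \leq C_\Omega \rho$. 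I extend each $b_{ij}$ evenly in $\eta_n$ and multiply by a smooth cutoff supported in $V_{2\rho}$, preserving this smallness uniformly on $\mathbf{R}^n$.

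Next, I define $p_j^{\mathrm{tan}}$ (abusing notation, identifying it with its push-forward under $\psi_j$) by the Neumann series
\[
	p_j^{\mathrm{tan}} = -A^{-1} \sum_{k=0}^{\infty} (BA^{-1})^k \operatorname{div} \widetilde{w}_j^{\mathrm{tan}},
\]
where $\widetilde{w}_j^{\mathrm{tan}}$ is $w_j^{\mathrm{tan}}$ pushed forward by $\psi_j$ and extended by zero outside $V_{2\rho}$. The operator $\nabla A^{-1} \operatorname{div} = -\nabla E \ast \operatorname{div}$ is a composition of Riesz transforms, hence $BMO(\mathbf{R}^n) \to BMO(\mathbf{R}^n)$ bounded by Fefferman--Stein \cite{FS}. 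Since multiplication by a Hölder function is bounded on $bmo(\mathbf{R}^n)$ (as used in the proof of Proposition \ref{2M}), each factor $BA^{-1}$ in the series contributes a factor proportional to $\|b_{ij}\|_{L^\infty} \leq C_\Omega \rho$; choosing $\rho_\ast$ small forces $\|BA^{-1}\|_{op} < 1/2$ and the series converges. Combining this with Proposition \ref{2E} applied to $\varphi_j v_2 \in vBMO(\Omega)$ (the cutoff controlled by Proposition \ref{2M}) yields the first estimate $[\nabla p_j^{\mathrm{tan}}]_{BMO(\mathbf{R}^n)} \leq C_\rho \|v\|_{vBMO(\Omega)}$. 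The lower-order perturbation $R$ is absorbed as a further term in the series, since $RA^{-1}$ is smoothing by one order.

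The $b^\nu$ estimate follows from a parity argument. By construction $w_j^{\mathrm{tan}} = Q(\varphi_j v_2)_{\mathrm{even}}$ is even across $\Gamma$ in the direction of $\nabla d$, so in normal coordinates $\operatorname{div} \widetilde{w}_j^{\mathrm{tan}}$ is, to leading order, expressible in terms of $\partial_{\eta_1}, \ldots, \partial_{\eta_{n-1}}$ applied to even-in-$\eta_n$ functions, hence even in $\eta_n$. Since the $b_{ij}$ are extended evenly in $\eta_n$, the operator $L$ commutes with $\eta_n \mapsto -\eta_n$; therefore $p_j^{\mathrm{tan}}$ is even in $\eta_n$ and $\partial_{\eta_n} p_j^{\mathrm{tan}}$ is odd. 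Pulling back, the normal component $\nabla d \cdot \nabla p_j^{\mathrm{tan}}$ vanishes on $\Gamma$ to leading order. For an odd-across-$\Gamma$ $BMO$-function $f$ with $f|_{\Gamma} = 0$, a standard reflection-Poincaré argument gives $r^{-n}\int_{B_r(x)} |f| \, dy \leq Cr \, [f]_{BMO}$ for $x \in \Gamma$, $r < \rho$, which gives the desired $b^\nu$-bound after adding the lower-order pieces (controlled in $L^\infty$).

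The principal obstacle is identifying the function space in which $\|BA^{-1}\|_{op} \leq C_\Omega \rho$ can be achieved. Multiplication by a Hölder function is not bounded on $BMO(\mathbf{R}^n)$ in general, so one is forced to work in $bmo(\mathbf{R}^n)$, and one must exploit that the $L^\infty$-norm of $b_{ij}$ (not the Hölder norm, which does not shrink with $\rho$) controls the operator norm, together with the compact support of the $b_{ij}$. Once this is in hand, the lower-order perturbation $R$ and the passage between Cartesian and normal coordinates follow the same perturbative template, although bookkeeping the $\rho$-dependent constants is lengthy.
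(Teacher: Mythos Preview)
Your approach is essentially that of the paper: pass to normal coordinates, write $-\Delta_x = L_0 + M$ with $L_0 = A - B$, invert $L_0$ by the Neumann series $\sum_k A^{-1}(B_1 A^{-1})^k$ after cutting off $b_{ij}$ (this is the paper's Lemma \ref{4P}), and exploit the evenness of the coefficients and of the tangential source to make $\partial_{\eta_n}$ of the solution odd, whence the $b^\nu$-bound follows from the $BMO$ bound. Two points deserve correction. First, the lower-order piece $R$ (the paper's $M = \sum_j c_j \partial_{\eta_j}$) cannot be ``absorbed as a further term in the series'': its coefficients are only continuous and, more importantly, need not vanish at the origin, so $MA^{-1}$ carries no smallness in $\rho$. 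The paper instead solves $L_0 q = \text{source}$ via the Neumann series, then corrects by $E * (Mq)$ in Cartesian coordinates, using that $\nabla q \in L^p$ and $\nabla E \in L^{p'}_{\mathrm{loc}}$ to get an $L^\infty$ bound on the correction and its gradient; the remaining zeroth-order pieces of $\operatorname{div}_x w_j^{\mathrm{tan}}$ in normal coordinates are handled the same way. Second, your ``reflection--Poincar\'e'' bound is overstated: for $f$ odd in $\eta_n$ one only gets $r^{-n}\int_{B_r(\eta',0)}|f|\,d\eta \leq C[f]_{BMO}$ (no extra factor of $r$), which is exactly what is needed and what the paper uses in Lemma \ref{4P}(iv).
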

%

Having the estimate for the volume potential near the boundary regarding its tangential component, we are left to handle the contribution from $\overline{v}^{\mathrm{nor}}_2:=\overline{v}_2-\overline{v}^{\tan}_2$.
We recall its decomposition
\[
	\overline{v}^{\mathrm{nor}}_2 = \sum^m_{j=1} P \, ( \varphi_j v_2)_{\mathrm{odd}}.
\]
For simplicity of notations, we denote $P \, (\varphi_j v_2)_{\mathrm{odd}}$ by $w_j^{\mathrm{nor}}$ for every $1 \leq j \leq m$.
With a similar idea of proof, we can establish the suitable potential corresponding to $\overline{v}^{\mathrm{nor}}_2$.
\begin{proposition} \label{2LP}
There exists $\rho_\ast > 0$ such that if $\rho<\rho_\ast/2$, then for every $1 \leq j \leq m$, there exists a linear operator $v\longmapsto p_j^{\mathrm{nor}}$ from $vBMO(\Omega)$ to $L^\infty(\mathbf{R}^n)$ such that
\[
- \Delta p_j^{\mathrm{nor}} = \operatorname{div} w_j^{\mathrm{nor}} \; \; \text{in} \; \; U_{2 \rho,j} \cap \Omega
\]
and that there exists a constant $C_{\rho}$, independent of $v$, such that
\begin{align*}
	[\nabla p_j^{\mathrm{nor}}]_{BMO(\mathbf{R}^n)} &\leq C_{\rho} \|v\|_{vBMO(\Omega)}, \\
	\sup_{x\in\Gamma, r<\rho} \, \frac{1}{r^n} \int_{B_r(x)} |\nabla d \cdot \nabla p_j^{\mathrm{nor}}| \, dy &\leq C_{\rho} \|v\|_{vBMO(\Omega)}.
\end{align*}
\end{proposition}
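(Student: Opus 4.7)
The plan is to mirror the strategy of Proposition \ref{VPT} for the normal part, exploiting the odd symmetry of $w_j^{\mathrm{nor}}$ in the direction of $\nabla d$ across $\Gamma$. First, by Proposition \ref{2M}, $\varphi_j v_2 \in vBMO(\Omega)$ with norm controlled by $C_\rho\|v\|_{vBMO(\Omega)}$, so Proposition \ref{2E} yields a global extension with $[\,\overline{\varphi_j v_2}\,]_{BMO(\mathbf{R}^n)}\leq C_\rho\|v\|_{vBMO(\Omega)}$; then $w_j^{\mathrm{nor}} = P(\varphi_j v_2)_{\mathrm{odd}}$ is odd in the direction of $\nabla d$ across $\Gamma$ by construction and has the same $BMO$ bound. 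Transporting to normal coordinates via $x=\psi(\eta)$ on $V_{2\rho L_*}$, the operator $-\Delta$ becomes $L = A - B + R$ with $A=-\Delta_\eta$, $B = \sum_{i,j\leq n-1}\partial_{\eta_i}(b_{ij}\partial_{\eta_j}\cdot)$ and $R$ a first-order remainder; the $C^3$ regularity of $\Gamma$ together with $\nabla' h_{z_j}(0')=0$ gives $|b_{ij}|\leq C\rho$ on $V_{\rho_*}$.

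Next, I extend each $b_{ij}$ evenly in $\eta_n$, so $L$ is defined on $V_{2\rho L_*}$. Writing $L = A(I - A^{-1}B) + R$ and treating $R$ as a perturbation, I set
\[
\widetilde{p}_j^{\mathrm{nor}} := -\sum_{k=0}^\infty (A^{-1}B)^k A^{-1}\operatorname{div}_\eta(w_j^{\mathrm{nor}}\circ\psi),
\]
and define $p_j^{\mathrm{nor}}$ by transplanting $\widetilde{p}_j^{\mathrm{nor}}$ back via $\psi^{-1}$ and cutting off to $U_{2\rho,j}$; the far-field error caused by cut-off, together with the contribution of $R$, is absorbed by a further Neumann iteration whose kernels are integrable and can be estimated by $L^\infty$-type bounds in the spirit of Proposition \ref{2V}. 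By the Fefferman--Stein $BMO$--$BMO$ estimate, $\nabla A^{-1}\operatorname{div}$ is bounded on $BMO(\mathbf{R}^n)$, and each factor $A^{-1}B$ consists of multiplication by $b_{ij}$ followed by second-order Riesz operators; choosing $\rho_*$ so small that $\|A^{-1}B\|_{BMO\to BMO}\leq 1/2$ makes the series converge and yields
\[
[\nabla p_j^{\mathrm{nor}}]_{BMO(\mathbf{R}^n)} \leq C[w_j^{\mathrm{nor}}]_{BMO(\mathbf{R}^n)} \leq C_\rho\|v\|_{vBMO(\Omega)}.
\]

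The $b^\nu$ estimate is extracted from parity. Since $w_j^{\mathrm{nor}}\circ\psi$ is odd in $\eta_n$ and each $b_{ij}$ is extended evenly in $\eta_n$, the operator $A^{-1}\operatorname{div}_\eta$ sends it to an even function, and each subsequent $A^{-1}B$ preserves evenness; hence $\widetilde{p}_j^{\mathrm{nor}}$ is even in $\eta_n$ and $\partial_{\eta_n}\widetilde{p}_j^{\mathrm{nor}}$ is odd. Pulling back, the leading part of $\nabla d\cdot\nabla p_j^{\mathrm{nor}}$ is odd across $\Gamma$, so for $x\in\Gamma$ and $r<\rho$ the average $(\nabla d\cdot\nabla p_j^{\mathrm{nor}})_{B_r(x)}$ vanishes to leading order. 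Combined with the $BMO$ bound above this gives
\[
r^{-n}\int_{B_r(x)\cap\Omega}|\nabla d\cdot\nabla p_j^{\mathrm{nor}}|\,dy \leq C[\nabla p_j^{\mathrm{nor}}]_{BMO(\mathbf{R}^n)} + C_\rho\|v\|_{vBMO(\Omega)} \leq C_\rho\|v\|_{vBMO(\Omega)},
\]
where the error term bounds the non-odd contributions generated by $R$ and by the cutoff.

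The main obstacle is tracking parity once $R$ and the localizing cutoffs are reinstated: strict oddness of $\nabla d\cdot\nabla p_j^{\mathrm{nor}}$ fails, so one has to verify that every parity-breaking contribution is produced by an integrable kernel acting on an $L^1\cap BMO$ source and is therefore bounded in $L^\infty$ on a neighbourhood of $\Gamma$, so that the $BMO$-to-$b^\nu$ conversion still goes through. A secondary task is to choose $\rho_*$ uniformly in the chart index $j$, small enough that $\|A^{-1}B\|_{BMO\to BMO}\leq 1/2$ holds in every chart simultaneously; this is possible because the $b_{ij}$ depend continuously on the boundary point and vanish at $\eta=0$, while $\Gamma$ is compact.
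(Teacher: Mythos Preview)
Your overall strategy---Neumann series for $L_0^{-1}$ in normal coordinates, parity to convert the $BMO$ bound into a $b^\nu$ bound, and lower-order terms handled by the Newtonian potential---is exactly the paper's approach. But there is a genuine gap in the parity step as you have written it.

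You set $\widetilde{p}_j^{\mathrm{nor}} := -\sum_{k\ge 0} (A^{-1}B)^k A^{-1}\operatorname{div}_\eta(w_j^{\mathrm{nor}}\circ\psi)$ and claim that $A^{-1}\operatorname{div}_\eta$ maps $w_j^{\mathrm{nor}}\circ\psi$ to an even function of $\eta_n$. This fails on two counts. First, the right-hand side of the equation you must solve in $\eta$-coordinates is $(\operatorname{div}_x w_j^{\mathrm{nor}})\circ\psi$, not $\operatorname{div}_\eta(w_j^{\mathrm{nor}}\circ\psi)$; these differ by Jacobian factors. Second, and more seriously for the parity argument: every $x$-component of $w_j^{\mathrm{nor}}\circ\psi$ is odd in $\eta_n$ (since $w_j^{\mathrm{nor}}=g_j\nabla d$ with $g_{j,\psi}$ odd and $(\nabla d)_\psi$ constant along normal lines, hence even). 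Thus $\partial_{\eta_i}$ of the $i$-th component is odd for $i<n$ and even for $i=n$, so the naive $\eta$-divergence is a mixture and $A^{-1}$ of it is \emph{not} even. Your conclusion that $\partial_{\eta_n}\widetilde{p}_j^{\mathrm{nor}}$ is odd then does not follow, and the $b^\nu$ estimate is unproven.

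The paper resolves this by working with the scalar $g_j=\nabla d\cdot(\varphi_j v_2)_{\mathrm{odd}}$ rather than the vector $w_j^{\mathrm{nor}}$. Writing $w_j^{\mathrm{nor}}=g_j\nabla d$ gives
\[
\operatorname{div}_x w_j^{\mathrm{nor}} = \nabla_x g_j\cdot\nabla_x d + g_j\,\Delta_x d,
\]
and the crucial point is that in normal coordinates $\nabla_x\cdot\nabla_x d$ becomes \emph{exactly} $\partial_{\eta_n}$, so the leading term is $\partial_{\eta_n} g_{j,\psi}$ with $g_{j,\psi}\in Z_\rho$ odd. Lemma~\ref{3P} is tailored to precisely this input: it produces $q_o=L_0^{-1}\partial_{\eta_n}g_{j,\psi}$ even in $\eta_n$, giving the clean parity needed for the $b^\nu$ bound. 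The remaining term $g_j\Delta_x d$ is genuinely lower order (no derivative on $g_j$) and is handled directly by $E*(g_j\Delta_x d)$ with an $L^\infty$ gradient bound. Once you make this scalar reduction, the rest of your outline goes through as written.
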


Once these two propositions are proved, we are able to prove Theorem \ref{CSV}.
\begin{proof}[Theorem \ref{CSV} admitting Proposition \ref{VPT} and \ref{2LP}]
Fix $1 \leq j \leq m$. Let us first consider the contribution from the tangential part. We take a cut-off function $\theta_j\in C^\infty_c(U_{2\rho,j})$ such that $\theta_j=1$ on $U_{\rho,j}$ and $0 \leq \theta_j \leq 1$.
We next set
\[
q^{\mathrm{tan}}_{1,j} := \theta_j p_j^{\mathrm{tan}} + E * \left(p_j^{\mathrm{tan}} \Delta \theta_j + 2\nabla\theta_j \cdot \nabla p_j^{\mathrm{tan}} \right).
\]
By definition, Proposition \ref{VPT} says that
\begin{align*}
- \Delta q^{\mathrm{tan}}_{1,j} &= - \Delta(\theta_j p_j^{\mathrm{tan}}) + p_j^{\mathrm{tan}} \Delta\theta_j + 2\nabla\theta_j \cdot \nabla p_j^{\mathrm{tan}} \\
&= \theta_j \operatorname{div} w_j^{\mathrm{tan}} = \operatorname{div} w_j^{\mathrm{tan}}
\end{align*}
in $\Omega$ as $\operatorname{supp}w_j^{\mathrm{tan}} \subset U_{\rho,j}$.
By interpolation as in the proof of Proposition \ref{VPT}, we observe that $\|p_j^{\mathrm{tan}}\|_{L^\infty(\mathbf{R}^n)}$, $\|\nabla p_j^{\mathrm{tan}}\|_{L^p(\mathbf{R}^n)}$ are controlled by $\|v\|_{BMO(\Omega)}$.
Since $\nabla E$ is in $L^{p'}(B_R)$ for $p'<n/(n-1)$ where $R = \operatorname{diam}\Omega + 4\rho$, it follows that
\[
	\sup_{\mathbf{R}^n} | \nabla E* ( p_j^{\mathrm{tan}} \Delta \theta_j + 2 \nabla \theta_j \cdot \nabla p_j^{\mathrm{tan}} ) |
	\leq C_\rho \|v\|_{vBMO(\Omega)}.
\]
Thus, by Proposition \ref{VPT}, we conclude that the restriction of $q^{\mathrm{tan}}_{1,j}$ on $\Omega$, which is still denoted by $q^{\mathrm{tan}}_{1,j}$, fulfills
\begin{equation} \label{CES}
	\| \nabla q^{\mathrm{tan}}_{1,j} \|_{vBMO(\Omega)} \leq C_\rho \|v\|_{vBMO(\Omega)}.
\end{equation}
By Proposition \ref{2LP}, a similar argument yields an estimate of type (\ref{CES}) for
\[
q^{\mathrm{nor}}_{1,j} := \theta_j p_j^{\mathrm{nor}} + E * (p_j^{\mathrm{nor}} \Delta \theta_j + 2\nabla\theta_j \cdot \nabla p_j^{\mathrm{nor}} ).
\]

Set
\[
	q^2_1 = \sum^m_{j=1} q^{\mathrm{tan}}_{1,j}, \; q_1^3 = \sum_{j=1}^m q_{1,j}^{\mathrm{nor}}, \;
	q_1 = q_1^1 + q_1^2 + q_1^3.
\]
Observe that $q_1^2$ and $q^3_1$ satisfy the desired estimates in Theorem \ref{CSV}.
Moreover, by construction we have that
\begin{align*}
	- \Delta q_1 &= - \Delta q^1_1 - \Delta q^2_1 - \Delta q^3_1 \\
	&= \operatorname{div}v_1 + \sum_{j=1}^m \operatorname{div} w_j^{\mathrm{tan}} + \sum^m_{j=1} \operatorname{div} w_j^{\mathrm{nor}} \\
	&= \operatorname{div} (v_1 + v_2) = \operatorname{div} v 
\end{align*}
in $\Omega$.
\end{proof}
%

\section{Volume potentials based on normal coordinates} 
\label{sec:2}

Our goal in this section is to prove Proposition \ref{VPT} and Proposition \ref{2LP}.
We write the Laplace operator by a normal coordinate system and construct a volume potential keeping the parity of functions with respect to the boundary.
For this purpose, we adjust a perturbation method called a freezing coefficient method which is often used to construct a fundamental solution to an operator with variable coefficients. 

\subsection{A perturbation method keeping parity} \label{3PMK} 

We consider an elliptic operator of the form
\[
	L_0 = A-B, \quad A = -\Delta_\eta, \quad B = \sum_{1\leq i,j \leq n-1} \partial_{\eta_i} b_{ij}(\eta) \partial_{\eta_j}
\]
in a cylinder $V_{4 \rho}$.
We assume that
\begin{enumerate}
\item[(B1)] (Regularity)
 $b_{ij} \in \operatorname{Lip}(V_{4 \rho})$ ($1\leq i,j \leq n-1$),
\item[(B2)] (Parity)
 $b_{ij}$ is even in $\eta_n$, i.e., $b_{ij}(\eta', \eta_n)=b_{ij}(\eta', - \eta_n)$ for $\eta \in V_{4 \rho}$,
\item[(B3)] (Smallness)
 $b_{ij}(0)=0$ ($1\leq i,j\leq n-1$).
\end{enumerate}
For $\rho > 0$, let $Y_\rho$ denotes the space
\[
	\left\{ g \in BMO(\mathbf{R}^n) \cap L^2(\mathbf{R}^n) \bigm|
	\operatorname{supp} g \subset V_\rho,\ 
	g(\eta', \eta_n) = g(\eta', -\eta_n)\ \text{for}\ \eta \in V_\rho \right\},
\] 
whereas $Z_\rho$ denotes the space
\[
	\left\{ f \in BMO(\mathbf{R}^n) \bigm|
	\operatorname{supp} f \subset V_\rho,\ 
	f(\eta', \eta_n) = -f(\eta', -\eta_n)\ \text{for}\ \eta \in V_\rho \right\}.
\]
The oddness condition in $Z_\rho$ guarantees that
\[
	\frac{1}{r^n} \int_{B_r(\eta',0)} f \, d\eta = 0
\]
for any $r > 0$ and $\eta' \in \mathbf{R}^{n-1}$, which implies that
\[
	\frac{1}{r^n} \int_{B_r(\eta',0)} |f| \, d\eta \leq [f]_{BMO(\mathbf{R}^n)}
\]
for any $r > 0$ and $\eta' \in \mathbf{R}^{n-1}$. Hence $f$ is $L^1$ in $\mathbf{R}^n$.
\begin{lemma} \label{3P}
Assume that (B1) -- (B3).
Then, there exists $\rho_*>0$ depending only on $n$ and $b$ such that the following property holds provided that $\rho\in(0,\rho_*)$.
There exists a bounded linear operator $f \mapsto q_o$ from $Z_\rho$ to $L^\infty(\mathbf{R}^n)$ such that
\begin{enumerate}
\item[(i)]
\begin{align*}
&[ \nabla_\eta q_o ]_{BMO(\mathbf{R}^n)} \leq C[f]_{BMO(\mathbf{R}^n)} \quad\text{for all}\quad f\in Z_\rho
\end{align*}
with some $C$ independent of $f$;
\item[(ii)]
\begin{align*}
&L_0 q_o = \partial_{\eta_n} f \quad\text{in}\quad V_{2 \rho};
\end{align*}
\item[(iii)] $q_o$ is even in $\mathbf{R}^n$ with respect to $\eta_n$, i.e. $q_o(\eta',\eta_n)=q_o(\eta',-\eta_n) \; \forall \, \eta \in \mathbf{R}^n$;
\item[(iv)]\[
	\sup \left\{ \frac{1}{r^n} \int_{B_r(\eta',0)} \left|\partial_{\eta_n} q_o \right| \, d\eta \biggm|
	0< r < \infty,\, \eta' \in \mathbf{R}^{n-1} \right\}
	\leq C[f]_{BMO(\mathbf{R}^n)}.
\]
\end{enumerate}
\end{lemma}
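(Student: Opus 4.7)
The plan is to realize the heuristic of the introduction by constructing $q_o = L_0^{-1}\partial_{\eta_n}f = A^{-1}(I-BA^{-1})^{-1}\partial_{\eta_n}f$ as a Neumann series. The smallness needed to invert $(I - BA^{-1})$ comes from combining (B1) and (B3): the Lipschitz bound forces $|b_{ij}| = O(\rho)$ on the support we work with. The evenness (B2) combined with the oddness of $f$ in $\eta_n$ will make $q_o$ automatically even in $\eta_n$, whence the boundary estimate (iv) falls out of (i), since an odd function has vanishing mean on every ball centered on $\{\eta_n=0\}$.

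First I would globalize the coefficients. Fix an even cutoff $\chi_\rho \in C_c^\infty(V_{4\rho})$ with $\chi_\rho \equiv 1$ on $V_{2\rho}$ and $\chi_\rho(\eta',-\eta_n) = \chi_\rho(\eta',\eta_n)$, and set $\tilde b_{ij}:=\chi_\rho b_{ij}$, extended by zero outside $V_{4\rho}$. From (B1) and (B3) one obtains
\[
\|\tilde b_{ij}\|_{L^\infty(\mathbf{R}^n)} \leq C_n K_b\rho, \quad \|\nabla \tilde b_{ij}\|_{L^\infty(\mathbf{R}^n)} \leq C_n K_b,
\]
and by interpolation $\|\tilde b_{ij}\|_{C^\gamma(\mathbf{R}^n)} \leq C_n K_b \rho^{1-\gamma}$ for any $\gamma \in (0,1)$, where $K_b$ bounds the Lipschitz norms of the $b_{ij}$. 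Let $\tilde B := \sum_{i,j \leq n-1}\partial_{\eta_i}\tilde b_{ij}\partial_{\eta_j}$ and $\tilde L_0 := A - \tilde B$; this operator is globally defined on $\mathbf{R}^n$, equals $L_0$ on $V_{2\rho}$, and the $\tilde b_{ij}$ are still even in $\eta_n$.

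Next, rewrite $\tilde L_0 q = \partial_{\eta_n}f$ as $q = A^{-1}\partial_{\eta_n}f + A^{-1}\tilde B q$. Differentiating and commuting derivatives through $A^{-1}$, the vector $h := \nabla q$ satisfies
\[
h = F + \mathcal{M} h, \quad F := \nabla A^{-1}\partial_{\eta_n}f, \quad (\mathcal{M}h)_k := \sum_{i,j \leq n-1}\partial_{\eta_k}\partial_{\eta_i}A^{-1}\bigl(\tilde b_{ij} h_j\bigr).
\]
Each $\partial_{\eta_k}\partial_{\eta_i}A^{-1}$ is a Calder\'on--Zygmund operator, bounded on $L^2(\mathbf{R}^n)$ and on $bmo(\mathbf{R}^n)$ by constants depending only on $n$. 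Multiplication by $\tilde b_{ij}$ is bounded on $L^2$ with norm $\|\tilde b_{ij}\|_\infty = O(\rho)$ and on $bmo$ with norm $O(\rho^{1-\gamma})$ via the H\"older--$bmo$ product estimate used in Proposition \ref{2M}. Fixing $\gamma \in (0,1)$ and choosing $\rho_* = \rho_*(n,K_b)$ so that both operator norms of $\mathcal{M}$ are at most $1/2$, the Neumann series $h = \sum_{m \geq 0}\mathcal{M}^m F$ converges simultaneously in $L^2$ and in $bmo$, with $[h]_{BMO(\mathbf{R}^n)} \leq C[f]_{BMO(\mathbf{R}^n)}$, which is (i). Each $\mathcal{M}^m F$ is manifestly a gradient, so $q_o := \sum_{m \geq 0}(A^{-1}\tilde B)^m A^{-1}\partial_{\eta_n}f$ satisfies $\nabla q_o = h$, and $q_o \in L^\infty(\mathbf{R}^n)$ follows from $f \in L^2$ with compact support (a consequence of $f \in Z_\rho$) via an interpolation argument in the spirit of Proposition \ref{2V}. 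By construction $\tilde L_0 q_o = \partial_{\eta_n}f$ in $\mathbf{R}^n$, hence (ii) on $V_{2\rho}$.

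For (iii), let $R:(\eta',\eta_n)\mapsto(\eta',-\eta_n)$. Because $\tilde b_{ij}$ is even in $\eta_n$ and only derivatives in $\eta'$ appear in $\tilde B$, $R$ commutes with $\tilde B$; it trivially commutes with $A^{-1}$. Since $f$ is odd in $\eta_n$, $\partial_{\eta_n}f$ is even, so every term in the series for $q_o$ is even in $\eta_n$, giving (iii). Consequently $\partial_{\eta_n}q_o$ is odd, $\int_{B_r(\eta',0)}\partial_{\eta_n}q_o\,d\eta=0$ for every $\eta' \in \mathbf{R}^{n-1}$ and $r>0$, and
\[
r^{-n}\int_{B_r(\eta',0)}|\partial_{\eta_n}q_o|\,d\eta \leq C_n[\partial_{\eta_n}q_o]_{BMO(\mathbf{R}^n)} \leq C[f]_{BMO(\mathbf{R}^n)},
\]
which is (iv). The main technical obstacle I anticipate is the quantitative smallness of $\mathcal{M}$ on the $BMO$-scale: pointwise multiplication is not bounded on $BMO(\mathbf{R}^n)$ in general, which forces passage to $bmo$ and careful use of the H\"older--$bmo$ product estimate (as in Proposition \ref{2M} and the Sawano references). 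The resulting rate $\rho^{1-\gamma}$ in place of $\rho$ is harmless but must be traced through cleanly to fix $\rho_*$.
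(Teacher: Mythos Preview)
Your proposal is correct and follows essentially the same route as the paper: globalize the coefficients with an even cutoff, use (B1)+(B3) to get the $C^\gamma$ smallness $O(\rho^{1-\gamma})$, sum the Neumann series for $A^{-1}(I-\tilde B A^{-1})^{-1}\partial_{\eta_n}f$, and read off (iii)--(iv) from parity. The only cosmetic difference is that the paper iterates the scalar operator $G=\nabla'_B A^{-1}\operatorname{div}'$ in the combined norm $BMOL^p$ (with $p>n$) rather than your $\mathcal M$ on the gradient in $L^2\cap bmo$; the paper also makes explicit the step you only allude to, namely that oddness of $f\in Z_\rho$ forces $\|f\|_{L^p}\leq C_\rho[f]_{BMO}$ via the vanishing-mean argument and interpolation, which is what lets the final bound be stated purely in terms of $[f]_{BMO}$.
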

\begin{proof}
By (B3) and (B1), we observe that
\[
	\varlimsup_{\rho\downarrow 0} \, \|b_{ij}\|_{C^\gamma(V_{4 \rho})} \bigm/ \rho^{1-\gamma} < \infty
\]
for any $\gamma\in(0,1)$ and $1 \leq i,j \leq n-1$.
Indeed, for $1 \leq i,j \leq n-1$, (B1) and (B3) imply that
\begin{align*}
	\|b_{ij}\|_{L^\infty(V_{4 \rho})} &\leq 8 L\rho, \\
	[b_{ij}]_{C^\gamma(V_{4 \rho})} &:= \sup \left\{ | b_{ij}(\eta) - b_{ij}(\zeta) | \bigm/ | \eta - \zeta |^\gamma \Bigm| \eta,\zeta \in V_{4 \rho} \right\} \\
	&\leq L(16 \rho)^{1-\gamma},
\end{align*}
where $L$ is the maximum of Lipschitz bound for $b_{ij}$ for all $1\leq i,j\leq n-1$.
We next take a cut-off function.
 We take $\theta\in C^\infty_c(V_4)$ such that $\theta=1$ on $V_2$ and $0\leq\theta\leq 1$ in $V_4$,
 we may assume $\theta$ is radial so that $\theta$ is even in $\eta_n$.
 We rescale $\theta$ by setting
\[
	\theta_\rho(\eta) = \theta(\eta/\rho)
\]
so that $\theta_\rho=1$ on $V_{2 \rho}$.
Since $\| \nabla \theta_\rho \|_\infty \rho$ is bounded as $\rho\to 0$, we see that
\[
	\varlimsup_{\rho\downarrow 0} \, [\theta_\rho]_{C^\gamma(V_{4 \rho})} \rho^\gamma < \infty.
\] 
Hence, the estimate
\[
	[\theta_\rho b_{ij}]_{C^\gamma(V_{4 \rho})}
	\leq [\theta_\rho]_{C^\gamma(V_{4 \rho})} \|b_{ij}\|_{L^\infty(V_{4 \rho})}
	+ [b_{ij}]_{C^\gamma(V_{4 \rho})} \|\theta_\rho\|_{L^\infty(V_{4 \rho})}
\]
implies that
\[
	\varlimsup_{\rho\downarrow 0} \, \|\theta_\rho b_{ij}\|_{C^\gamma(V_{4 \rho})} \bigm/ \rho^{1-\gamma} < \infty.
\]

We then set
\[
	L_1 = A - B_1, \quad
	B_1 = \sum_{1\leq i,j \leq n-1} \partial_{\eta_i} b^\rho_{ij} \partial_{\eta_j}, \quad
	b^\rho_{ij} = b_{ij} \theta_\rho.
\]
For $1 \leq i,j \leq n-1$, notice that $b^\rho_{ij}$ satisfies the same property of $b_{ij}$ in (B1) -- (B3).
Moreover,
\[
	\operatorname{supp} \, b^\rho_{ij} \subset V_{4 \rho} \quad\text{and}\quad
	\left\| b^\rho_{ij} \right\|_{C^\gamma(V_{4 \rho})} \leq c_b \rho^{1-\gamma},\ \rho > 0
\]
with some $c_b$ independent of $\rho$. Since $\operatorname{supp} \, b^\rho_{ij} \subset V_{4 \rho}$, we actually have that $b_{ij}^\rho \in C^\gamma(\mathbf{R}^n)$ together with the estimate
\[
\| b_{ij}^\rho \|_{C^\gamma(\mathbf{R}^n)} \leq \| b_{ij}^\rho \|_{C^\gamma(V_{4 \rho})}.
\]
For a given $f \in Z_\rho$, we define $q_o$ by
\[
	q_o := \sum^\infty_{k=0} A^{-1} \left( B_1 A^{-1} \right)^k \partial_{\eta_n} f,
\]
where formally for a function $h$ we mean $A^{-1} h$ by $E*h$.
The parity condition (iii) is clear once $q_o$ is well defined as a function.
Since
\[
	L_1 q_o = \sum^\infty_{k=0} \left( B_1 A^{-1} \right)^k \partial_{\eta_n} f
	- \sum^\infty_{k=1} \left( B_1 A^{-1} \right)^k \partial_{\eta_n} f = \partial_{\eta_n}f 
\]
in $\mathbf{R}^n$, the property (ii) then follows since $L_1=L_0$ in $V_{2 \rho}$.

It remains to prove the convergence of $q_o$ as well as (i).
 For this purpose, we reinterpret $q_o$ in a different way.
 We rewrite
\[
	B _1 = \operatorname{div}' \cdot \nabla'_B \quad\text{with}\quad
	\nabla'_B = \left( \sum^{n-1}_{j=1} b^\rho_{ij} \partial_{\eta_j} \right)_{1\leq i \leq n-1}
\]
and observe that
\begin{align*}
	q_o &= \sum^\infty_{k=0} A^{-1} \operatorname{div}' \cdot G^k \cdot \nabla'_B A^{-1} \partial_{\eta_n} f
	+ A^{-1} \partial_{\eta_n} f, \\
	G &:= \nabla'_B A^{-1} \operatorname{div}'.
\end{align*}
Denote
\[
b^\rho := \left( b_{ij}^\rho \right)_{1 \leq i,j \leq n-1}.
\]
Since $\partial_{\eta_\alpha} A^{-1}\partial_{\eta_\beta}$ is bounded in $BMO$ \cite{FS} and also in $L^p$ ($1<p<\infty$) for all $\alpha, \beta=1,\ldots,n$, see e.g. \cite[Theorem 5.2.7 and Theorem 5.2.10]{Gra}, by a multiplication theorem we can deduce the estimates
\begin{align}
	\| Gh \|_{L^p(\mathbf{R}^n)} &\leq C_p \| b^\rho \|_{L^\infty(\mathbf{R}^n)} \| h \|_{L^p(\mathbf{R}^n)}, \label{PEG} \\
	[Gh]_{BMO(\mathbf{R}^n)} &\leq C'_\infty \| b^\rho \|_{C^\gamma(\mathbf{R}^n)}
	\left( [h]_{BMO(\mathbf{R}^n)} + \| h \|_{L^1(\mathbf{R}^n)} \right) \label{BEG}
\end{align}
provided that $\operatorname{supp} \, h \subset V_{4 \rho}$ and $\rho<1$.
 Here $C_p$ and $C'_\infty$ are independent of $\rho$ and $h$.
 Similar estimate holds for $\nabla'_B A^{-1}\partial_{\eta_n}$.
 Since $\|f\|_{L^1(\mathbf{R}^n)} \leq C_\rho [f]_{BMO(\mathbf{R}^n)}$ for $f\in Z_\rho$, by an interpolation (cf. \cite[Lemma 5]{BGST}) we see that the $L^p$ norm of $f$ is also controlled, i.e., $\|f\|_{L^p(\mathbf{R}^n)} \leq C_\rho [f]_{BMO(\mathbf{R}^n)}$ for any $1\leq p<\infty$.
 By the support condition, $A^{-1}\operatorname{div}'$ and $A^{-1} \partial_{\eta_n}$ is bounded from $L^p\to L^\infty$ for $p>n$ with bound $K$, we see that
\begin{align*}
	\| q_o \|_{L^\infty(\mathbf{R}^n)} &\leq K \left( \left\| \sum^\infty_{k=0} G^k \nabla'_B A^{-1}\partial_{\eta_n} f \right\|_{L^p(\mathbf{R}^n)} + \|f\|_{L^p(\mathbf{R}^n)} \right) \\
	&\leq K \left( \sum^\infty_{k=0} C^{k+1}_p \| b^\rho \|^{k+1}_{L^\infty(\mathbf{R}^n)} \|f\|_{L^p(\mathbf{R}^n)} + \|f\|_{L^p(\mathbf{R}^n)} \right), \quad p>n.
\end{align*}
If $\rho$ is taken small so that
\[
	\sum^\infty_{k=0} (C_p \cdot 8 L \rho)^{k+1} < \infty,
\]
then $q_o$ converges uniformly in $\mathbf{R}^n$ and $\| q_o \|_{L^\infty(\mathbf{R}^n)} \leq C_\rho [f]_{BMO(\mathbf{R}^n)}$ with some $C_\rho$ independent of $f$.

Set
\[
	\| h \|_{BMOL^p(\mathbf{R}^n)} := [h]_{BMO(\mathbf{R}^n)} + \| h \|_{L^p(\mathbf{R}^n)}.
\]
By estimates (\ref{PEG}) and (\ref{BEG}), we observe that
\[
	\| G h \|_{BMOL^p(\mathbf{R}^n)} \leq C_\ast \| b^\rho \|_{C^\gamma(\mathbf{R}^n)} \| h \|_{BMOL^p(\mathbf{R}^n)},
	\quad 1<p<\infty,
\]
where $C_\ast = C_p + C'_\infty \cdot C_n$ with $C_n$ independent of $\rho$ and $h$.
We next estimate $\nabla q_o$.
By the similar estimate for $\nabla'_B A^{-1}\operatorname{div}'$ and $\nabla'_B A^{-1} \partial_{\eta_n}$, we have that
\[
	\| \nabla q_o \|_{BMOL^p(\mathbf{R}^n)} \leq \left( \sum^\infty_{k=0} C^{k+1}_\ast \| b^\rho \|^{k+1}_{C^\gamma(\mathbf{R}^n)} + C_* \| b^\rho \|_{C^\gamma(\mathbf{R}^n)} \right) \|f\|_{BMOL^p(\mathbf{R}^n)}.
\]
We fix $p>n$ and take $\rho < \frac{1}{8 L C_p}$ sufficiently small so that
\[
	\sum^\infty_{k=0} \left( C_*  \cdot c_b \rho^{1-\gamma} \right)^{k+1} < \infty.
\]
Then we get our desired estimate
\[
	\| \nabla q_o \|_{BMOL^p (\mathbf{R}^n)} \leq C_\rho \|f\|_{BMOL^p(\mathbf{R}^n)} \leq C_\rho [f]_{BMO(\mathbf{R}^n)}
\]
for $f \in Z_\rho$.
This completes the proof of (i).

Since $\partial_{\eta_n} q_o$ is odd in $\eta_n$ so that
\[
	\frac{1}{r^n} \int_{B_r(\eta',0)} \partial_{\eta_n} q_o \, d\eta = 0
\]
for any $\eta' \in \mathbf{R}^{n-1}$,
the left-hand side of (iv) is estimated by a constant multiple of $[\partial_{\eta_n} q_o]_{BMO(\mathbf{R}^n)}$, which is estimated by a constant multiple of $[f]_{BMO(\mathbf{R}^n)}$.
The proof of (iv) is now complete.
\end{proof}

Similarly, we are able to establish the following which corresponds to a version of Lemma \ref{3P} for the space $Y_\rho$.

\begin{lemma} \label{4P}
Assume that (B1) -- (B3). Then, there exists $\rho_*>0$ depending only on $n$ and $b$ such that the following property holds provided that $\rho\in(0,\rho_*)$. For each $1 \leq i \leq n-1$, there exists a bounded linear operator $g \mapsto q_{e,i}$ from $Y_\rho$ to $L^\infty(\mathbf{R}^n)$ such that
\begin{enumerate}
\item[(i)]
\begin{align*}
&[ \nabla q_{e,i} ]_{BMO(\mathbf{R}^n)} \leq C \|g\|_{BMOL^2(\mathbf{R}^n)} \quad \text{for all} \quad g \in Y_\rho
\end{align*}
with some $C$ independent of $f$;
\item[(ii)]
\begin{align*}
&L_0 q_{e,i} = \partial_{\eta_i} g \quad \text{in} \quad V_{2 \rho};
\end{align*}
\item[(iii)] $q_{e,i}$ is even in $\mathbf{R}^n$ with respect to $\eta_n$, i.e. $q_{e,i}(\eta',\eta_n)=q_{e,i}(\eta',-\eta_n) \; \forall \, \eta \in \mathbf{R}^n$;
\item[(iv)]
\[
	\sup \left\{ \frac{1}{r^n} \int_{B_r(\eta',0)} \left|\partial_{\eta_n} q_{e,i} \right| \, d\eta \biggm|
	0 < r < \infty,\, \eta' \in \mathbf{R}^{n-1}  \right\}
	\leq C \|g\|_{BMOL^2(\mathbf{R}^n)}.
\]
\end{enumerate}
\end{lemma}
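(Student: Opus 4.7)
The plan is to mirror the construction and proof of Lemma \ref{3P}, making only the adjustments forced by the fact that the RHS derivative is tangential and the data is even rather than odd. Using the same cutoff $\theta_\rho$ and truncated coefficients $b_{ij}^\rho = \theta_\rho b_{ij}$, and the same operators $B_1$, $\nabla'_B$, and $G = \nabla'_B A^{-1} \operatorname{div}'$, define
\[
q_{e,i} := \sum_{k=0}^\infty A^{-1} (B_1 A^{-1})^k \partial_{\eta_i} g.
\]
The telescoping identity $L_1 q_{e,i} = \partial_{\eta_i} g$ established exactly as in Lemma \ref{3P} yields property (ii) in $V_{2\rho}$ once convergence is known, since $L_1 = L_0$ on $V_{2\rho}$.

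The parity (iii) is immediate from the structure: $g$ is even in $\eta_n$ by hypothesis, $\partial_{\eta_i}$ with $1 \le i \le n-1$ acts tangentially and therefore preserves parity in $\eta_n$, the fundamental solution $E$ is radial so $A^{-1} = E \ast \,\cdot\,$ preserves parity, and $B_1$ involves only tangential derivatives together with multiplication by $b_{ij}^\rho$ which are even in $\eta_n$ by (B2). Hence every term of the series is even in $\eta_n$, so $q_{e,i}$ is even and $\partial_{\eta_n} q_{e,i}$ is odd in $\eta_n$. Property (iv) then follows from property (i): because $\partial_{\eta_n} q_{e,i}$ is odd in $\eta_n$, its integral over every ball $B_r(\eta',0)$ vanishes, so $r^{-n}\int_{B_r(\eta',0)} |\partial_{\eta_n} q_{e,i}| \, d\eta \le [\partial_{\eta_n} q_{e,i}]_{BMO(\mathbf{R}^n)} \le C\|g\|_{BMOL^2(\mathbf{R}^n)}$.

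The main technical step is (i), together with the $L^\infty$ bound needed to make $q_{e,i}$ a well-defined function. Here lies the one genuine difference from Lemma \ref{3P}: an odd function in $Z_\rho$ is automatically in $L^1(\mathbf{R}^n)$, but an even function in $Y_\rho$ is not, so we lose the free $L^1$ control. The remedy is to use the $L^2$ ingredient in the $BMOL^2$ norm: since $g \in BMO(\mathbf{R}^n)$ is supported in $V_\rho$, the John--Nirenberg inequality applied on a fixed ball containing $V_\rho$, combined with $\|g\|_{L^2(\mathbf{R}^n)}$ to control the mean, gives $\|g\|_{L^p(\mathbf{R}^n)} \le C_\rho \|g\|_{BMOL^2(\mathbf{R}^n)}$ for every $1 \le p < \infty$. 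With this substitute for the $L^1$ estimate used in Lemma \ref{3P}, the bounds \eqref{PEG} and \eqref{BEG} (and the analogous bounds for $\nabla'_B A^{-1}\partial_{\eta_i}$, $A^{-1}\partial_{\eta_i}$, and $A^{-1}\operatorname{div}'$) yield
\[
\|Gh\|_{BMOL^p(\mathbf{R}^n)} \le C_\ast \|b^\rho\|_{C^\gamma(\mathbf{R}^n)} \|h\|_{BMOL^p(\mathbf{R}^n)}
\]
for $\operatorname{supp} h \subset V_{4\rho}$. Because $\|b^\rho\|_{C^\gamma(\mathbf{R}^n)} \le c_b \rho^{1-\gamma}$, choosing $\rho$ small (depending only on $n$ and $b$) makes the geometric series converge in $BMOL^p(\mathbf{R}^n)$ for some fixed $p>n$, which gives the $BMO$ estimate in (i). The $L^\infty$ bound for $q_{e,i}$ itself follows from $A^{-1}\partial_{\eta_i} : L^p \to L^\infty$ for $p>n$ under the support constraint, exactly as in the proof of Lemma \ref{3P}.

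The hard part is the absence of the automatic $L^1$ estimate, which the oddness of $Z_\rho$ gave for free; replacing it by the John--Nirenberg upgrade from $BMO \cap L^2$ to $BMO \cap L^p$ under compact support is the only real additional ingredient, and everything else is a parity-tracked transcription of the argument for Lemma \ref{3P}.
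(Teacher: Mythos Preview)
Your proposal is correct and follows essentially the same approach as the paper: the paper's proof simply defines $q_{e,i} := \sum_{k=0}^\infty A^{-1} (B_1 A^{-1})^k \partial_{\eta_i} g$ and states that ``exactly the same arguments of the proof of Lemma \ref{3P} finish the rest of the work,'' noting only that $\partial_{\eta_i} g$ has the same parity in $\eta_n$ as $\partial_{\eta_n} f$ did. You have in fact supplied more detail than the paper does, correctly identifying that the automatic $L^1$ control available for $Z_\rho$ via oddness is absent for $Y_\rho$, and that this is precisely why the right-hand side in (i) and (iv) is $\|g\|_{BMOL^2}$ rather than $[g]_{BMO}$; your use of John--Nirenberg on a fixed ball to upgrade $BMO\cap L^2$ with compact support to $L^p$ is a valid substitute for the interpolation lemma \cite[Lemma 5]{BGST} invoked in the proof of Lemma \ref{3P}.
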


\begin{proof}
Fix $1 \leq i \leq n-1$. Since $g$ is even in $\mathbf{R}^n$ with respect to $\eta_n$, $\partial_{\eta_i} g$ is also even in $\mathbf{R}^n$ with respect to $\eta_n$. This means that $\partial_{\eta_i} g$ has the same parity with $\partial_{\eta_n} f$ in Lemma \ref{3P}. By considering
\[
q_{e,i} := \sum_{k=0}^\infty A^{-1} (B_1 A^{-1})^k \partial_{\eta_i} g,
\]
exactly the same arguments of the proof of Lemma \ref{3P} finish the rest of the work.
\end{proof}

We take $\rho_\ast$ in Lemma \ref{3P} and Lemma \ref{4P} to be 
\[
\rho_\ast := \mathrm{min} \, \left\{ \rho_{0,\ast}, \; \frac{1}{8 L C_p}, \; \bigg( \frac{1}{C_\ast \cdot c_b} \bigg)^{\frac{1}{1-\gamma}} \right\}.
\]

\subsection{Laplacian in a normal coordinate system} \label{3LN} 

Take $z_0 \in \Gamma$. Let us recall the normal coordinate system (\ref{NCC}) introduced in Section \ref{LOC}, i.e.,
\begin{eqnarray*}
\left\{
\begin{array}{lcl}
	x' &=& \eta' + \eta_n \nabla' d ( \eta', h_{z_0}(\eta') ); \\
	x_n &=& h_{z_0}(\eta') + \eta_n \partial_{\eta_n} d ( \eta', h_{z_0}(\eta') )
\end{array}
\right.
\end{eqnarray*}
in $U_{\rho_0}(z_0)$ with $\nabla' h_{z_0}(0')=0$, $h_{z_0}(0')=0$ up to translation and rotation such that $z_0=0$ and 
\[
	-\mathbf{n} \left( \eta', h_{z_0}(\eta') \right)
	= \left( -\nabla' h_{z_0}(\eta'), 1 \right) \Bigm/
	\left( 1 + \left| \nabla'_{z_0} h( \eta' )\right|^2 \right)^{1/2},
	\quad \eta' \in B_{\rho_0}.
\]
Since $\Gamma$ is $C^3$, the mapping $x=\psi(\eta) \in C^2(V_{\rho_0})$ in $U_{\rho_0}(z_0)$, it is a local $C^2$-diffeomorphism.
Moreover, its Jacobi matrix $D\psi$ is the identity at $0$, i.e.,
\[
	\nabla \psi (0) = I = \nabla \psi^{-1} (0).
\]
A direct calculation shows that in $U_{\rho_0}(z_0) \cap \Omega$,
\begin{align*}
	-\Delta_x = -\Delta_{\eta} &- \left\{ \sum_{\substack{1\leq i,j \leq n-1 \\ i\neq j}} \gamma_{ij} \partial_{\eta_i} \partial_{\eta_j}
	+ \sum^{n-1}_{j=1} (\gamma_{jj}-1) \partial^2_{\eta_j} \right\} \\
	&- \sum_{1\leq i,j \leq n} \frac{\partial^2 \eta_j}{\partial x^2_i} \partial_{\eta_j}, \ 
	 \gamma_{ij} = \sum^n_{k=1} \frac{\partial \eta_j}{\partial x_k} \frac{\partial \eta_i}{\partial x_k}.
\end{align*}
Note that $\gamma_{jj}(0)=1$ while $\gamma_{ij}(0)=0$ if $i \neq j$.
Changing order of multiplication and differentiation, we conclude that
\begin{align*}
	-\Delta_x &= \tilde{L}_0 + \tilde{M}, \\
	\tilde{L}_0 &:= A - \tilde{B},\quad A := -\Delta_\eta,\quad \tilde{B} := \sum_{1\leq i,j \leq n-1}
	\partial_{\eta_i} \tilde{b}_{ij} (\eta) \partial_{\eta_j}, \\
	\tilde{M} &:= \sum^n_{j=1} \tilde{c}_j (\eta) \partial_{\eta_j}
\end{align*}
with $\tilde{b}_{ij}=\gamma_{ij}-\delta_{ij}$, $\tilde{c}_j=-\sum^n_{i=1} \frac{\partial^2 \eta_j}{\partial x^2_i} + \sum^n_{i=1} \partial_{\eta_i} \gamma_{ij}$.
Note that if $\Gamma=\partial\Omega$ is $C^3$, $\tilde{b}_{ij} \in C^1(V_{\rho_0})$ and $\tilde{c_j} \in C(V_{\rho_0})$.
We restrict $\tilde{b}_{ij}$, $\tilde{c}_j$ in $V_{\rho_0} \cap \mathbf{R}_+^n$ and extend  to $V_{\rho_0}$ so that the extended function $b_{ij}$, $c_j$'s are even in $V_{\rho_0}$ with respect to $\eta_n$, i.e., we set $b_{ij} = E_{\mathrm{even}} \; r_{V_{\rho_0} \cap \mathbf{R}_+^n} \, \tilde{b_{ij}}$ and $c_j = E_{\mathrm{even}} \; r_{V_{\rho_0} \cap \mathbf{R}_+^n} \, \tilde{c_j}$. 
By this extension, $b_{ij}$ may not be in $C^1$ but still Lipschitz and $c_j\in C (V_{\rho_0})$.
We set
\begin{align*}
	B &:= \sum_{1\leq i,j \leq n-1} \partial_{\eta_i} b_{ij} (\eta) \partial_{\eta_j}, \\
	M &:= \sum^n_{j=1} c_j (\eta) \partial_{\eta_j}
\end{align*}
and
\[
	L := L_0 + M,\quad L_0 = A - B.
\]
The operator $L$ may not agree with $-\Delta_x$ outside $U_{\rho_0}(z_0) \cap \Omega$.
We summarize what we observe so far.
\begin{proposition} \label{3R}
Let $\Gamma=\partial\Omega$ be $C^3$ and $\rho_0$ be chosen as in Section \ref{LOC}.
For $z_0 \in \Gamma$, $L_0$ satisfies (B1) -- (B3).
 Moreover, $-\Delta_x=L$ in $U_{\rho_0}(z_0) \cap \Omega$ and the coefficient of $M$ is in $C(V_{\rho_0})$.
\end{proposition}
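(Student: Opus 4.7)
The plan is to verify the three conditions (B1)--(B3) for $L_0$ together with the identity $-\Delta_x = L$ on $U_{\rho_0}(z_0) \cap \Omega$ and the continuity of the coefficients of $M$. Most of the geometric work has already been carried out in the derivation of $-\Delta_x = \tilde L_0 + \tilde M$ with $\tilde b_{ij} = \gamma_{ij} - \delta_{ij}$ and $\tilde c_j = -\sum_i \partial^2 \eta_j / \partial x_i^2 + \sum_i \partial_{\eta_i}\gamma_{ij}$; what remains is to track how the even-extension step across $\{\eta_n = 0\}$ affects these coefficients.

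First I would handle (B3) and (B2). For (B3), the identity $\nabla\psi(0) = I$, which follows from $\nabla' h_{z_0}(0') = 0$, $h_{z_0}(0') = 0$, and $\partial_{x_n} d(0) = 1$ at $z_0 = 0$ (since $\mathbf{n}(0) = -e_n$ after the rotation), gives $\partial\eta_j/\partial x_k (0) = \delta_{jk}$. Hence $\gamma_{ij}(0) = \delta_{ij}$, $\tilde b_{ij}(0) = 0$, and the even extension, which preserves the value on $\{\eta_n = 0\}$, yields $b_{ij}(0) = 0$. Condition (B2) is built in by the definition $b_{ij} = E_{\mathrm{even}}\, r_{V_{\rho_0} \cap \mathbf{R}^n_+}\, \tilde b_{ij}$.

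Next I would verify (B1). Because $\Gamma$ is $C^3$, the graph function $h_{z_0}$ is $C^3$, and $d \in C^2(\Gamma^{\mathbf{R}^n}_{R_\ast})$ so that $\nabla d \in C^1$ in the $R_\ast$-neighborhood of $\Gamma$. Hence $\psi \in C^2(V_{\rho_0})$ and, being a local diffeomorphism with unit Jacobian at $0$, its inverse is also $C^2$. Thus $\partial\eta_j/\partial x_k \in C^1$, so $\gamma_{ij} \in C^1$ and $\tilde b_{ij} \in C^1(V_{\rho_0} \cap \mathbf{R}^n_+)$. The even extension across $\{\eta_n = 0\}$ of a $C^1$ function is Lipschitz: the tangential partials $\partial_{\eta_1}, \ldots, \partial_{\eta_{n-1}}$ remain continuous across the hyperplane, while $\partial_{\eta_n}$ becomes odd and bounded on each side. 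This yields $b_{ij} \in \operatorname{Lip}(V_{4\rho})$ for any $\rho \leq \rho_0/4$, which is exactly (B1).

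Finally, for $-\Delta_x = L$ on $U_{\rho_0}(z_0) \cap \Omega$, I would note that $\psi^{-1}(U_{\rho_0}(z_0) \cap \Omega) \subset V_{\rho_0} \cap \mathbf{R}^n_+$, where by construction $b_{ij} = \tilde b_{ij}$ and $c_j = \tilde c_j$; combining this with the chain-rule computation recorded before the statement gives $-\Delta_x = L_0 + M = L$ on that region. For $c_j \in C(V_{\rho_0})$, the $C^2$ regularity of $\psi$ together with the $C^1$ regularity of $\gamma_{ij}$ shows $\tilde c_j \in C(V_{\rho_0} \cap \mathbf{R}^n_+)$, and the even extension of a continuous function is continuous since the two one-sided values agree on $\{\eta_n = 0\}$. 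The only delicate point is the loss of $C^1$ regularity in the extension step for $b_{ij}$; this is precisely where the $C^3$ (rather than merely $C^2$) hypothesis on $\Gamma$ is used, ensuring that one full derivative survives after even extension so that the hypotheses of Lemma \ref{3P} and Lemma \ref{4P} are met.
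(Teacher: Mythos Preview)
Your proposal is correct and follows exactly the route the paper takes: the paper does not give a separate proof of this proposition but states it as a summary (``We summarize what we observe so far'') of the derivation in Section~\ref{3LN}, and your argument spells out precisely those observations --- $\nabla\psi(0)=I$ gives (B3), the even-extension definition gives (B2), $C^3$ regularity of $\Gamma$ gives $\tilde b_{ij}\in C^1$ whose even extension is Lipschitz for (B1), and agreement of $b_{ij},c_j$ with $\tilde b_{ij},\tilde c_j$ on the upper half gives $-\Delta_x=L$ in $U_{\rho_0}(z_0)\cap\Omega$.
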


Although we do not use the explicit formula of $\Delta$ in normal coordinates, we give it for $n=2$ when we take the arc length parameter $s$ to represent $\Gamma$.
 The coordinate transform is of the form
\begin{align*}
	x_1 &= \phi_1(x) + r\phi'_2 (s) \\
	x_2 &= \phi_2(x) - r\phi'_1 (s) 
\end{align*}
with $\phi'^2_1 + \phi'^2_2=1$ and $r=d(x)$.
 A direct calculation yields
\[
	-\Delta_x = -\Delta_{s,r} -\partial_s \left( \frac{1}{J^2}-1 \right) \partial_s
	- \frac{\partial_s J}{J^3} \partial_s
	- \frac{1}{r} \left( 1-\frac{1}{J} \right) \partial_r,
\]
where $J=1+r\kappa$ and $\kappa$ is the curvature.
We see that that the even extension of coefficient does not agree with $-\Delta_x$ outside $\Omega$.
\subsection{$bmo$ invariant under local $C^1$-diffeomorphism} \label{BMOI}
Before we give the proofs to Proposition \ref{VPT} and \ref{2LP}, we shall first establish the fact that the $bmo$ estimate of a compactly supported function is preserved under a local $C^1$-diffeomorphism. Let $V,U \subset \mathbf{R}^n$ be two domains, we consider a local $C^1$-diffeomorphism $\psi: V \mapsto U$. Suppose that
\[
\| \nabla_\eta \psi \|_{L^\infty(V)} + \| \nabla_x \psi^{-1} \|_{L^\infty(U)} < \infty.
\]
 Let $\rho>0$. Assume that there exist two bounded subdomains $V_\rho \subset V, U_\rho \subset U$ such that $\psi:V_\rho \mapsto U_\rho$ is also a local $C^1$-diffeomorphism. Set
\[
K_\ast := \mathrm{max} \left\{1, \| \nabla_\eta \psi \|_{L^\infty(V)} + \| \nabla_x \psi^{-1} \|_{L^\infty(U)} \right\}.
\]
We assume further that there exists a constant $c_0$ such that for some $\eta_0 \in V_\rho$,
\[
V_\rho \subset B_{c_0 \rho}(\eta_0) \subset B_{K_\ast (c_0+3) \rho}(\eta_0) \subset V, \; \; U_\rho \subset B_{c_0 \rho}(x_0) \subset B_{K_\ast (c_0+3) \rho}(x_0) \subset U
\]
where $x_0 = \psi(\eta_0)$.

\begin{proposition} \label{BNCS}
Let $f \in bmo(\mathbf{R}^n)$ with $\operatorname{supp} f \subset V_\rho$, then $f \circ \psi^{-1} \in bmo(\mathbf{R}^n)$ satisfies
\[
\| f \circ \psi^{-1} \|_{bmo(\mathbf{R}^n)} \leq C_\rho \| f \|_{bmo(\mathbf{R}^n)}.
\]
\end{proposition}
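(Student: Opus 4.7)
The plan is to estimate $\|f\circ\psi^{-1}\|_{bmo(\mathbf{R}^n)} = [f\circ\psi^{-1}]_{BMO(\mathbf{R}^n)} + \|f\circ\psi^{-1}\|_{L^1_{\mathrm{ul}}(\mathbf{R}^n)}$ by treating the $L^1_{\mathrm{ul}}$ and $BMO$ pieces separately, exploiting the compact support of $f$ in $V_\rho$ together with the bi-Lipschitz control of $\psi$. The starting observation is that $\operatorname{supp}(f\circ\psi^{-1}) \subset \psi(V_\rho) = U_\rho$, so extending $f\circ\psi^{-1}$ by zero outside $U$ produces a measurable function on $\mathbf{R}^n$. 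A change of variables $y = \psi(\eta)$ together with $|\det D\psi| \leq K_\ast^n$ yields
\[
\|f\circ\psi^{-1}\|_{L^1(\mathbf{R}^n)} = \int_{V_\rho} |f(\eta)|\,|\det D\psi(\eta)|\,d\eta \leq K_\ast^n \|f\|_{L^1(V_\rho)}.
\]
Since $V_\rho \subset B_{c_0\rho}(\eta_0)$ can be covered by at most $C_\rho$ unit balls, $\|f\|_{L^1(V_\rho)} \leq C_\rho \|f\|_{L^1_{\mathrm{ul}}(\mathbf{R}^n)}$, and because $f\circ\psi^{-1}$ is supported in the bounded set $U_\rho$, we also conclude $\|f\circ\psi^{-1}\|_{L^1_{\mathrm{ul}}(\mathbf{R}^n)} \leq \|f\circ\psi^{-1}\|_{L^1(\mathbf{R}^n)} \leq C_\rho\|f\|_{bmo(\mathbf{R}^n)}$.

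For the $BMO$ seminorm, I would consider an arbitrary ball $B_r(x) \subset \mathbf{R}^n$ and split by cases. If $B_r(x) \cap U_\rho = \emptyset$, the contribution vanishes. If $B_r(x) \cap U_\rho \neq \emptyset$ and $r \leq 3\rho/2$, pick $x_1 \in B_r(x) \cap U_\rho \subset B_{c_0\rho}(x_0)$; then $B_r(x) \subset B_{2r}(x_1) \subset B_{(c_0+3)\rho}(x_0) \subset U$, so $\psi^{-1}$ is defined on the whole of $B_r(x)$. Setting $\eta_1 := \psi^{-1}(x_1)$, the bound $\|\nabla\psi^{-1}\|_\infty \leq K_\ast$ gives $\psi^{-1}(B_r(x)) \subset B_{2K_\ast r}(\eta_1)$, and change of variables together with the choice $c = f_{B_{2K_\ast r}(\eta_1)}$ yields
\[
\frac{1}{|B_r(x)|}\int_{B_r(x)}|f\circ\psi^{-1}(y) - c|\,dy \leq K_\ast^n (2K_\ast)^n \cdot \frac{1}{|B_{2K_\ast r}(\eta_1)|}\int_{B_{2K_\ast r}(\eta_1)}|f - c|\,d\eta \leq 2^n K_\ast^{2n}[f]_{BMO(\mathbf{R}^n)}.
\]
For $r > 3\rho/2$, the triangle inequality together with the first paragraph gives
\[
\frac{1}{|B_r(x)|}\int_{B_r(x)}|f\circ\psi^{-1} - (f\circ\psi^{-1})_{B_r(x)}|\,dy \leq \frac{2\|f\circ\psi^{-1}\|_{L^1(\mathbf{R}^n)}}{|B_r(x)|} \leq C_\rho \|f\|_{bmo(\mathbf{R}^n)}.
\]
Invoking the standard equivalent form of $[\,\cdot\,]_{BMO}$ (infimum over constants, up to a factor of $2$) and combining the three cases gives $[f\circ\psi^{-1}]_{BMO(\mathbf{R}^n)} \leq C_\rho \|f\|_{bmo(\mathbf{R}^n)}$, completing the estimate together with the $L^1_{\mathrm{ul}}$ bound.

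The main obstacle is the incompatibility between the $BMO$ seminorm, which is taken over all balls of $\mathbf{R}^n$, and the fact that $\psi^{-1}$ is only defined on $U$: balls that straddle $\partial U$ cannot be handled by a direct change of variables. The container hypothesis $U_\rho \subset B_{c_0\rho}(x_0) \subset B_{K_\ast(c_0+3)\rho}(x_0) \subset U$ is designed precisely to prevent this, since it forces every small ball meeting $U_\rho$ to lie entirely inside $U$, while large balls are dispatched by a cheap $L^1$ bound coming from the compact support of $f\circ\psi^{-1}$. Once this case split is in place, the remaining work is a routine change-of-variables computation exploiting the equivalent definition of the $BMO$ seminorm via arbitrary constants.
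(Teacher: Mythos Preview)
Your proof is correct and follows the same overall shape as the paper's: reduce the $L^1_{\mathrm{ul}}$ part to an $L^1$ change of variables using the compact support, and handle the $BMO$ part by pulling back small balls through $\psi^{-1}$ using the bi-Lipschitz control. The one genuine difference is how you pass from the local oscillation estimate to the global $BMO$ seminorm. The paper establishes only
\[
[f\circ\psi^{-1}]_{BMO^\infty(B_{(c_0+1)\rho}(x_0))} \leq C_\rho [f]_{BMO(\mathbf{R}^n)}
\]
and then invokes an extension theorem for $bmo$ functions (their \cite[Theorem 12]{GigaGu2}) to upgrade this to the full $bmo(\mathbf{R}^n)$ norm. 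You instead carry out this extension step by hand, splitting balls into the three regimes (disjoint from $U_\rho$; small and meeting $U_\rho$; large) and disposing of large balls by the crude $L^1$ bound. Your route is more self-contained and makes the role of the container hypothesis $U_\rho \subset B_{c_0\rho}(x_0) \subset B_{K_\ast(c_0+3)\rho}(x_0) \subset U$ completely explicit; the paper's route is terser but imports a black-box extension result. Both are standard ways to globalize a local $BMO$ estimate for a compactly supported function.
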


\begin{proof}
Since $\operatorname{supp} f \circ \psi^{-1} \subset U_\rho$, we can treat $f \circ \psi^{-1}$ as a function in $\mathbf{R}^n$ with value zero outside $U_\rho$. The compactness of $V_\rho$ in $\mathbf{R}^n$ implies that $\| f \|_{bmo(\mathbf{R}^n)} = \| f \|_{BMOL^1(\mathbf{R}^n)}$. Thus, the $L^1$ estimate 
\[
\| f \circ \psi^{-1} \|_{L^1(\mathbf{R}^n)} \leq C \| f \|_{L^1(\mathbf{R}^n)}
\]
is obvious. Since $\psi \in C^1(V_\rho)$, an equivalent definition of the $BMO$-seminorm (cf. \cite[Proposition 3.1.2]{Gra}) implies that
\[
[f \circ \psi^{-1}]_{BMO^\infty(B_{( c_0+1) \rho}(x_0))} \leq \| \nabla_x \psi^{-1} \|_{L^\infty(U)}^n \cdot \| \nabla_\eta \psi \|_{L^\infty(V)} \cdot [f]_{BMO(\mathbf{R}^n)}.
\]
As $U_\rho \subset B_{c_0 \rho}(x_0)$, by the extension theorem of $bmo$ functions \cite[Theorem 12]{GigaGu2}, we obtain that
\[
\| f \circ \psi^{-1} \|_{bmo(\mathbf{R}^n)} \leq C_\rho \|f \circ \psi^{-1}\|_{bmo_\infty^\infty(B_{(c_0+1) \rho}(x_0))} \leq C_\rho \| f \|_{bmo(\mathbf{R}^n)}.
\]
\end{proof}

Similarly, if $g \in bmo(\mathbf{R}^n)$ with $\operatorname{supp} g \subset U_\rho$, then we have that $g \circ \psi \in bmo(\mathbf{R}^n)$ satisfying
\[
\| g \circ \psi \|_{bmo(\mathbf{R}^n)} \leq C_\rho \| g \|_{bmo(\mathbf{R}^n)}.
\]
Even if we are considering vector fields instead of scalar functions, similar results hold.

\begin{proposition} \label{BNCV}
Let $\nabla_\eta f \in bmo(\mathbf{R}^n)$ with $\operatorname{supp} \nabla_\eta f \subset V_\rho$, then $\nabla_x (f \circ \psi^{-1}) \in bmo(\mathbf{R}^n)$ satisfying
\[
\| \nabla_x (f \circ \psi^{-1}) \|_{bmo(\mathbf{R}^n)} \leq C_\rho \| \nabla_\eta f \|_{bmo(\mathbf{R}^n)}.
\]
\end{proposition}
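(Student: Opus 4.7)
The plan is to reduce Proposition \ref{BNCV} to the scalar case already handled by Proposition \ref{BNCS} via the chain rule, followed by a $bmo$--multiplication estimate for the Jacobian coefficients. First I would write, for $x \in U_\rho$,
\[
\partial_{x_i}(f \circ \psi^{-1})(x) = \sum_{j=1}^{n} \bigl(\partial_{x_i}\psi^{-1}_j\bigr)(x) \cdot \bigl[(\partial_{\eta_j} f) \circ \psi^{-1}\bigr](x),
\]
with both sides vanishing outside $U_\rho$. Each partial $\partial_{\eta_j} f$ lies in $bmo(\mathbf{R}^n)$ with support in $V_\rho$, so Proposition \ref{BNCS} directly supplies
\[
\bigl\| (\partial_{\eta_j} f) \circ \psi^{-1} \bigr\|_{bmo(\mathbf{R}^n)} \leq C_\rho \|\partial_{\eta_j} f\|_{bmo(\mathbf{R}^n)}, \qquad \operatorname{supp}(\partial_{\eta_j} f)\circ\psi^{-1} \subset U_\rho.
\]

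Next I would multiply by the Jacobian entries $\partial_{x_i}\psi^{-1}_j$. In the intended applications of this proposition (the normal coordinate system coming from a $C^3$ boundary in Section \ref{sec:1} and Section \ref{sec:2}) the map $\psi$ is $C^2$, so $\partial_{x_i}\psi^{-1}_j$ is continuously differentiable, hence H\"older continuous on any compact subset of $U$. After localizing with a smooth cut-off equal to $1$ on $U_\rho$ and supported in a slightly larger ball still contained in $U$, one can treat the multiplier as a compactly supported H\"older function. Then the $bmo$--multiplication rule (the same one invoked in the proof of Proposition \ref{2M}, based on $bmo = (h^1)'$ and \cite[Remark 4.4]{Sa}) yields
\[
\bigl\| (\partial_{x_i}\psi^{-1}_j) \cdot \bigl[(\partial_{\eta_j} f) \circ \psi^{-1}\bigr] \bigr\|_{bmo(\mathbf{R}^n)} \leq C_\rho \|\partial_{\eta_j} f\|_{bmo(\mathbf{R}^n)}.
\]
Summing over $i,j=1,\ldots,n$ and applying the triangle inequality gives the stated estimate for $\nabla_x(f \circ \psi^{-1})$.

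The main obstacle is Step 3: $bmo(\mathbf{R}^n)$ is not closed under multiplication by arbitrary $L^\infty$ functions, so one cannot simply use $\|\partial_{x_i}\psi^{-1}_j\|_{L^\infty}$ as the multiplier bound. The argument genuinely requires H\"older regularity of the Jacobian, which is why the scalar Proposition \ref{BNCS} (whose proof only uses change of variables in the $BMO$ seminorm) does not suffice on its own; one must carry the multiplicative coefficient through the $bmo$ product estimate. A secondary technical point is ensuring that the cut-off used to make the H\"older factor compactly supported does not enlarge the support of the product beyond a controlled neighborhood of $U_\rho$, so that the extension theorem \cite[Theorem 12]{GigaGu2} can still be applied after multiplication, exactly as in the proof of Proposition \ref{BNCS}.
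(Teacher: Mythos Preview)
Your argument is correct and matches the paper's proof in substance: both use the chain rule to express $\partial_{x_i}(f\circ\psi^{-1})$ as a sum of Jacobian entries times $(\partial_{\eta_j}f)\circ\psi^{-1}$, localize the Jacobian with a cut-off, and then invoke the H\"older--$bmo$ multiplication rule together with the extension theorem \cite[Theorem 12]{GigaGu2}. The only organizational difference is that the paper carries out the change of variables and the multiplication on the $\eta$-side (bounding $[\theta_{\ast,\rho}(\partial\eta_l/\partial x_i)_\psi\,\partial_{\eta_l}f]_{BMO(\mathbf{R}^n)}$ directly), whereas you first apply Proposition~\ref{BNCS} to move each $\partial_{\eta_j}f$ to the $x$-side and then multiply; both routes require the same H\"older regularity of the Jacobian (available since $\psi$ is $C^2$ in the normal-coordinate application).
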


\begin{proof}
Since $\nabla_\eta f$ is compactly supported, the $L^1$ estimate
\[
\| \nabla_x (f \circ \psi^{-1}) \|_{L^1(\mathbf{R}^n)} \leq C \| \nabla_\eta f \|_{L^1(\mathbf{R}^n)}
\]
is obvious. Pick a cut-off function $\theta_{\ast,\rho} \in C_c^\infty(B_{K_\ast (c_0+3) \rho}(\eta_0))$ such that $\theta_{\ast,\rho} = 1$ in $B_{K_\ast (c_0+2) \rho}(\eta_0)$. Consider $B_r(x) \subset B_{(c_0+1) \rho}(x_0)$ with $r < \rho$. Let $\eta = \psi^{-1}(x)$. Since $\psi^{-1}(B_r(x)) \subset B_{K_\ast (c_0+2) \rho}(\eta_0)$, we have that
\begin{align*}
\frac{1}{r^n} \int_{B_r(x)} | \partial_{x_i} (f \circ \psi^{-1}) - c | \, dy &\leq \frac{K_\ast}{r^n} \int_{\psi^{-1}(B_r(x))} \bigg| \sum_{1 \leq l \leq n} \theta_{\ast,\rho} \bigg( \frac{\partial \eta_l}{\partial x_i} \bigg)_\psi \partial_{\eta_l} f - c \bigg| \, d\eta 
\end{align*}
for any $c \in \mathbf{R}^n$, $1 \leq i \leq n$. By considering an equivalent definition of the $BMO$-seminorm, see e.g. \cite[Proposition 3.1.2]{Gra}, we deduce that
\begin{align*}
[\nabla_x (f \circ \psi^{-1})]_{BMO^\infty(B_{(c_0+1) \rho}(x_0))} &\leq K_\ast^{n+1} \bigg[ \sum_{1 \leq i,l \leq n} \theta_{\ast,\rho} \bigg( \frac{\partial \eta_l}{\partial x_i} \bigg)_\psi \partial_{\eta_l} f \bigg]_{BMO(\mathbf{R}^n)} \\
&\leq C_\rho \| \nabla_\eta f \|_{bmo(\mathbf{R}^n)}.
\end{align*}
As $U_\rho \subset B_{c_0 \rho}(x_0)$, by the extension theorem of $bmo$ functions \cite[Theorem 12]{GigaGu2}, we obtain that
\[
\| \nabla_x (f \circ \psi^{-1}) \|_{bmo(\mathbf{R}^n)} \leq C_\rho \| \nabla_x (f \circ \psi^{-1}) \|_{bmo_\infty^\infty(B_{(c_0+1) \rho}(x_0))} \leq C_\rho \| \nabla_\eta f \|_{bmo(\mathbf{R}^n)}.
\]
\end{proof}

If $\nabla_x g \in bmo(\mathbf{R}^n)$ with $\operatorname{supp} \nabla_x g \subset U_\rho$, same proof of Proposition \ref{BNCV} shows that $\nabla_\eta (g \circ \psi) \in bmo(\mathbf{R}^n)$ satisfying
\[
\| \nabla_\eta (g \circ \psi) \|_{bmo(\mathbf{R}^n)} \leq C_\rho \| \nabla_x g \|_{bmo(\mathbf{R}^n)}.
\]

Let $h$ be either a scalar function or a vector field which is compactly supported in $U_\rho$, for simplicity of notations we denote $h_\psi := h \circ \psi$. If $h$ is a vector field, we denote $h_{\psi,i} := h_i \circ \psi$ for $1 \leq i \leq n$.
\subsection{Volume potential for tangential component} \label{3VPT}
Let $\rho \in (0,\rho_\ast/2)$ and fix $1 \leq j \leq m$. 
Since $\varphi_j v_2 \in vBMO(\Omega)$ with $\operatorname{supp} \,\varphi_j v_2 \subset U_{\rho,j} \cap \overline{\Omega}$, Proposition \ref{2E} implies that $(\varphi_j v_2)_{\mathrm{even}} \in BMOL^1(\mathbf{R}^n)$. By the product estimate for $bmo$ functions \cite[Theorem 13]{GigaGu2}, we see that $w_j^{\mathrm{tan}} = Q (\varphi_j v_2) \in BMOL^1(\mathbf{R}^n)$ with $\operatorname{supp} w_j^{\mathrm{tan}} \subset U_{\rho,j}$. For simplicity of notations, we set $v_{2,j} := (\varphi_j v_2)_{\mathrm{even}}$.

Let $\psi: V_{4 \rho} \mapsto U_{4 \rho,j}$ be the normal coordinate change defined by (\ref{NCC}) in Section \ref{LOC}.
Since $\rho < \rho_\ast/2$, we have that
\[
V_{4 \rho} \subset B_{12 \rho}(0) \subset B_{24 L_\ast \rho}(0) \subset V_{\rho_0}, \; \; U_{4 \rho,j} \subset B_{12 \rho}(z_j) \subset B_{24 L_\ast \rho}(z_j) \subset U_{\rho_0,j}.
\]
By Proposition \ref{BNCS} and \ref{BNCV}, we see that $\psi$, in this case, is a local $C^2$-diffeomorphism that preserves $bmo$ estimates for functions or vector fields compactly supported in $V_{4 \rho}$. As a result, $(v_{2,j})_\psi \in BMOL^1(\mathbf{R}^n)$ satisfies the estimate
\[
\| (v_{2,j})_\psi \|_{BMOL^1(\mathbf{R}^n)} \leq C_\rho \| v_{2,j} \|_{BMOL^1(\mathbf{R}^n)}.
\]
Note that similar conclusions hold if we consider $\psi^{-1}: U_{4 \rho,j} \mapsto V_{4 \rho}$ instead.

\begin{proof}[Proposition \ref{VPT}]
For $1 \leq i \leq n$ and $1 \leq k \leq n-1$, we define
\[
\bigg( \frac{\partial \eta_k}{\partial x_i} \bigg)_\ast := E_{\mathrm{even}} \; r_{V_{4 \rho} \cap \mathbf{R}_{+}^n} \; \bigg( \frac{\partial \eta_k}{\partial x_i} \bigg)_\psi \, \; \text{ and } \, \; g_{i,k} := \bigg( \frac{\partial \eta_k}{\partial x_i} \bigg)_\ast \cdot (v_{2,j})_{\psi,i}.
\]
We consider 
\begin{align*}
( \operatorname{div}_x w_j^{\mathrm{tan}} )_{\psi,\ast} &:= \underset{1 \leq k \leq n-1}{\sum_{1 \leq i \leq n,}} \left\{ \partial_{\eta_k} g_{i,k} -  \partial_{\eta_k} \bigg( \frac{\partial \eta_k}{\partial x_i} \bigg)_\psi \cdot (v_{2,j})_{\psi,i} \right\}\\
&- \underset{1 \leq k \leq n-1}{\sum_{1 \leq i \leq n,}} \bigg( \frac{\partial \eta_k}{\partial x_i} \bigg)_\psi \cdot \left( \sum_{1 \leq l \leq n} (v_{2,j})_{\psi,l} \cdot \bigg( \frac{\partial \eta_n}{\partial x_l} \bigg)_\psi \right) \cdot \frac{\partial^2 x_i}{\partial \eta_k \partial \eta_n}
\end{align*}
in $V_{4 \rho} = \psi^{-1}(U_{4 \rho,j})$.
Let $L = L_0 + M$ be the operator in Proposition \ref{3R} and $L_0^{-1}$ be the operator in Lemma \ref{4P}. Let $1 \leq i \leq n$ and $1 \leq k \leq n-1$. We set
\[
q_{j,1,\psi}^{i,k} := - \theta_\rho L_0^{-1} \partial_{\eta_k} g_{i,k}
\]
where $\theta_\rho$ is the cut-off function defined in the proof of Lemma \ref{3P}. 
There exists $\overline{( \frac{\partial \eta_k}{\partial x_i} )_\ast} \in C^{0,1}(\mathbf{R}^n)$, see e.g. \cite[Theorem 13]{GigaGu2}, such that the restriction of $\overline{( \frac{\partial \eta_k}{\partial x_i} )_\ast}$ in $V_{4 \rho}$ equals $( \frac{\partial \eta_k}{\partial x_i} )_\ast$ and $\| \overline{( \frac{\partial \eta_k}{\partial x_i} )_\ast} \|_{C^{0,1}(\mathbf{R}^n)} \leq \| ( \frac{\partial \eta_k}{\partial x_i} )_\ast \|_{C^{0,1}(V_{4 \rho})}$. 
By viewing $g_{i,k}$ as $\overline{( \frac{\partial \eta_k}{\partial x_i} )_\ast} \cdot (v_{2,j})_{\psi,i}$, we see that $g_{i,k} \in BMOL^1(\mathbf{R}^n)$. Hence, $q_{j,1,\psi}^{i,k} \in L^\infty(\mathbf{R}^n)$ is well-defined which satisfies all conditions in Lemma \ref{4P}. Let $f_{j,1,\psi}^{i,k} := M \theta_\rho L_0^{-1} \partial_{\eta_k} g_{i,k}$. We can define
\[
q_{j,1}^{i,k} := q_{j,1,\psi}^{i,k} \circ \psi^{-1}, f_{j,1}^{i,k} := f_{j,1,\psi}^{i,k} \circ \psi^{-1}
\]
in $U_{\rho_0,j}$. Notice that $\operatorname{supp} q_{j,1}^{i,k}, \operatorname{supp} f_{j,1}^{i,k} \subset U_{4 \rho,j}$, we can indeed treat $q_{j,1}^{i,k}, f_{j,1}^{i,k}$ as functions defined in $\mathbf{R}^n$ where their values outside $U_{4 \rho,j}$ equal zero. Proposition \ref{BNCV} shows that $\nabla_x q_{j,1}^{i,k} \in BMO(\mathbf{R}^n)$ satisfies the estimate
\[
[\nabla_x q_{j,1}^{i,k}]_{BMO(\mathbf{R}^n)} \leq C_\rho \| \nabla_\eta q_{j,1,\psi}^{i,k} \|_{BMOL^2(\mathbf{R}^n)} \leq C_\rho \|g_{i,k}\|_{BMOL^2(\mathbf{R}^n)}.
\]
Let $p_{j,1}^{i,k} := E \ast f_{j,1}^{i,k}$. By Lemma \ref{4P} again, we can prove that
\[
\| p_{j,1}^{i,k} \|_{L^\infty(\mathbf{R}^n)} + \| \nabla_x p_{j,1}^{i,k} \|_{L^\infty(\mathbf{R}^n)} \leq C_\rho \| f_{j,1,\psi}^{i,k} \|_{L^p(V_{2 \rho})} \leq C_\rho \| g_{i,k} \|_{L^p(\mathbf{R}^n)}
\]
with some $p > n$. Thus, $p_{j,1}^{i,k}$ is well-defined. By Proposition \ref{2E}, we have that
\begin{align*}
\| g_{i,k} \|_{BMOL^1(\mathbf{R}^n)} &\leq C_\rho \| v_{2,j} \|_{BMOL^1(\mathbf{R}^n)} \leq C_\rho \| v \|_{vBMO(\Omega)}.
\end{align*}
Hence, by an interpolation (cf. \cite[Lemma 5]{BGST}),
\[
\| g_{i,k} \|_{L^p(\mathbf{R}^n)} \leq C_\rho \| v \|_{vBMO(\Omega)}
\]
for any $1<p<\infty$.

For lower order term $q_{j,2,\psi}^{i,k} := \partial_{\eta_k} ( \frac{\partial \eta_k}{\partial x_i} )_\psi \cdot (v_{2,j})_{\psi,i}$, we set $q_{j,2}^{i,k} := q_{j,2,\psi}^{i,k} \circ \psi^{-1}$ in $U_{\rho_0,j}$. Similar as $q_{j,1}^{i,k}$, we can treat $q_{j,2}^{i,k}$ as a function in $\mathbf{R}^n$ with value zero outside $U_{\rho,j}$ since $\operatorname{supp} q_{j,2}^{i,k} \subset U_{\rho,j}$. Define $p_{j,2}^{i,k} := E \ast q_{j,2}^{i,k}$. Since $E$ and $\nabla_x E$ are locally integrable, we have that
\[
\| p_{j,2}^{i,k} \|_{L^\infty(\mathbf{R}^n)} + \| \nabla_x p_{j,2}^{i,k} \|_{L^\infty(\mathbf{R}^n)} \leq C_\rho \| q_{j,2,\psi}^{i,k} \|_{L^p(V_{\rho})} \leq C_\rho \| v_{2,j} \|_{L^p(U_{\rho,j})}
\]
for some $p>n$. By an interpolation (cf. \cite[Lemma 5]{BGST}) again, we deduce that
\[
\| p_{j,2}^{i,k} \|_{L^\infty(\mathbf{R}^n)} + \| \nabla_x p_{j,2}^{i,k} \|_{L^\infty(\mathbf{R}^n)} \leq C_\rho \| v \|_{vBMO(\Omega)}.
\]
This argument also holds for lower order term 
\[
q_{j,3,\psi}^{i,k} := \bigg( \frac{\partial \eta_k}{\partial x_i} \bigg)_\psi \cdot \left( \sum_{1 \leq l \leq n} (v_{2,j})_{\psi,l} \cdot \bigg( \frac{\partial \eta_n}{\partial x_l} \bigg)_\psi \right) \cdot \frac{\partial^2 x_i}{\partial \eta_k \partial \eta_n}.
\]
By letting $q_{j,3}^{i,k} := q_{j,3,\psi}^{i,k} \circ \psi^{-1}$ in $U_{\rho_0,j}$ and $p_{j,3}^{i,k} := E \ast q_{j,3}^{i,k}$, we can show that 
\[
\| p_{j,3}^{i,k} \|_{L^\infty(\mathbf{R}^n)} + \| \nabla_x p_{j,3}^{i,k} \|_{L^\infty(\mathbf{R}^n)} \leq C_\rho \| v \|_{vBMO(\Omega)}.
\]

Set
\[
p_j^{\mathrm{tan}} := \underset{1 \leq k \leq n-1}{\sum_{1 \leq i \leq n,}} \big( q_{j,1}^{i,k} + p_{j,1}^{i,k} + p_{j,2}^{i,k} + p_{j,3}^{i,k} \big).
\]
Since a direct calculation implies that
\begin{align*}
(\operatorname{div}_x w_j^{\mathrm{tan}})_{\psi} &= \underset{1 \leq k \leq n-1}{\sum_{1 \leq i \leq n,}} \bigg( \frac{\partial \eta_k}{\partial x_i} \bigg)_\psi \cdot \partial_{\eta_k} (v_{2,j})_{\psi,i} \\
&- \underset{1 \leq k \leq n-1}{\sum_{1 \leq i \leq n,}} \bigg( \frac{\partial \eta_k}{\partial x_i} \bigg)_\psi \cdot \left( \sum_{1 \leq l \leq n} (v_{2,j})_{\psi,l} \cdot \bigg( \frac{\partial \eta_n}{\partial x_l} \bigg)_\psi \right)\cdot \frac{\partial^2 x_i}{\partial \eta_k \partial \eta_n}
\end{align*}
in normal coordinate in $V_{4 \rho} = \psi^{-1}(U_{4 \rho,j})$, it is easy to see that
\[
- \Delta_x p_j^{\mathrm{tan}} = \operatorname{div} w_j^{\mathrm{tan}}
\]
in $U_{2 \rho,j}\cap \Omega$.
Calculations above ensures that
\[
[\nabla_x p_j^{\mathrm{tan}}]_{BMO(\mathbf{R}^n)} \leq C_\rho \| v \|_{vBMO(\Omega)}.
\]
Since $\operatorname{supp} q_{j,1}^{i,k} \subset U_{4 \rho,j}$, we consider $x \in \Gamma$ and $r < \rho$ such that $B_r(x) \cap U_{4 \rho,j} \neq \emptyset$. By change of variables $y = \psi(\eta)$ in $U_{4 \rho,j}$, we deduce that
\[
\int_{B_r(x) \cap U_{4 \rho,j}} | \nabla_y q_{j,1}^{i,k} \cdot \nabla_y d| \, dy \leq C \int_{B_{L_\ast r}(\zeta)} | \partial_{\eta_n} q_{j,1,\psi}^{i,k} | \, d\eta
\]
where $\zeta = \psi^{-1}(x)$ and $\zeta_n = 0$. By Lemma \ref{4P}, we see that 
\[
\int_{B_{L_\ast r}(\zeta)} | \partial_{\eta_n} q_{j,1,\psi}^{i,k} | \, d\eta \leq r^n C_\rho \| v \|_{vBMO(\Omega)}.
\]
Since $\nabla_x p_{j,l}^{i,k} \in L^\infty(\mathbf{R}^n)$ for $l = 1,2,3$, we finally obtain that
\[
\frac{1}{r^n} \int_{B_r(x)} | \nabla_y p_j^{\mathrm{tan}} \cdot \nabla_y d | \, dy \leq C_\rho \| v \|_{vBMO(\Omega)}.
\]
\end{proof}

\subsection{Volume potential for normal component} \label{3VPN} 
Consider $\rho \in (0,\rho_\ast/2)$ and $1 \leq j \leq m$. We let $g_j := \nabla d \cdot (\varphi_j v_2)_{\mathrm{odd}}$. Since $\varphi_j v_2 \in vBMO(\Omega)$ with $\operatorname{supp} \varphi_j v_2 \subset U_{\rho,j} \cap \overline{\Omega}$, by Proposition \ref{2E} we see that $g_j \in BMO(\mathbf{R}^n) \cap b^\nu(\Gamma)$. In particular, we have the estimate
\[
[g_j]_{BMO(\mathbf{R}^n)} + [g_j]_{b^\nu(\Gamma)} \leq C_\rho \| v \|_{vBMO(\Omega)}.
\]
Considering the normal coordinate in $U_{4 \rho,j}$, $g_j$ is odd in $\eta_n$. Note that $w_j^{\mathrm{nor}} = g_j \nabla d$.
\begin{proof}[Proposition \ref{2LP}]
Since $\nabla d \in C^1(U_{\rho_0,j})$, by Proposition \ref{2E} we have that
\[
[w_j^{\mathrm{nor}}]_{BMO(\mathbf{R}^n)} \leq C \|\nabla d\|_{C^\gamma(U_{\rho_0,j})}  \|g_j\|_{BMOL^1(\mathbf{R}^n)} \leq C_\rho \| v \|_{vBMO(\Omega)}.
\]
We note that
\[
	\operatorname{div}_x w_j^{\mathrm{nor}} = \nabla_x g_j \cdot \nabla_x d + g_j \Delta_x d.
\]
Let $g_{j,\psi} := g_j \circ \psi$ in $U_{\rho_0,j}$. We may treat $g_{j,\psi}$ as a function in $\mathbf{R}^n$ with value zero outside $V_\rho$. By Proposition \ref{BNCS}, we have that
\[
[g_{j,\psi}]_{BMO(\mathbf{R}^n)} \leq C_\rho \| g_j \|_{BMOL^1(\mathbf{R}^n)}.
\]
In normal coordinate, $\nabla_x g_j \cdot \nabla_x d=\partial_{\eta_n} g_{j,\psi}$.
We introduce the operator $L=L_0+M$ in Proposition \ref{3R}. Since $g_{j,\psi} \in Z_\rho$, we set
\[
	p_{1,j,\psi} := \theta_\rho L^{-1}_0 \partial_{\eta_n} g_{j,\psi} 
\]
where $\theta_\rho$ is the cut-off function of $V_{2 \rho}$ in the proof of Lemma \ref{3P}. $p_{1,j,\psi}$ satisfies all conditions in Lemma \ref{3P}. 
Set $f_{j,\psi} := - M\theta_\rho L^{-1}_0 \partial_{\eta_n} g_{j,\psi}$. We define 
\[
p_{1,j} := p_{1,j,\psi} \circ \psi^{-1}, f_j := f_{j,\psi} \circ \psi^{-1}
\]
in $U_{\rho_0,j}$. Notice that $p_{1,j} \in L^\infty(\mathbf{R}^n)$ and $f_j \in L^p(\mathbf{R}^n)$ with some $p>n$. By Proposition \ref{BNCV},
\[
[\nabla_x p_{1,j}]_{BMO(\mathbf{R}^n)} \leq C_\rho [\nabla_\eta p_{1,j,\psi}]_{BMO(\mathbf{R}^n)} \leq C_\rho [g_{j,\psi}]_{BMO(\mathbf{R}^n)}.
\]

Set
\[
	p_j^{\mathrm{nor}} = p_{1,j} + p_{2,j} + p_{3,j}
\]
with $p_{2,j} = E \ast f_j$ and $p_{3,j}=E \ast (g_j \Delta_x d)$. This $p_j^{\mathrm{nor}}$ satisfies all desired properties required. For lower order terms $p_{2,j}$ and $p_{3,j}$, we have that
\[
\| p_2 \|_{L^\infty(\mathbf{R}^n)} + \| \nabla p_2 \|_{L^\infty(\mathbf{R}^n)} + \| \nabla p_3 \|_{L^\infty(\mathbf{R}^n)} + \| p_3 \|_{L^\infty(\mathbf{R}^n)} \leq C_\rho \| g_j \|_{L^p(\mathbf{R}^n)}
\]
as $E$ and $\nabla_x E$ are both locally integrable. By an interpolation (cf. \cite[Lemma 5]{BGST}), we obtain that
\[
[\nabla_x p_j^{\mathrm{nor}}]_{BMO(\mathbf{R}^n)} \leq C_\rho \| g_j \|_{BMOL^1(\mathbf{R}^n)} \leq C_\rho \| v \|_{vBMO(\Omega)}.
\]
Since $\operatorname{supp} p_{1,j} \subset U_{\rho,j}$, we consider $x \in \Gamma$ and $r<\rho$ such that $B_r(x) \cap U_{\rho,j} \neq \emptyset$. Set $\zeta = \psi^{-1}(x)$ with $\zeta_n = 0$. Consider change of variable $y = \psi(\eta)$ in $U_{4 \rho,j}$, by Lemma \ref{3P} we see that
\[
\int_{B_r(x) \cap U_{\rho,j}} | \nabla_y d \cdot \nabla_y p_{1,j} | \, dy \leq C \int_{B_{L_\ast r}(\zeta)} | \partial_{\eta_n} p_{1,j,\psi} | \, d\eta \leq C_\rho [g_{j,\psi}]_{BMO(\mathbf{R}^n)}.
\]
By the $L^\infty$-estimates of $\nabla_y p_2$ and $\nabla_y p_3$, we get that
\[
\frac{1}{r^n} \int_{B_r(x)} |\nabla_y d \cdot \nabla_y p_j^{\mathrm{nor}}| \, dy \leq C_\rho \| v \|_{vBMO(\Omega)}.
\]
Finally, a simple substitution shows that
\begin{align*}
- \Delta_x p_j^{\mathrm{nor}} = \nabla_x d \cdot \nabla_x g_j - f_j + f_j + g_j \Delta_x d = \operatorname{div}_x w_j^{\mathrm{nor}}
\end{align*}
in $U_{2 \rho}(z_0)\cap\Omega$.
\end{proof}
%

\section{Neumann problem with bounded data} 
\label{NPB}
We consider the Neumann problem for the Laplace equation problem \eqref{1NP} for the Laplace equation.
 If $\Omega$ is a smooth bounded domain, as well-known, for $g\in H^{-1/2}(\Gamma)$, there is a unique (up to constant) weak solution $u \in H^1(\Omega)$ provided that $g$ fulfills the compatibility condition
\begin{equation} \label{NC}
	\int_\Gamma g \, d \mathcal{H}^{n-1} = 0;
\end{equation}
see e.g.\ \cite{LiMa}.
 The main goal of this section is to prove that $\nabla u$ belongs to $vBMO^{\infty,\infty}(\Omega)$ provided that $g \in L^\infty(\Gamma)$.
 In other words, we prove Lemma \ref{EN}.

To prove Lemma \ref{EN}, we represent the solution by using the Neumann-Green function.
Let $N(x,y)$ be the Green function, i.e., a solution $v$ of
\begin{align*} 
	-\Delta_x v &= \delta(x-y) - |\Omega|^{-1}&\text{in}& \quad\Omega \\
	\frac{\partial v}{\partial\mathbf{n}_x} &= 0&\text{on}&\quad\partial\Omega
\end{align*}
for $y\in\Omega$.
 It is easy to see that the solution $u$ of \eqref{1NP} satisfying $\int_\Omega u \, dx=0$ is given as
\[
	u(x) = \int_\Gamma N(x, y) g(y) \, d\mathcal{H}^{n-1} (y).
\]
The function $N$ is decomposed as
\[
	N(x, y) = E(x - y) + h(x, y),
\]
where $h\in C^\infty(\Omega\times\Omega)$ is a milder part.
 We recall $h(x,y)=h(y,x)$ and
\[
	\sup_{x\in\Omega} \int_\Omega \left| \nabla^k_y h(x, y) \right|^{1+\delta} \, dy < \infty
\]
for $k=0,1,2$ with some $\delta>0$;
see \cite[Lemma 3.1]{GGH}.
In particular, by applying the standard $L^p$ estimate for the Neumann problem in the proof of \cite[Lemma 3.1]{GGH} to $\nabla_y h( \cdot,y)$, we can deduce that
\[
	\sup_{x \in \Omega} \int_\Omega\left|\nabla_x \nabla_y h(x,y)\right|^{1+\delta} \, dy<\infty.
\]
Hence, we see that $\nabla_x h(x,\cdot) \in W^{1,1+\delta}(\Omega_y)$. By the trace theorem for Sobolev space $W^{1,1+\delta}(\Omega_y)$, this yields
\begin{equation} \label{HE}
	M_0 := \sup_{x\in\Omega} \int_\Gamma \left|\nabla_x h(x,y)\right|^{1+\delta} \, d\mathcal{H}^{n-1}(y) < \infty.
\end{equation}
We decompose $u$ as
\[
	u(x) = E * (\delta_\Gamma \otimes g)
	+ \int_\Gamma h(x,y) g(y) \, d\mathcal{H}^{n-1}(y) 
	= I + I\!\!I.
\]
The estimate \eqref{HE} yields
\[
	\| \nabla I\!\!I \|_{L^\infty(\Omega)} 
	\leq M_0 \|g\|_{L^\infty(\Gamma)}, 
\]
so to prove Lemma \ref{EN} it suffices to estimate $\nabla I$.
In other words, Lemma \ref{EN} follows from the next lemma.
\begin{lemma} \label{NM}
Let $\Omega$ be a bounded domain in $\mathbf{R}^n$ with $C^2$ boundary $\Gamma=\partial\Omega$.
\begin{enumerate}
\item[(i)] ($BMO$ estimate)
 There exists a constant $C_1$ such that
\begin{equation} \label{BMOE}
	\left[ \nabla \left( E * (\delta_\Gamma \otimes g) \right) \right]_{BMO(\mathbf{R}^n)}
	\leq C_1 \|g\|_{L^\infty(\Gamma)}
\end{equation}
for all $g\in L^\infty(\Gamma)$.
\item[(i\hspace{-1pt}i)] ($L^\infty$ estimate for normal component)
There exists a constant $C_2$ such that
\begin{equation} \label{TE}
	\left\| \nabla d \cdot \nabla \left( E * (\delta_\Gamma \otimes g) \right) \right\|_{L^\infty(\Gamma_{\rho_0}^{\mathbf{R}^n} \cap \Omega)}
	\leq C_2 \|g\|_{L^\infty(\Gamma)}
\end{equation}
for all $g\in L^\infty(\Gamma)$.
\end{enumerate}
\end{lemma}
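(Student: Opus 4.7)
The plan is to implement the strategy already sketched in the introduction.  For part (i), write $g_e$ for the normal extension of $g$ into the $2\delta$-neighborhood of $\Gamma$ with $\nabla d \cdot g_e = 0$, and $g_{e,c} = \theta_d g_e$.  The key identity
\[
\delta_\Gamma \otimes g = \operatorname{div}(g_{e,c} 1_\Omega \nabla d) - 1_\Omega \operatorname{div}(g_{e,c} \nabla d)
\]
follows from the distributional product rule applied to $g_{e,c} 1_\Omega \nabla d$ together with $\nabla 1_\Omega = -\mathbf{n}\, \delta_\Gamma$ and $|\nabla d| = 1$ near $\Gamma$.  Convolving with $\nabla E$, the first piece $\nabla E * \operatorname{div}(g_{e,c} 1_\Omega \nabla d)$ is handled by the Fefferman-Stein $L^\infty$-$BMO$ estimate \cite{FS} for the Calder\'on-Zygmund operator $\nabla E * \operatorname{div}$ (whose components are second derivatives of the Newton potential), yielding a bound by $C\|g_{e,c} \nabla d\|_{L^\infty(\mathbf{R}^n)} \leq C'\|g\|_{L^\infty(\Gamma)}$.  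The second piece $\nabla E * \bigl(1_\Omega \operatorname{div}(g_{e,c} \nabla d)\bigr)$ is even easier: the formula from the introduction shows $1_\Omega \operatorname{div}(g_{e,c} \nabla d) = 1_\Omega\bigl(g_{e,c} \Delta d + \delta^{-1}\theta'(d/\delta) g_e\bigr)$ is in $L^\infty(\mathbf{R}^n)$ with compact support, so convolution with the locally integrable kernel $\nabla E$ (which decays like $|x|^{1-n}$ at infinity) produces a function in $L^\infty(\mathbf{R}^n) \subset BMO(\mathbf{R}^n)$.  Summing gives \eqref{BMOE}.

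For part (ii), I would evaluate the normal component directly.  For $x \in \Gamma_{\rho_0}^{\mathbf{R}^n} \cap \Omega$,
\[
\nabla d(x) \cdot \nabla_x\bigl(E * (\delta_\Gamma \otimes g)\bigr)(x) = c_n \int_\Gamma \frac{(y-x)\cdot \nabla d(x)}{|x-y|^n}\, g(y)\, d\mathcal{H}^{n-1}(y).
\]
Split this integral at $|y - \pi x| = \delta_0$ for some fixed small $\delta_0 \leq \rho_0/2$.  The outer part is bounded by $C\|g\|_{L^\infty(\Gamma)}$ since the kernel is bounded away from the singularity and $\mathcal{H}^{n-1}(\Gamma) < \infty$.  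For the inner part, work in a normal chart at $\pi x$ representing $\Gamma$ as the graph $x_n = h(x')$ with $h(0') = 0$, $\nabla' h(0') = 0$, $h \in C^2$, so $x = (0', d(x))$ and $y = (y', h(y'))$ in these coordinates.  The Taylor expansion $h(y') = O(|y'|^2)$ together with the $C^1$ regularity of $\nabla d$ in $\Gamma_{\rho_0}^{\mathbf{R}^n}$ (with $\nabla d(\pi x)$ in the normal direction) yields
\[
\bigl|(y-x) \cdot \nabla d(x)\bigr| \leq C\bigl(d(x) + |y'|^2\bigr), \qquad |x-y|^2 \geq c\bigl(d(x)^2 + |y'|^2\bigr).
\]
The integrand is then dominated by $\|g\|_{L^\infty(\Gamma)}$ times $(d(x) + |y'|^2)(d(x)^2+|y'|^2)^{-n/2}$.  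Rescaling $y' = d(x) s$ in the first summand and a direct integration of the second show the resulting integral is bounded independently of $d(x) \in (0, \rho_0)$, proving \eqref{TE}.

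The main obstacle is the geometric cancellation exploited in part (ii): dotting $\nabla_x E(x-y)$ against $\nabla d(x)$ extracts a numerator of size $d(x) + |y-\pi x|^2$ rather than $|x-y|$, and it is this cancellation that makes the normal derivative of the single layer potential bounded on a $C^2$ surface.  Once this pointwise estimate is in place the rest of the argument is routine; part (i) is merely Calder\'on-Zygmund theory applied to the distributional splitting of $\delta_\Gamma \otimes g$, and the remainder terms in both parts are controlled by the local integrability of $\nabla E$ and the compact support of the bulk error.
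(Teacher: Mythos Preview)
Your proof of part (i) is essentially identical to the paper's: both use the distributional identity $\delta_\Gamma\otimes g=\operatorname{div}(g_{e,c}1_\Omega\nabla d)-1_\Omega\operatorname{div}(g_{e,c}\nabla d)$, then apply the $L^\infty$--$BMO$ bound for the Calder\'on--Zygmund operator $\nabla E*\operatorname{div}$ to the first term and local integrability of $\nabla E$ to the second.

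For part (ii) your argument is correct but genuinely different from the paper's. The paper does not estimate the kernel $\nabla d(x)\cdot\nabla E(x-y)$ directly; instead it writes $\nabla d(x)=\nabla d(y)+\bigl(\nabla d(x)-\nabla d(y)\bigr)$ and splits the integral into $I_2=\int_\Gamma\frac{\partial E}{\partial\mathbf n_y}(x-y)g\,d\mathcal H^{n-1}$ and a remainder $I_1$. The remainder is trivial since $|\nabla d(x)-\nabla d(y)|\le M|x-y|$ reduces the kernel to $|x-y|^{2-n}$, which is integrable over $\Gamma$. The main term $I_2$ is the double-layer potential, and the paper controls it via an auxiliary lemma (Lemma~\ref{EP}) whose proof combines the Gauss identity $\int_\Gamma\frac{\partial E}{\partial\mathbf n_y}(x-y)\,d\mathcal H^{n-1}=-1$ with positivity of the leading part $K_0^i=(x_n-h_i(x'))/\bigl(|x'-y'|^2+(x_n-h_i(y'))^2\bigr)^{n/2}$ of the local kernel: since $K_0^i\ge0$ and $\int(K_0^i+R_i)$ is known exactly, $\int K_0^i$ is bounded. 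Your route bypasses all of this by observing directly that in the normal chart $(y-x)\cdot\nabla d(x)=h(y')-d(x)=O(|y'|^2)+O(d(x))$, then integrating the Poisson-type and $|y'|^{2-n}$-type pieces separately. This is the standard pointwise argument for boundedness of the double-layer kernel on a $C^2$ surface; it is more elementary and self-contained than the paper's Gauss-theorem trick, at the cost of being slightly less structural (the paper isolates the statement $\sup_{x\in\Omega}\int_\Gamma|\partial E/\partial\mathbf n_y(x-y)|\,d\mathcal H^{n-1}<\infty$ as a standalone fact). Both approaches ultimately exploit the same second-order tangency of $\Gamma$.
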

Here $E*(\delta_\Gamma\otimes g)$ is defined as $E*(\delta_\Gamma\otimes g)(x) := \int_\Gamma E(x-y)g(y) \, d\mathcal{H}^{n-1}(y)$ for a function $g$ on $\Gamma$.
We shall prove Lemma \ref{NM} in following subsections.

\subsection{$BMO$ estimate} \label{BMOES} 
To see the idea, we shall prove \eqref{BMOE} when $\Gamma$ is flat.
 Let $\Gamma=\partial\mathbf{R}^n_+$ and $\mathbf{R}^n_+=\left\{(x_1,\ldots,x_n)\mid x_n>0 \right\}$.
 In this case,
\[
	\nabla \left( E*(\delta_\Gamma\otimes g) \right)
	= \nabla\partial_{x_n} E*1_{\mathbf{R}^n_+} \widetilde{g}
\]
where $\widetilde{g} \in L^\infty(\mathbf{R}^n)$ is defined by $\widetilde{g} (x',x_n) := g(x',0)$ for any $x \in \mathbf{R}^n$.
By the $L^\infty$-$BMO$ estimate for the singular integral operator \cite[Theorem 4.2.7]{GraM}, we obtain \eqref{BMOE} when $\Gamma=\partial\mathbf{R}^n_+$.
%
\begin{proof}[Lemma \ref{NM} (i)]
Note that the signed distance function $d$ is $C^2$ in $\Gamma_{\rho_0}^{\mathbf{R}^n}$, see \cite[Section 14.6]{GT}. Let $\delta \in \rho_0/2$.
We take a $C^2$ cut-off function $\theta\geq 0$ such that $\theta(\sigma)=1$ for $\sigma \leq 1$ and $\theta(\sigma)=0$ for $\sigma \geq 2$.
By the choice of $\delta$, we see that $\theta_d=\theta(d/\delta)$ is $C^2$ in $\mathbf{R}^n$.
We extend $g\in L^\infty(\Gamma)$ to $g_e \in L^\infty(\Gamma_{2 \delta}^{\mathbf{R}^n})$ by setting
\[
g_e(x) := g(\pi x)
\]
for any $x \in \Gamma_{2 \delta}^{\mathbf{R}^n}$ with $\pi x$ denoting the projection of $x$ on $\Gamma$. For $x \in \Gamma_{2 \delta}^{\mathbf{R}^n}$, by considering the normal coordinate $x = \psi(\eta)$ in $U_{2 \delta}(\pi x)$, we have that
\[
(\nabla_x d)_\psi \cdot (\nabla_x g_e)_\psi = \partial_{\eta_n} (g_e)_\psi = 0
\]
as $(g_e)_\psi(\eta',\alpha) = (g_e)_\psi(\eta',\beta)$ for any $|\eta'| < 2 \delta$ and $\alpha,\beta \in (-2 \delta,2 \delta)$.
Hence, we see that $\nabla d\cdot\nabla g_e=0$ in $\Gamma_{2 \delta}^{\mathbf{R}^n}$.

Let us consider $g_{e,c} := \theta_d g_e$. A key observation is that
\begin{align*}	
	\delta_\Gamma \otimes g &= (\nabla 1_\Omega \cdot \nabla d) g_{e,c} \\
	&= \operatorname{div} (g_{e,c} 1_\Omega \nabla d) - 1_\Omega \operatorname{div} (g_{e,c} \nabla d), \\
	\operatorname{div} (g_{e,c} \nabla d) &= g_{e,c} \Delta d + \nabla d \cdot \nabla g_{e,c} = g_{e,c} \Delta d + \frac{\theta'(d/\delta)}{\delta} g_e.
\end{align*}
Thus
\[
	\nabla E*(\delta_\Gamma \otimes g) = \nabla\operatorname{div} \left(E*( g_{e,c} 1_\Omega \nabla d) \right)
	-\nabla E* \left( 1_\Omega g_e f_{\theta,\delta} \right) = I_1 + I_2
\]
where $f_{\theta,\delta} := \theta_d \Delta d + \frac{\theta'(d/\delta)}{\delta}$. 
By the $L^\infty$-$BMO$ estimate for the singular integral operator \cite[Theorem 4.2.7]{GraM}, the first term is estimated as
\[
	[I_1]_{BMO(\mathbf{R}^n)} \leq C \| g_{e,c} \nabla d \|_{L^\infty(\Omega)} \leq C \|g\|_{L^\infty(\Gamma)}.
\]
Since
\[
	A = \sup_{x \in \mathbf{R}^n \setminus \{0\}} |x|^{n-1} \left| \nabla E(x) \right| < \infty,
\]
for $x \in \mathbf{R}^n$ with $d(x,\Omega) = \inf_{y \in \Omega} |x-y| < 1$ we have that
\[
	\left| I_2(x) \right| \leq A \int_\Omega \frac{1}{|x-y|^{n-1}} \, dy \| f_{\theta,\delta} \|_{L^\infty(\Gamma_{2 \delta}^{\mathbf{R}^n})} \| g_{e,c} \|_{L^\infty(\Gamma_{2 \delta}^{\mathbf{R}^n})} \leq C_{\Omega,\delta} \|g\|_{L^\infty(\Gamma)}
\]
with $C_{\Omega,\delta}$ depending only on $\Omega$ and $\delta$.
For $x \in \mathbf{R}^n$ with $d(x,\Omega) = \inf_{y \in \Omega} |x-y| \geq 1$, the above estimate is trivial as $|x-y|^{-(n-1)} \leq 1$ for any $y \in \Omega$.
The proof of (i) is now complete.
\end{proof}

\subsection{Estimate for normal derivative} \label{END} 

We shall estimate normal derivative of $E$.
\begin{lemma} \label{EP} 
Let $\Omega$ be a bounded domain in $\mathbf{R}^n$ with $C^2$ boundary $\Gamma$.
 Then
\begin{enumerate}
\item[(i)]
\[
	\int_\Gamma \frac{\partial E}{\partial\mathbf{n}_y} (x-y) \, d\mathcal{H}^{n-1}(y) = -1
	\quad\text{for}\quad x \in \Omega,
\]
\item[(i\hspace{-1pt}i)]
\[
	\sup_{x\in\Omega} \int_\Gamma \left| \frac{\partial E}{\partial\mathbf{n}_y} (x-y) \right| \, d\mathcal{H}^{n-1}(y) < \infty.
\]
\end{enumerate}
\end{lemma}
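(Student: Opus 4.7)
My plan is to handle the two parts by distinct classical arguments. Part (i) is a direct application of Green's identity to the fundamental solution on a punctured domain. Part (ii) is the uniform $L^1(\Gamma)$ boundedness of the double-layer potential kernel, which I shall verify by a local chart computation near the boundary point closest to $x$.

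For (i), fix $x \in \Omega$ and choose $\varepsilon > 0$ small enough that $\overline{B_\varepsilon(x)} \subset \Omega$. Since $y \mapsto E(x-y)$ is harmonic on $\Omega_\varepsilon := \Omega \setminus \overline{B_\varepsilon(x)}$, the divergence theorem yields
\[
0 = \int_{\Omega_\varepsilon} \Delta_y E(x-y) \, dy = \int_\Gamma \frac{\partial E(x-y)}{\partial \mathbf{n}_y} \, d\mathcal{H}^{n-1}(y) + \int_{\partial B_\varepsilon(x)} \nabla_y E(x-y) \cdot \frac{x-y}{\varepsilon} \, d\mathcal{H}^{n-1}(y),
\]
where $(x-y)/\varepsilon$ is the outward unit normal of $\Omega_\varepsilon$ on $\partial B_\varepsilon(x)$. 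From the explicit formula for $E$ one computes $\nabla_y E(x-y) = (x-y)/(n \alpha(n) |x-y|^n)$, so the integrand on $\partial B_\varepsilon(x)$ is the constant $1/(n \alpha(n) \varepsilon^{n-1})$; integrated against the surface area $n \alpha(n) \varepsilon^{n-1}$ it gives $1$, independently of $\varepsilon$. The identity in (i) then follows.

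For (ii), write the integrand as $(x-y) \cdot \mathbf{n}(y)/(n \alpha(n) |x-y|^n)$ and split into two regimes. If $d(x) \geq \rho_0/2$, then $|x-y| \geq \rho_0/2$ for all $y \in \Gamma$ and $\mathcal{H}^{n-1}(\Gamma) < \infty$ give a trivial uniform bound. Otherwise let $s := d(x) < \rho_0/2$ and $y_0 := \pi x$; use the local chart of Section \ref{LOC} at $y_0$ to parametrize $\Gamma$ near $y_0$ as the graph $\{(y', h(y')) : |y'| < r_*\}$ of a $C^2$ function with $h(0') = 0$, $\nabla' h(0') = 0$, and $x = (0', s)$ after translation and rotation. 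A second-order Taylor estimate yields $|y' \cdot \nabla' h(y') - h(y')| \leq C |y'|^2$, whence
\[
|(x-y) \cdot \mathbf{n}(y)| \leq C(|y'|^2 + s).
\]
A case distinction on whether $|h(y')| \leq s/2$ then produces the geometric lower bound $|x-y|^2 \geq c(|y'|^2 + s^2)$ for $|y'| < r_*$, with constants depending only on $\|h\|_{C^2}$ and $r_*$. The local contribution is therefore dominated by
\[
C \int_{|y'| < r_*} \frac{|y'|^2 + s}{(|y'|^2 + s^2)^{n/2}} \, dy',
\]
which, after passing to polar coordinates and rescaling $y' = su$ in the piece with $s$ in the numerator, is bounded uniformly in $s \in (0, \rho_0/2)$ thanks to the convergence of $\int_0^\infty u^{n-2}/(u^2+1)^{n/2} \, du$ and the trivial bound $r^n/(r^2+s^2)^{n/2} \leq 1$. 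The contribution from $\Gamma$ outside the chart is handled as in the first regime, since such $y$ stay at positive distance from $x$.

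The main obstacle is the uniform geometric lower bound $|x-y|^2 \geq c(|y'|^2 + s^2)$, since $h(y')$ and $s$ can have competing sizes; the case analysis must be executed carefully so that $c$ does not degenerate as $s \to 0$. Both this bound and the quadratic bound on the numerator rely essentially on $C^2$ regularity of $\Gamma$, and uniformity in $x$ (equivalently in $y_0 = \pi x$) is secured by the compactness of $\Gamma$ and the uniform local chart construction of Section \ref{LOC}.
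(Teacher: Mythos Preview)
Your proof is correct. Part (i) is the same as the paper's, just written out rigorously rather than stated distributionally.

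For part (ii) you take a genuinely different route. The paper does not directly bound the kernel; instead it writes, in each chart, $-n\alpha(n)\,\partial E/\partial\mathbf{n}_y = (K_0^i + R_i)/\omega_i$, where $K_0^i = (x_n - h_i(x'))/(|x'-y'|^2 + (x_n - h_i(y'))^2)^{n/2}$ is \emph{positive} and the Taylor remainder satisfies $|R_i| \leq C|x'-y'|^{2-n}$, which is integrable over the chart. The positive singular piece is then controlled not by a direct bound but by invoking part (i): summing $\int K_i/\omega_i$ over the near charts and adding the (harmless) far contribution gives exactly $n\alpha(n)$, and since the remainders and far piece are already estimated, $\sum_i \int K_0^i/\omega_i$ is bounded by subtraction. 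Your approach instead bounds $|(x-y)\cdot\mathbf{n}(y)| \leq C(|y'|^2 + s)$ and $|x-y|^2 \geq c(|y'|^2 + s^2)$ and integrates the quotient directly. Your argument is more self-contained (it does not need (i) to prove (ii)) and gives an explicit pointwise majorant for the kernel; the paper's positivity-plus-(i) trick avoids the delicate lower bound on $|x-y|$ altogether, at the cost of only yielding an integral bound.
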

\begin{proof}
\begin{enumerate}
\item[(i)] This follows from the Gauss divergence theorem.
 We observe that
\[
	\int_\Gamma \frac{\partial E}{\partial\mathbf{n}_y} (x-y) \, d\mathcal{H}^{n-1}(y) = \int_\Omega \Delta_y E(x-y) \, dy.
\]
Since $\Delta_y E(x-y) = -\delta(x-y)$, we obtain
\[
	\int_\Gamma \frac{\partial E}{\partial\mathbf{n}_y} (x-y) \, d\mathcal{H}^{n-1}(y) = -1
\]
for $x\in\Omega$.
\item[(i\hspace{-1pt}i)] We recall our local coordinate patches $\{U_i\}^m_{i=1}$ with $U_i=U_{\rho,i}$ as in Section \ref{LOC}.
For $x \in \Omega^\rho$ and $y \in \Gamma$, obviously $| \nabla E (x-y) | \leq C \rho^{-(n-1)}$. Let $x \in \Gamma_\rho^{\mathbf{R}^n} \cap \Omega$. If $d(x, U_i \cap \Gamma) \geq \rho$, similarly $| \nabla E (x-y) | \leq C \rho^{-(n-1)}$ for $y \in U_i \cap \Gamma$. Hence, it is sufficient to consider $U_i$ such that $d(x, U_i \cap \Gamma) < \rho$, i.e., it suffices to prove
\begin{equation*} \label{EL}
	\int_{U_i\cap\Gamma} \left| \frac{\partial E}{\partial\mathbf{n}_y}(x-y) \right| d\mathcal{H}^{n-1}(y) < \infty.
\end{equation*}
for $U_i$ such that $d(x, U_i \cap \Gamma) < \rho$.
Since $-\partial E/\partial\mathbf{n}_y(x-y)$ is invariant under translations and rotations, we can write $-\partial E/\partial\mathbf{n}_y(x-y)$ in the local coordinate. Let $U_i$ be such that $d(x, U_i \cap \Gamma) < \rho$ and denote $h_{z_i}$ by $h_i$ for simplicity. Let us observe that
\[
	-\mathbf{n} \left( y', h_i(y') \right) = \left( -\nabla' h_i(y'), 1 \right)/ \omega_i(y') 
\]
with $\omega_i(y')=\left( 1+ |\nabla' h_i(y')|^2 \right)^{1/2}$, where $\nabla'$ is the gradient in $y'$ variables.
 This implies that 
\[
	-n\alpha(n)\frac{\partial E}{\partial\mathbf{n}_y}(x-y) 
	= \frac{\sigma_i(y')}{\omega_i(y') \left( |x'-y'|^2 + \left( x_n- h_i(y') \right)^2 \right)^{n/2}}
\]
for $y \in \Gamma_i$ with
\[
	\sigma_i(y') := -\nabla' h_i(y) \cdot (x'-y') + \left(x_n - h_i(y') \right)
	\; \; \text{where} \; \; x_n > h_i(x'),\ 
	x' \in B_{3 \rho}(0').
\]
We set
\[
	K_i(x', y', x_n) = \frac{\sigma_i(y')}{\left( |x'-y'|^2 + \left( x_n - h_i(y') \right)^2 \right)^{n/2}}.
\]
By the Taylor expansion
\[
	h_i(x') = h_i(y') + \nabla' h_i(y') \cdot (x'-y') + r_i(x',y')
\]
with
\[
	r_i(x',y') = (x'-y')^{\mathrm{T}} \cdot \int^1_0 (1-\theta) \nabla'^2 h_i \left(\theta x'+(1-\theta)y' \right)d\theta \cdot (x'-y'),
\]
we obtain
\[
	\sigma_i(y') = x_n - h_i(x') + r_i(x',y')
\]
with an estimate
\begin{equation} \label{R1}
	\left| r_i(x',y') \right| \leq \| \nabla'^2 h_i \|_{L^\infty(B_{3 \rho}(0'))} |x'-y'|^2.
\end{equation}
We decompose $K_i$ into a leading term and a remainder term
\[
	K_i(x', y', x_n) = K_0^i(x', y', x_n) + R_i(x', y', x_n)
\]
with
\begin{align*}
	K_0^i(x', y', x_n) &:= \frac{x_n - h_i(x')}{\left( |x'-y'|^2 + \left( x_n - h_i(y') \right)^2 \right)^{n/2}} \\
	R_i(x', y', x_n) &:= \frac{r_i(x, y)}{\left( |x'-y'|^2 + \left( x_n - h_i(y') \right)^2 \right)^{n/2}}.
\end{align*}
The term $K_0^i$ is very singular but it is positive.
The term $R_i$ is estimated as
\[
	\left| R_i(x', y', x_n) \right| \leq \| \nabla'^2 h_i\|_{L^\infty(B_{3 \rho}(0'))} |x'-y'|^{2-n}
\]
by the estimate \eqref{R1}. 
Hence, 
\[
\int_{\Gamma \cap U_i} \left| \frac{R_i(x',y',x_n)}{\omega_i(y')} \right| \, d\mathcal{H}^{n-1}(y) \leq C \int_{B_\rho(0')} \frac{1}{|x'-y'|^{n-2}} \, dy' \leq C \rho
\]
with $C$ independent of $\rho$ and $i$.
By (i), we observe that
\begin{align*}
	n \alpha(n) 
	&= \sum_{i : d(x, U_i \cap \Gamma) < \rho} \int_{B_\rho(0')} \frac{K_i(x', y', x_n)}{\omega_i(y')} \, dy' \\
	&- n \alpha(n) \sum_{j : d(x, U_j \cap \Gamma) \geq \rho} \int_{U_j \cap \Gamma} \frac{\partial E}{\partial\mathbf{n}_y} (x-y) \, d\mathcal{H}^{n-1}(y).
\end{align*}
Since $K_0^i$ is positive for any $i$ such that $d(x,U_i \cap \Gamma) < \rho$,
\[
\sum_{i : d(x, U_i \cap \Gamma) < \rho} \int_{B_\rho(0')} \frac{K_0^i(x', y', x_n)}{\omega_i(y')} \, dy' \leq n \alpha(n) \cdot (1+\frac{m \cdot C \cdot S(\Gamma)}{\rho^{n-1}}) + m \cdot C \cdot \rho
\]
where $S(\Gamma)$ denotes the surface area of $\Gamma$, which is bounded. Thus, the estimate
\[
\int_{U_i\cap\Gamma} \left| \frac{\partial E}{\partial\mathbf{n}_y}(x-y) \right| \, d\mathcal{H}^{n-1}(y)
\leq \frac{1}{n \alpha(n)} \int_{B_\rho(0')} \frac{K_0^i + |R_i|}{\omega_i(y')} \, dy' < \infty
\]
holds for any $U_i$ such that $d(x, U_i \cap \Gamma) < \rho$.
The proof of (i\hspace{-1pt}i) is now complete.
\end{enumerate}
\end{proof}
%
Based on Lemma \ref{EP}, we are able to prove Lemma \ref{NM} (i\hspace{-1pt}i).
\begin{proof}[Lemma \ref{NM} (i\hspace{-1pt}i)]
We decompose
\begin{multline*}
	\nabla d (x) \cdot \nabla \left(E*(\delta_\Gamma\otimes g) \right)(x)
	= \int_\Gamma \left(\nabla d(x)-\nabla d(y) \right)\cdot \nabla E(x-y) g(y) \, d\mathcal{H}^{n-1}(y) \\
	+ \int_\Gamma \frac{\partial E}{\partial\mathbf{n}_y}(x-y)g(y) \, d\mathcal{H}^{n-1}(y) = I_1 + I_2.
\end{multline*}
Let $x \in \Gamma_{\rho_0}^{\mathbf{R}^n}$ and $\pi x$ be the projection of $x$ on $\Gamma$. For $y \in U_{\rho_0}(\pi x)$, there exists a constant $L'$, independent of $x$ and $y$, such that
\[
	\left| \nabla d(x) - \nabla d(y) \right| \leq L' |x-y|.
\]
For $y \in \Gamma_{\rho_0}^{\mathbf{R}^n} \setminus U_{\rho_0}(\pi x)$, we have that $|x-y| \geq \frac{\rho_0}{2}$. Since $\overline{\Gamma_{\rho_0/2}^{\mathbf{R}^n}}$ is compact in $\mathbf{R}^n$, by considering a finite subcover of $\cup_{z \in \Gamma} U_{\rho_0}(z)$ we are able to show that there exists $M>0$ such that the estimate
\[
|\nabla d (x) - \nabla d (y)| \leq M |x-y|
\]
holds for any $x,y \in \Gamma_{\rho_0}^{\mathbf{R}^n}$.
Thus,
\[
	H(x,y) = \left( \nabla d(x) - \nabla d(y) \right) \cdot \nabla E(x-y)
\]
is estimated as
\[
	\left| H(x,y) \right| \leq \frac{M}{|x-y|^{n-2}}
\]
in $\Gamma_{\rho_0}^{\mathbf{R}^n} \times \Gamma_{\rho_0}^{\mathbf{R}^n}$.
We observe that
\begin{align*}
	\sup_{x \in \Gamma_{\rho_0}^{\mathbf{R}^n} \cap \Omega} \left| I_1(x) \right| &\leq \sup_{x \in \Gamma_{\rho_0}^{\mathbf{R}^n} \cap \Omega} \int_\Gamma H(x,y) \, d\mathcal{H}^{n-1}(y) \|g\|_{L^\infty(\Gamma)} \\
	& \leq M\sup_{x \in \Gamma_{\rho_0}^{\mathbf{R}^n} \cap \Omega} \int_\Gamma \frac{d\mathcal{H}^{n-1}(y)}{|x-y|^{n-2}} \|g\|_{L^\infty(\Gamma)}.
\end{align*}
Since
\[
	\sup_{x \in \Gamma_{\rho_0}^{\mathbf{R}^n} \cap \Omega} \left| I_2(x) \right| \leq \sup_{x \in \Gamma_{\rho_0}^{\mathbf{R}^n} \cap \Omega} \int_\Gamma \left| \frac{\partial E}{\partial\mathbf{n}_y}(x-y) \right| \, d\mathcal{H}^{n-1}(y) \|g\|_{L^\infty(\Gamma)},
\]
Lemma \ref{EP} (i\hspace{-1pt}i) now yields \eqref{TE}.
The proof is now complete.
\end{proof}
%
We wonder whether the tangential component of $\nabla E*(\delta_\Gamma\otimes g)$ satisfies the same estimate.
Unfortunately, the estimate
\[
	\left\| \nabla \left(E*(\delta_\Gamma\otimes g)\right) \right\|_{L^\infty(\Gamma_{\rho_0}^{\mathbf{R}^n} \cap \Omega)} \leq C \|g\|_{L^\infty(\Gamma)}
\]
should not hold even if $\Gamma$ is flat.
 Even weaker estimate
\[
	\left[ \nabla \left(E*(\delta_\Gamma\otimes g)\right) \right]_{b^\nu(\Gamma)} \leq C \|g\|_{L^\infty(\Gamma)}
\]
should not hold in general.

To illustrate the problem, we consider the case that $\Gamma$ is flat.
 We may assume $\Gamma=\partial\mathbf{R}^n_+$, $\mathbf{R}^n_+ = \{x_n>0 \}$.
\begin{lemma} \label{FP}
The estimate
\[
	\left\| \partial_{x_n} \left(E*(\delta_\Gamma\otimes g) \right) \right\|_{L^\infty(\mathbf{R}_+^n)} \leq \frac{1}{2}\|g\|_{L^\infty(\mathbf{R}^{n-1})}
\]
holds for $g \in L^\infty(\mathbf{R}^{n-1})$.
\end{lemma}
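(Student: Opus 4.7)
The plan is to compute $\partial_{x_n}\bigl(E*(\delta_\Gamma\otimes g)\bigr)$ explicitly and recognize the resulting kernel, up to a factor of $-1/2$, as the Poisson kernel for the upper half space. For $n\geq 2$ and $x_n>0$ a direct differentiation gives
\[
\partial_{x_n} E(x',x_n) = -\frac{x_n}{n\alpha(n)\,\bigl(|x'|^2 + x_n^2\bigr)^{n/2}}.
\]
Since this is integrable in $x'\in\mathbf{R}^{n-1}$ for every fixed $x_n>0$, one can bring $\partial_{x_n}$ inside the convolution to obtain
\[
\partial_{x_n}\bigl(E*(\delta_\Gamma\otimes g)\bigr)(x)
= -\int_{\mathbf{R}^{n-1}} \frac{x_n\,g(y')}{n\alpha(n)\,\bigl(|x'-y'|^2 + x_n^2\bigr)^{n/2}} \, dy'
\]
for $x=(x',x_n)\in\mathbf{R}^n_+$.

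The next step is to identify this kernel. Let
\[
P(x',x_n) := \frac{2\,x_n}{n\alpha(n)\,\bigl(|x'|^2 + x_n^2\bigr)^{n/2}}
\]
be the classical Poisson kernel for $\mathbf{R}^n_+$; it is nonnegative and satisfies the well-known normalization $\int_{\mathbf{R}^{n-1}} P(x',x_n)\,dx' = 1$ for every $x_n>0$. With this identification we can write
\[
\partial_{x_n}\bigl(E*(\delta_\Gamma\otimes g)\bigr)(x',x_n)
= -\tfrac{1}{2}\int_{\mathbf{R}^{n-1}} P(x'-y',x_n)\,g(y')\,dy'.
\]

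The conclusion then follows at once from Young's inequality (or a trivial pointwise bound): since $P(\,\cdot\,,x_n)\in L^1(\mathbf{R}^{n-1})$ with unit mass and $g\in L^\infty(\mathbf{R}^{n-1})$,
\[
\Bigl|\partial_{x_n}\bigl(E*(\delta_\Gamma\otimes g)\bigr)(x',x_n)\Bigr|
\leq \tfrac{1}{2}\,\|P(\,\cdot\,,x_n)\|_{L^1(\mathbf{R}^{n-1})}\,\|g\|_{L^\infty(\mathbf{R}^{n-1})}
= \tfrac{1}{2}\,\|g\|_{L^\infty(\mathbf{R}^{n-1})},
\]
uniformly in $x_n>0$, which is exactly the claim. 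There is no real obstacle here: the only point requiring any care is verifying the Poisson kernel normalization (equivalently, the value of the $(n-1)$-dimensional integral $\int |z'|^{2-n}$-type above), but this is a standard identity and can be cited without recomputation.
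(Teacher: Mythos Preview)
Your proof is correct and follows essentially the same route as the paper: identify $-\partial_{x_n}\bigl(E*(\delta_\Gamma\otimes g)\bigr)$ as one half of the Poisson integral and then use the nonnegativity and unit mass of the Poisson kernel to conclude. The paper states this identification without writing out the explicit computation of $\partial_{x_n}E$, and additionally mentions the maximum principle as an alternative justification, but otherwise the arguments coincide.
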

\begin{proof}
This is because $-\partial_{x_n} \left(E*(\delta_\Gamma\otimes g) \right)$ is the half of the Poisson integral, i.e.,
\[
	-\partial_{x_n} \left(E*(\delta_\Gamma\otimes g) \right)(x)
	= \frac{1}{2} \int_{\mathbf{R}^{n-1}} P_{x_n} (x'-y') g(y')dy',
\]
where $P_{x_n}$ denotes the Poisson kernel.
 Thus the desired $L^\infty$ estimate follows from the maximum principle of the Dirichlet problem for the Laplacian or from the property that $\int_{\mathbf{R}^{n-1}} P_{x_n} (x')dx'=1$ and $P_{x_n} \geq 0$.
\end{proof}
%
%
\begin{theorem} \label{CE}
There is a bounded sequence of smooth functions $\{g_\ell\}_{\ell \in \mathbf{N}} \subset L^\infty(\mathbf{R}^{n-1})$ such that
\[
	\lim_{\ell\to\infty} \left[ \partial_{x'} \left(E*(\delta_\Gamma\otimes g_\ell) \right) \right]_{b^\nu} = \infty
\]
for any $\nu>0$.
\end{theorem}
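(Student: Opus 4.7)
The plan is to first exhibit a fixed bounded (but non-smooth) compactly supported $g\in L^\infty(\mathbf{R}^{n-1})$ for which the associated potential $u=E*(\delta_\Gamma\otimes g)$ already satisfies $[\partial_{x'}u]_{b^\nu}=\infty$, and then promote this to a smooth bounded sequence via mollification combined with Fatou-type lower semicontinuity.

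The natural candidate is $g:=\chi_Q$ with $Q=[0,1]^{n-1}\subset\mathbf{R}^{n-1}$. For $n=2$ a direct integration yields
\[
  \partial_{x_1}u(x_1,x_2)=\tfrac{1}{4\pi}\log\frac{(x_1-1)^2+x_2^2}{x_1^2+x_2^2},
\]
so $\partial_{x_1}u(x)\sim-\tfrac{1}{2\pi}\log|x|$ as $|x|\to 0$ with $x_2>0$. For $n\ge 3$, integrating in $y_1$ first gives
\[
  \partial_{x_1}u(x)=\tfrac{1}{n(n-2)\alpha(n)}\int_{[0,1]^{n-2}}\bigl(|x-(0,y'',0)|^{2-n}-|x-(1,y'',0)|^{2-n}\bigr)\,dy'',
\]
and since the kernel $|y''|^{2-n}$ sits on the boundary of local integrability in $\mathbf{R}^{n-2}$, the first term produces a logarithmic blow-up at the corner $x=0$ while the second stays bounded; thus in every dimension $n\ge 2$ one has $|\partial_{x_1}u(x)|\gtrsim|\log|x||$ for small $|x|$ in $\mathbf{R}^n_+$. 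Polar coordinates then give
\[
  r^{-n}\int_{B_r(0)\cap\mathbf{R}^n_+}|\partial_{x_1}u(y)|\,dy\gtrsim|\log r|\longrightarrow\infty\quad\text{as }r\downarrow 0,
\]
so $[\partial_{x'}u]_{b^\nu}=\infty$ for every $\nu>0$.

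To smooth, I would fix a standard mollifier $\rho\in C_c^\infty(\mathbf{R}^{n-1})$ with $\int\rho=1$ and set $g_\ell:=g*\rho_{1/\ell}$, so that $g_\ell\in C_c^\infty(\mathbf{R}^{n-1})$, $\|g_\ell\|_\infty\le 1$, and the supports $\operatorname{supp}g_\ell$ lie in a common bounded set. For each fixed $x\in\mathbf{R}^n_+$ the kernel $y\mapsto\partial_{x_1}E(x-(y,0))$ is continuous and bounded on those supports, and $g_\ell\to g$ in $L^1$, so $\partial_{x_1}u_\ell(x)\to\partial_{x_1}u(x)$ pointwise in $\mathbf{R}^n_+$, where $u_\ell:=E*(\delta_\Gamma\otimes g_\ell)$. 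Fatou's lemma then gives, for every $x_0\in\Gamma$ and $r\in(0,\nu)$,
\[
  r^{-n}\int_{B_r(x_0)\cap\mathbf{R}^n_+}|\partial_{x_1}u|\,dy\le\liminf_{\ell\to\infty}r^{-n}\int_{B_r(x_0)\cap\mathbf{R}^n_+}|\partial_{x_1}u_\ell|\,dy\le\liminf_{\ell\to\infty}[\partial_{x'}u_\ell]_{b^\nu}.
\]
Taking the supremum over $(x_0,r)$ on the left yields $[\partial_{x'}u]_{b^\nu}\le\liminf_\ell[\partial_{x'}u_\ell]_{b^\nu}$, which is infinite by the first step, and after relabelling a subsequence one obtains the desired smooth sequence with $\lim_\ell[\partial_{x'}u_\ell]_{b^\nu}=\infty$. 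The main obstacle is the explicit logarithmic lower bound $|\partial_{x_1}u|\gtrsim|\log|\cdot||$ near the corner; once that singular behaviour is in hand, the mollification plus Fatou step is essentially automatic.
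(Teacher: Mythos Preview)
Your proof is correct and takes a genuinely different route from the paper's. The paper argues by contradiction and Fourier analysis: if the $b^\nu$-seminorm of $\partial_{x'}\bigl(E*(\delta_\Gamma\otimes g)\bigr)$ were uniformly controlled by $\|g\|_{L^\infty}$ for smooth $g$, then (since the single layer potential of a smooth density is smooth up to $\Gamma$) letting the averaging radius tend to zero would yield an $L^\infty(\Gamma)$ bound on the tangential derivative; writing the Neumann solution as $u=\Lambda^{-1}e^{-x_n\Lambda}g$ with $\Lambda=(-\Delta')^{1/2}$ and sending $x_n\downarrow 0$ then forces $\nabla'\Lambda^{-1}$, the Riesz transform, to be bounded on $L^\infty$, which is false.

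Your argument is instead constructive: you exhibit an explicit non-smooth $g=\chi_{[0,1]^{n-1}}$, compute $\partial_{x_1}u$ in closed form, identify a logarithmic corner singularity forcing $[\partial_{x_1}u]_{b^\nu}=\infty$, and then pass to smooth $g_\ell$ by mollification plus Fatou. This buys you an explicit blow-up sequence and avoids any appeal to Fourier multipliers or to the Riesz-transform pathology, at the cost of the elementary but slightly tedious verification of the logarithmic lower bound in dimensions $n\ge 3$ (which, as you note, only needs to hold on a sub-cone of $B_r(0)\cap\mathbf{R}^n_+$ of volume $\sim r^n$, e.g.\ where all tangential coordinates are positive). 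The paper's approach is shorter and conceptually ties the failure directly to the classical $L^\infty$-unboundedness of singular integrals; yours is more self-contained. Both are valid.
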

\begin{proof}
If $g$ is smooth, $E*(\delta_\Gamma\otimes g)$ is smooth up to the boundary.
 In this case, if $\left[\partial_{x'}\left(E*(\delta_\Gamma\otimes g)\right)\right]_{b^\nu}$ is bounded by $C\|g\|_{L^\infty(\mathbf{R}^{n-1})}$, $\left\| \partial_{x'} \left(E*(\delta_\Gamma\otimes g) \right) \right\|_{L^\infty(\Gamma)}$ is also bounded by $c_0 C\|g\|_{L^\infty(\mathbf{R}^{n-1})}$ with a constant $c_0$ depending only on $n$ since the mean value over $r$-ball around $x$ converges to its value at $x$ as $r\to 0$.

We consider the Neumann problem
\begin{align*}
	\Delta u = 0	&\quad\text{in}\quad \mathbf{R}^n_+, \\
	\frac{\partial u}{\partial\mathbf{n}} = g &\quad\text{on}\quad \Gamma = \partial\mathbf{R}^n_+.
\end{align*}
By using the tangential Fourier transform, we see that
\[
	u(x,t) = \Lambda^{-1} \exp(-x_n \Lambda) g
\]
where $\Lambda=(-\Delta')^{1/2}$.
 If $\|\nabla' u\|_{L^\infty(\Gamma)} \leq C\|g\|_{L^\infty(\mathbf{R}^{n-1})}$ were true, sending $x_n>0$ to zero would imply $L^\infty$ boundedness of the Riesz operator $\nabla' \Lambda^{-1}$, which is absurd.

The operator $E*(\delta_\Gamma\otimes g)$ is the half of the solution operator of the Neumann problem, so $L^\infty$ bound for $\nabla' E*(\delta_\Gamma\otimes g)$ should not hold even if it is restricted to smooth functions.
\end{proof}
\begin{corollary} \label{4UB}
Assume that $\Omega=\mathbf{R}^n_+$.
Let $v\mapsto\nabla q$ be the Helmholtz projection to a gradient field.
Then, this projection is unbounded from $\left(L^\infty(\Omega)\right)^n$ to $\left(BMO^{\mu,\nu}_b(\Omega)\right)^n$ for any $\mu,\nu>0$.
\end{corollary}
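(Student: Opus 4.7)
The plan is to turn Theorem \ref{CE} into a failure of Helmholtz boundedness by constructing a bounded sequence $\{v_\ell\} \subset (L^\infty(\mathbf{R}_+^n))^n$ whose gradient parts $\nabla q_\ell$ pick up the tangential derivative of $E\ast(\delta_\Gamma\otimes g_\ell)$ that already blows up in $b^\nu$. First I would fix the smooth bounded sequence $\{g_\ell\}\subset L^\infty(\mathbf{R}^{n-1})$ supplied by Theorem \ref{CE}, so that $[\partial_{x'}(E\ast(\delta_\Gamma\otimes g_\ell))]_{b^\nu}\to\infty$ for every $\nu>0$, and set
\[
v_\ell(x',x_n):=-g_\ell(x')\,e_n,\qquad (x',x_n)\in\mathbf{R}_+^n.
\]
Then $\|v_\ell\|_{L^\infty(\Omega)}=\|g_\ell\|_{L^\infty(\mathbf{R}^{n-1})}$ stays uniformly bounded, while a direct calculation gives $\operatorname{div} v_\ell\equiv 0$ in $\mathbf{R}_+^n$ and $v_\ell\cdot\mathbf{n}\big|_\Gamma=g_\ell$ since $\mathbf{n}=-e_n$ on $\Gamma$.

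Next I would set $q_\ell:=2E\ast(\delta_\Gamma\otimes g_\ell)$ and verify that $\nabla q_\ell$ is the gradient part of the Helmholtz decomposition of $v_\ell$. The identity from the proof of Lemma \ref{FP} yields
\[
-\partial_{x_n}q_\ell(x)=\int_{\mathbf{R}^{n-1}}P_{x_n}(x'-y')g_\ell(y')\,dy',
\]
which is the Poisson integral of $g_\ell$, hence harmonic in $\mathbf{R}_+^n$ with boundary value $g_\ell$. Consequently $\Delta q_\ell=0$ in $\mathbf{R}_+^n$ and $\partial q_\ell/\partial\mathbf{n}\big|_\Gamma=g_\ell=v_\ell\cdot\mathbf{n}\big|_\Gamma$. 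Setting $v_{0,\ell}:=v_\ell-\nabla q_\ell$ then gives $\operatorname{div} v_{0,\ell}=0$ in $\mathbf{R}_+^n$ and $v_{0,\ell}\cdot\mathbf{n}\big|_\Gamma=0$, so $v_\ell=v_{0,\ell}+\nabla q_\ell$ is a legitimate Helmholtz decomposition of $v_\ell$.

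Finally I would close with the lower bound
\[
[\nabla q_\ell]_{b^\nu}\ \geq\ [\partial_{x_1}q_\ell]_{b^\nu}\ =\ 2\,[\partial_{x_1}E\ast(\delta_\Gamma\otimes g_\ell)]_{b^\nu}\ \longrightarrow\ \infty
\]
supplied by Theorem \ref{CE}. Thus $\|\nabla q_\ell\|_{BMO^{\mu,\nu}_b(\Omega)}\to\infty$ whereas $\|v_\ell\|_{L^\infty(\Omega)}$ stays bounded, contradicting any uniform bound for the projection $v\mapsto\nabla q$ from $(L^\infty(\Omega))^n$ into $(BMO^{\mu,\nu}_b(\Omega))^n$. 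The step I expect to be the most delicate is confirming that the specific $\nabla q_\ell$ above really must be the image of $v_\ell$ under the Helmholtz projection; this amounts to a Liouville-type rigidity claim on the half space, namely that a harmonic function in $\mathbf{R}_+^n$ whose Neumann trace vanishes and whose gradient belongs to $BMO^{\mu,\nu}_b$ must be constant. This rigidity is what makes the gradient part of the decomposition unique in our setting and so pins down $\nabla q_\ell$ as the unavoidable output of the projection.
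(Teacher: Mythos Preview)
Your proposal is correct and follows essentially the same approach as the paper: both take $v=(0,\ldots,0,v_n(x'))$ with $v_n$ depending only on $x'$, observe that $\operatorname{div}v=0$ so the Helmholtz gradient $\nabla q$ must solve the Neumann problem with data $v\cdot\mathbf{n}$, and then invoke Theorem~\ref{CE} to conclude that $[\nabla q]_{b^\nu}$ cannot be controlled by $\|v\|_{L^\infty}$. Your version is in fact more explicit than the paper's---you identify $q_\ell=2E\ast(\delta_\Gamma\otimes g_\ell)$ and verify directly that it gives a Helmholtz splitting, and you flag the uniqueness (Liouville-type) issue that the paper leaves implicit---but the underlying idea is the same.
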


\begin{proof}
We consider
\[
	v = \left(0, \ldots, 0, v_n(x')\right)
\]
with $v_n \in L^\infty(\mathbf{R}^{n-1})$.
 This evidently solves $\operatorname{div}v=0$.
 The normal trace equals $-v_n(x')$.
 If
\[
	[\nabla q]_{b^\nu} \leq C \|v_n\|_{L^\infty(\mathbf{R}^{n-1})}
\]
for all $v_n \in L^\infty(\mathbf{R}^{n-1})$ with $C$ independent of $v$, then this would contradict Theorem \ref{CE}.
\end{proof}
\section*{Acknowledgement}
The first author was partly supported by the Japan Society for the Promotion of Science through grants No. 19H00639 (Kiban A), No. 18H05323 (Kaitaku), No. 17H01091 (Kiban A). The second author is partly supported by the Mizuho international foundation through foreign students scholarship program.
%

%
%



\end{document}